\title{The category of finite strings}
\author{Henning Krause}
\address{Fakult\"at f\"ur Mathematik\\
Universit\"at Bielefeld\\ D-33501 Bielefeld\\ Germany}
\email{hkrause@math.uni-bielefeld.de}
\theoremstyle{plain}
\newtheorem{thm}{Theorem}[section]
\newtheorem{prop}[thm]{Proposition}
\newtheorem{lem}[thm]{Lemma} 
\newtheorem{cor}[thm]{Corollary}
\theoremstyle{definition}
\newtheorem{exm}[thm]{Example}
\theoremstyle{remark}
\newtheorem{rem}[thm]{Remark}
\numberwithin{equation}{section}
\newcommand{\add}{\operatorname{add}}
\newcommand{\card}{\operatorname{card}}
\newcommand{\End}{\operatorname{End}}
\newcommand{\Ext}{\operatorname{Ext}}
\newcommand{\Hom}{\operatorname{Hom}}
\newcommand{\HOM}{\operatorname{\mathcal{H}\!\!\;\mathit{om}}}
\newcommand{\id}{\operatorname{id}}
\renewcommand{\Im}{\operatorname{Im}}
\newcommand{\Ker}{\operatorname{Ker}}
\renewcommand{\mod}{\operatorname{mod}}
\newcommand{\NC}{\operatorname{NC}}
\newcommand{\Rep}{\operatorname{Rep}}
\newcommand{\Thick}{\operatorname{Thick}}
\newcommand{\op}{\mathrm{op}}
\newcommand{\Set}{\mathrm{Set}}
\newcommand{\iso}{\xrightarrow{\raisebox{-.4ex}[0ex][0ex]{$\scriptstyle{\sim}$}}}
\newcommand{\leftiso}{\xleftarrow{\raisebox{-.4ex}[0ex][0ex]{$\scriptstyle{\sim}$}}}
\newcommand{\longiso}{\xrightarrow{\ \raisebox{-.4ex}[0ex][0ex]{$\scriptstyle{\sim}$}\ }}
\newcommand{\lto}{\longrightarrow}
\newcommand{\xto}{\xrightarrow}
\newcommand*{\intref}[2]{\def\tmp{#1}\ifx\tmp\empty\hyperref[#2]{\ref*{#2}}\else\hyperref[#2]{#1~\ref*{#2}}\fi}
\def\A{\mathcal A} 
\def\B{\mathcal B} 
\def\C{\mathcal C}
\def\bfD{\mathbf D}
\def\bbN{\mathbb N}
\def\bbZ{\mathbb Z}
\def\a{\alpha}
\def\b{\beta}
\def\e{\varepsilon}
\def\d{\delta}
\def\p{\phi}
\def\s{\sigma}
\def\t{\tau}
\def\De{\Delta}
\def\Si{\Sigma}
\begin{document}

\keywords{String, simplex category, quiver representation,
non-crossing partition, thick subcategory}

\subjclass[2020]{05E10 (primary), 16G20, 18N50 (secondary)}

\begin{abstract}
 We introduce the category of finite strings and study its basic
 properties. The category is closely related to the augmented simplex
 category, and it models categories of linear representations. Each lattice of
 non-crossing partitions arises naturally  as a lattice of subobjects.
\end{abstract}
  
\date{13 September, 2021}

\maketitle

\section{Introduction}

Strings are considered to be one of the most basic combinatorial
structures arising in representation theory of associative
algebras. In fact, many of the interactions with neighbouring fields
involve strings and their corresponding representations, which are
also known as string modules.

In this note we introduce a category of finite strings and establish
some connections. First of all, we notice that the category of
connected strings is equivalent to the augmented simplex category
$\De$ (cf.\ \cite{GZ1967,ML1998}), once the initial and terminal
objects in $\De$ are identified. Then we show that the category of
finite strings models categories of linear representations. More
precisely, we provide an equivalence between finite strings and
certain abelian categories (hereditary and uniserial length categories
with only finitely many simple objects and split over a fixed field,
cf.\ \cite{AR1968}), where morphisms between strings correspond to
certain exact functors. In this context it is appropriate to include
cyclic strings which correspond to abelian categories of infinite
representation type. This is somewhat parallel to the cyclic category
of Connes and others \cite{Ca1985,Co1983}; however we add new objects
(cyclic strings) while the cyclic category keeps the objects of $\De$
and only morphisms are added.

Any morphism in the category of finite strings admits an epi-mono
factorisation. Thus it is of interest to study the subobjects of a
given object, at least for any connected string. We show for a linear
string of length $n$ that the lattice of subobjects is isomorphic to
the lattice $\NC(n+1)$ of non-crossing partitions \cite{Kr1972}, while
the lattice $\NC^B(n)$ of type $B$ non-crossing partitions 
\cite{Re1997} arises for a cyclic string of length $n$.

The correspondence between strings and categories of linear
representations identifies subobjects of strings with thick
subcategories of abelian categories. In this way we recover the
beautiful classification of thick subcategories for quiver
representations of type $A$ due to Ingalls and Thomas \cite{IT2009},
and we add a classification for nilpotent representations of cyclic
quivers which seems to be new. The cyclic case can be
used to complete the classification of all thick subcategories for
representations of any tame hereditary algebra, including the ones
that are not generated by exceptional sequences, and therefore
complementing the work in \cite{HK2016,IS2010,IT2009}. We
point out that the category of strings can be extended to include all
Dynkin types, beyond the type $A$ in this work, and analogous to the
categorification of non-crossing partitions for all Dynkin types in
\cite{HK2016}.

Finally, let us mention the connection with some recent work which is
concerned with Iyama's higher Auslander algebras of type $A$
\cite{I2011}. These algebras form a natural generalisation of the
hereditary algebras of type $A$ arising in the present work. In
\cite{DJW2019} the authors point out the simplicial structure of the
representations for these higher Auslander algebras, using some
advanced categorical formalism. Wide subcategories of representations
generalise thick subcategories and these are studied for type $A$
higher Auslander algebras in \cite{HJ2021}.

\subsubsection*{Acknowledgements} It is a pleasure to thank Marc
Stephan and Dieter Vossieck for several useful comments on this
work. In addition I wish to thank Christian Stump for pointing me to
the non-crossing partitions of type $B$.

\section{Connected strings}

In this work we introduce the category of finite strings. For any
natural number $n$ the connected string of length $n$ is
denoted by $\Si_n$. Each string comes equipped with its set of
(connected) substrings, together with a multiplication on the set of
substrings given by concatenation. The objects of the category are
finite coproducts of connected strings, and the morphisms are maps
that preserve substrings and their multiplication.

A \emph{basic string} is a pair $s=(s',s'')$ of integers $s'\le s''$.
We write $\ell(s)= s''-s'+1$ for the \emph{length} of $s$ and add a
\emph{zero string} $*$ satisfying $\ell(*)=0$. Strings of length one
are called \emph{simple} and we set $s_i:=(i,i)$, $i\in\bbZ$.  For a
string $s=(i,j)$ we call the simple strings $s_{i},s_{i+1},\ldots,s_{j}$ its
\emph{composition factors}.

A multiplication of basic strings is given by concatenation.  For
$s,t$ set
\[st:=\begin{cases}
    (s',t'')&\text{for } s=(s',s''),\, t=(t',t''),\, s''+1=t',\\
    t&\text{for } s=*,\\  s&\text{for } t=*,\\
    * &\text{otherwise}.
  \end{cases}\] This multiplication is not associative. For instance,
we have
\[(s_0 s_1)s_0=(0,1)s_0=*\qquad\text{and}\qquad s_0(s_1
  s_0)=s_0*=s_0.\]

For $n\in\bbN=\{0,1,2,\ldots\}$ the \emph{connected string} of length
$n$ is by definition the set of basic strings
\[\Si_n:=\{s=(s',s'')\mid 0\le s'\le s''< n\}\cup\{*\}.\]
A morphism $\p\colon\Si_m\to\Si_n$ is by definition a map such that
for all $s,t\in\Si_m$
\begin{equation}\label{eq:mor}
  \p(st)=\p(s)\p(t)\qquad\text{and}\qquad
  st\neq *=\p(st)\implies \p(s)=*=\p(t).
\end{equation}
The string $*$ plays the
role of a base point. In fact, a morphism is base point preserving
since \[\p(*)=\p(**)=\p(*)\p(*)=*.\] Any morphism is determined by the
images of the simple strings but it need not preserve the length of
basic strings.
 
We define \emph{standard morphisms} as follows. Let $n\ge 1$. The morphism
  \[\d^i_n\colon \Si_{n-1}\lto\Si_n \qquad (0\le i\le n)\] 
  is given by the unique injective map such that $s_{i-1}$ and $s_{i}$
  are not in its image. Note that $\d_1^0=\d_1^1$. The
  morphism \[\s^i_n\colon \Si_{n}\lto\Si_{n-1} \qquad (0\le i <n)\] is
  given by the unique surjective map such that $s_i$ is sent to the
  zero string.

  The standard morphisms are analogues of the face and degeneracy maps
  for simplices. In fact, they satisfy the following \emph{simplicial identities} \cite[VII.5]{ML1998}.

\begin{lem}\label{le:rel}
The standard morphisms satisfy the following identities:
\begin{equation}\label{eq:rel}
\begin{aligned}
  \d^i_{n+1}\circ\d^j_n&=\mathrlap{\d^{j+1}_{n+1}\circ\d^i_n}
 \hphantom{\begin{cases}\d^i_{n}\circ\s^{j-1}_{n}&\\ 
    \id &\\
    \d^{i-1}_{n}\circ\s^{j}_{n}&\end{cases}}\kern-\nulldelimiterspace
  i\le j\\
  \s^j_{n}\circ\s^i_{n+1}&= \mathrlap{\s^i_{n}\circ\s^{j+1}_{n+1}}
  \hphantom{\begin{cases}\d^i_{n}\circ\s^{j-1}_{n}&\\ 
    \id &\\
    \d^{i-1}_{n}\circ\s^{j}_{n}&\end{cases}}\kern-\nulldelimiterspace
                           i\le j\\
  \s^j_{n}\circ\d^i_{n}&=  \begin{cases}\d^i_{n}\circ\s^{j-1}_{n}& i<j\\ 
    \id &i=j\text{ or } i=j+1\\
    \d^{i-1}_{n}\circ\s^{j}_{n}& i>j+1\
\end{cases}
\end{aligned}
\end{equation}
\end{lem}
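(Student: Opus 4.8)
The plan is to reduce every identity to a comparison on simple strings. Both sides of each equation are composites of standard morphisms, hence again morphisms, so by the observation recorded just above the lemma---that any morphism is determined by the images of the simple strings---it suffices to check that the two composites agree on each simple string $s_k$ of the source. The value on an arbitrary basic string $(a,b)=s_a s_{a+1}\cdots s_b$ is then forced by multiplicativity $\p(st)=\p(s)\p(t)$, and nothing further needs to be verified there.

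First I would make the standard morphisms explicit on simple strings. Unwinding the two defining properties one obtains
\[
  \d^i_n(s_k)=\begin{cases} s_k & k\le i-2,\\ s_{i-1}s_i & k=i-1,\\ s_{k+1}& k\ge i,\end{cases}
  \qquad
  \s^i_n(s_k)=\begin{cases} s_k & k<i,\\ * & k=i,\\ s_{k-1}& k>i.\end{cases}
\]
The only non-obvious value is $\d^i_n(s_{i-1})=s_{i-1}s_i=(i-1,i)$: requiring an injective morphism whose image omits both $s_{i-1}$ and $s_i$ forces the two adjacent units around position $i$ to be merged into a single length-two string, which is exactly what is compatible with concatenation. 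I would justify these formulas once, by checking injectivity, respectively surjectivity, together with the morphism axioms \eqref{eq:mor}.

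With the formulas in hand each of the three families reduces to a short case analysis on the position of $k$ relative to $i$ and $j$, with the subscripts determined by composability. For the identities among the $\d$'s and among the $\s$'s this is a direct index chase, where the merging value of $\d$ and the index shifts of $\s$ match up on the nose. The genuinely delicate family is the mixed one $\s\circ\d$, where one must track the cases in which a composition factor is sent to the zero string. For instance, in $\s^j_n\circ\d^j_n=\id$ the single interesting point is $s_{j-1}$, where $\d^j_n(s_{j-1})=s_{j-1}s_i$ and then $\s^j_n(s_{j-1}s_j)=\s^j_n(s_{j-1})\,\s^j_n(s_j)=s_{j-1}\cdot*=s_{j-1}$; it is precisely multiplicativity together with $\s^j_n(s_j)=*$ that recovers the identity.

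The main obstacle is therefore not any single hard step but the bookkeeping: pinning down the merging behaviour of $\d^i_n$, and then, in the mixed identities, correctly handling the boundary cases where a unit collapses to $*$ and the surviving indices shift. A cleaner, essentially equivalent route, which I would at least mention, is to note that each $\Si_n$ carries an underlying ordered set $\{0,1,\ldots,n\}$ of endpoints of its unit substrings, that $\d^i_n$ and $\s^i_n$ are induced by the coface and codegeneracy maps of these ordered sets, and that the assignment $f\mapsto\Si_f$ respects composition; the three families then descend from the classical simplicial identities in $\De$ \cite[VII.5]{ML1998}, the only remaining point being the functoriality of $f\mapsto\Si_f$, whose verification again concentrates in the case where an interval degenerates to $*$.
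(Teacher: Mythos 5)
Your proposal is correct and is essentially the paper's own proof: the paper likewise records the values of $\d^i_n$ and $\s^i_n$ on the simple strings and then appeals to the fact that any morphism is determined by those values, the remaining verification being a routine index chase. One remark worth keeping: your value $\d^i_n(s_{i-1})=s_{i-1}s_i=(i-1,i)$ is the correct one, whereas the paper's displayed formula \eqref{eq:standard} reads $(j-1,j)$ at $j=i-1$, i.e.\ $(i-2,i-1)$, which is evidently a typo for $(j,j+1)$ --- with that value multiplicativity would force $\d^i_n(s_{i-2}s_{i-1})=s_{i-2}\cdot(i-2,i-1)=*$, contradicting \eqref{eq:mor}, and the identity $\s^i_n\circ\d^i_n=\id$ would fail at $s_{i-1}$.
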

\begin{proof}
  This is easily checked, for instance by tracing the images of the
  simple strings. We have
  \begin{equation}\label{eq:standard}
    \d_n^i(s_j)=\begin{cases} s_j&j<i-1\\
      (j-1,j)&j=i-1\\
      s_{j+1}&j>i-1\end{cases}\qquad \text{and}\qquad
   \s_n^i(s_j)=\begin{cases} s_j&j<i\\
      *&j=i\\
      s_{j-1}&j>i.\end{cases}
  \end{equation}
  It remains to note that any morphism $\p$ is determined by the
  images $\p(s_j)$.
\end{proof}

We denote by $\Si$ the \emph{category of connected strings} with
objects given by the strings $\Si_n$, $n\in\bbN$.

Any morphism can be written in some canonical form. First observe that
there is a canonical epi-mono factorisation.

\begin{lem}\label{le:epi-mono}
  Let $\p\colon\Si_m\to\Si_n$ be a morphisms. Let
  $0\le j_v<\cdots < j_0< m$ be the indices $j$ such that
  $\p(s_{j})=*$ and set $\p'=\s_{m-v}^{j_v}\circ\cdots
  \circ\s_{m-1}^{j_1} \circ\s_{m}^{j_0}$. Then there is a
  factorisation $\p=\p''\circ\p'$ such that $\p''$ is injective.
\end{lem}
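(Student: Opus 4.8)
The plan is to use that a morphism is determined by its values on the simple strings, reducing everything to bookkeeping of which simples are sent to $*$. Write $D=\{j_0,\dots,j_v\}$ for the set of indices $j$ with $\p(s_j)=*$. From the formula for $\s^i_n(s_j)$ in \eqref{eq:standard} one reads off that $\p'$ sends $s_j$ to $*$ precisely when $j\in D$, and otherwise sends $s_j$ to $s_{\rho(j)}$, where $\rho(j)=j-\lvert\{l : j_l<j\}\rvert$. Since the $\s$'s are composed from the largest index $j_0$ down to the smallest $j_v$, each factor removes exactly one original simple (the lower labels being unaffected at each stage), and $\rho$ is the induced order-preserving bijection from the non-killed indices onto $\{0,1,\dots,m-v-2\}$. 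In particular $\p'$ is surjective with target $\Si_{m-v-1}$, and $\p$ and $\p'$ kill exactly the same simples.

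First I would define $\p''$ on simples by $\p''(s_k):=\p(s_{\rho^{-1}(k)})$, the $\p$-image of the unique non-killed simple lying over $s_k$; by the choice of $D$ each $\p''(s_k)\neq *$. The one substantial point is to check that the images of consecutive simples concatenate, i.e.\ $\p''(s_k)\p''(s_{k+1})\neq *$. For this let $j<j'$ be the consecutive non-killed indices with $\rho(j)=k$ and $\rho(j')=k+1$; every index strictly between them lies in $D$, so splitting the string $(j,j')$ and using $\p(s_i)=*$ for $i\in D$ gives $\p(j,j')=\p(s_j)\p(s_{j'})=\p''(s_k)\p''(s_{k+1})$. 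Since $(j,j')\neq *$, the implication in \eqref{eq:mor} forces $\p(j,j')\neq *$ (otherwise $\p(s_j)=*$), so the two images concatenate. Consequently $\p''(s_0),\p''(s_1),\dots$ are consecutive intervals with strictly increasing endpoints, and setting $\p''(c,d):=\p''(s_c)\cdots\p''(s_d)$ and $\p''(*):=*$ is unambiguous.

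It then remains to verify that $\p''$ is an injective morphism and that $\p=\p''\circ\p'$. Writing $\p''(s_k)=(\alpha_k,\beta_k)$ with $\beta_k+1=\alpha_{k+1}$, one has $\p''(c,d)=(\alpha_c,\beta_d)$, and the two conditions in \eqref{eq:mor} reduce to the elementary observation that $(\alpha_c,\beta_d)$ and $(\alpha_{c'},\beta_{d'})$ concatenate exactly when $d+1=c'$; injectivity follows because both $(\alpha_k)_k$ and $(\beta_k)_k$ are strictly increasing, so distinct basic strings have distinct images while only $*$ maps to $*$. Finally $\p''\circ\p'$ and $\p$ are morphisms that agree on every simple string—both send $s_j$ with $j\in D$ to $*$, and for $j\notin D$ one has $\p''(\p'(s_j))=\p''(s_{\rho(j)})=\p(s_j)$—hence they coincide.

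The main obstacle is that concatenation is not associative, so products of several simples cannot be rearranged freely; the key that circumvents this is the concatenation of consecutive images established in the second step, which both makes the formula for $\p''$ well defined and simultaneously delivers the morphism property and the injectivity. The only other thing requiring care is the bookkeeping of the relabeling $\rho$ induced by the iterated degeneracies, together with the degenerate case $m-v-1=0$, where $\Si_{m-v-1}=\{*\}$ and $\p''$ is the trivial map.
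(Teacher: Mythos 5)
Your proof is correct, and it reaches the factorisation by a different route than the paper. The paper argues top-down: since $\p(s_i)=*$ implies $\p(s_is)=\p(s)=\p(ss_i)$, the map $\p$ is constant on the fibres of the surjection $\p'$, hence descends to a map $\p''$ with $\p=\p''\circ\p'$, and injectivity of $\p''$ is then obtained by induction on the length of two strings with equal image. You instead build $\p''$ bottom-up: you transport the values of $\p$ on the surviving simples along the relabelling bijection $\rho$, prove that images of consecutive surviving simples concatenate --- this is the step where the second clause of \eqref{eq:mor} is genuinely needed, applied to the string $(j,j')$ whose interior simples all lie in $D$ --- and then read off both the morphism property of $\p''$ and its injectivity from the strict monotonicity of the endpoints $\alpha_k,\beta_k$. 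What your version buys is completeness: the paper's two-sentence proof leaves implicit both that the fibres of $\p'$ are generated by the absorption moves $s_is\sim s\sim ss_i$ and that the descended map actually satisfies \eqref{eq:mor}, whereas your concatenation step handles exactly these points and makes visible that without the second clause of \eqref{eq:mor} the lemma would fail (a multiplicative map may send a nonzero string to $*$ while keeping its composition factors nonzero, and then no injective $\p''$ can exist). The cost is the bookkeeping with $\rho$ and the endpoint sequences, which the paper's quotient-style argument avoids.
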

\begin{proof}
  When $\p(s_i)=*$ for some simple string $s_i$, then
  $\p(s_is)=\p(s)=\p(ss_i)$ for all $s\in\Si_m$. This yields the
  factorisation $\p=\p''\circ\p'$.  Given strings $s,t$ such that
  $\p''(s)=\p''(t)$, an induction on their length shows that $s=t$.
\end{proof}
  
\begin{lem}\label{le:decomp}
  Every morphism $\p\colon\Si_m\to\Si_n$ can be written uniquely as
  composite
  \begin{equation}\label{eq:decomp}
    \p=\d_n^{i_u}\circ\d_{n-1}^{i_{u-1}}\circ\cdots \circ
    \d_{n-u}^{i_{0}}\circ\s_{m-v}^{j_v}\circ\cdots \circ\s_{m-1}^{j_1} \circ\s_{m}^{j_0}
  \end{equation}
  with $0\le i_0<\cdots< i_u\le n$, $0\le j_v<\cdots< j_0< m$, and
  $n-u=m-v$.\footnote{We need to exclude $\d_1^0$ as a factor and
    choose instead $\d_1^1$ in order to achieve uniqueness.}
\end{lem}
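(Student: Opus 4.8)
The plan is to produce the normal form in two stages, paralleling the classical argument in the simplex category but working directly with the explicit formulas \eqref{eq:standard}, the simplicial identities \eqref{eq:rel}, and the epi-mono factorisation of Lemma~\ref{le:epi-mono}, rather than through an equivalence with $\De$ (which has not yet been established). First I would apply Lemma~\ref{le:epi-mono} to write $\p=\p''\circ\p'$, where $\p'=\s_{m-v}^{j_v}\circ\cdots\circ\s_{m}^{j_0}$ is already in the required codegeneracy form --- the indices $\{j_0,\dots,j_v\}$ being exactly those $j$ with $\p(s_j)=*$ --- and $\p''$ is injective. Since the middle object is forced to be $\Si_{m-v-1}=\Si_{n-u-1}$, this reduces the existence half of the statement to the assertion that every injective morphism $\psi\colon\Si_q\to\Si_n$ can be written as $\d_n^{i_u}\circ\cdots\circ\d_{q+1}^{i_0}$ with $0\le i_0<\cdots<i_u\le n$ and $u+1=n-q$.

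For the injective part I would induct on $n-q$. When $n=q$ a counting argument shows $\psi$ is bijective, and since concatenation encodes a directed linear order on the simple strings that $\psi$ must preserve, $\psi=\id$ is the empty composite. When $n>q$ the morphism $\psi$ fails to be surjective, and I would let $i_u$ be the largest index for which $\psi$ factors through $\d^{i_u}_n$ --- informally, the largest vertex ``skipped'' by $\psi$, read off from the simple strings missing from its image (choosing $\d_1^1$ over $\d_1^0$ in the boundary case of the footnote). One then obtains $\psi=\d^{i_u}_n\circ\psi'$ with $\psi'\colon\Si_q\to\Si_{n-1}$ injective; maximality of $i_u$ guarantees that the indices produced for $\psi'$ by induction all lie strictly below $i_u$, yielding the strictly increasing sequence. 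As a morphism is determined by its values on the simple strings, verifying each such factorisation reduces to a comparison on the $s_j$ against \eqref{eq:standard}.

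For uniqueness I would recover the data of \eqref{eq:decomp} intrinsically from $\p$. The crucial point is that an injective morphism carries no nonzero string to $*$: if $\psi(s)=*$ with $s\ne *$, then $\psi(s)=\psi(*)$ contradicts injectivity. Hence the coface factor preserves non-vanishing, so $\p(s_j)=*$ holds precisely when the codegeneracy factor sends $s_j$ to $*$; combined with the fact that $\s^i_n(s_j)=*$ exactly for $j=i$ and with $j_v<\cdots<j_0$, this identifies $\{j_0,\dots,j_v\}$ as the intrinsic set $\{j\mid\p(s_j)=*\}$. Thus $\p'$ is determined, and since $\p'$ is an epimorphism the injective factor $\p''$ is determined as well. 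Finally the coface indices of $\p''$ are recovered from its image, the strictly increasing convention removing the remaining ambiguity.

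The step I expect to be the main obstacle is the inductive factorisation of the injective part: isolating the correct invariant of an injective morphism --- which vertex to skip, equivalently which coface to split off --- and checking that this is compatible with the increasing-index convention. This is precisely where the first simplicial identity $\d^i_{n+1}\circ\d^j_n=\d^{j+1}_{n+1}\circ\d^i_n$ (for $i\le j$) of \eqref{eq:rel} is needed to sort cofaces into increasing order, and where the degenerate coincidence $\d_1^0=\d_1^1$ must be handled by hand to avoid producing a spurious second normal form.
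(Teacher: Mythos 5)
Your proposal is correct and takes essentially the same approach as the paper: the paper's proof likewise characterises the indices intrinsically --- the $j$'s as those $j$ with $\p(s_j)=*$, the $i$'s as those avoided by the endpoints $s'$ and $s''+1$ of strings in the image of $\p$ --- and reads off both existence and uniqueness from that characterisation. Your epi-mono factorisation plus induction on the injective factor merely fills in the details the paper leaves as ``clear'', and your handling of the coincidence $\d_1^0=\d_1^1$ matches the paper's footnote convention.
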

We call  \eqref{eq:decomp} the \emph{canonical decomposition} of $\p$
in $\Si$.
\begin{proof}
  Let $0\le j_v<\cdots< j_0< m$ be the indices $j$ such that
  $\p(s_{j})=*$. And let $0\le i_0<\cdots< i_u\le n$ be the
  indices such that for all $(s',s'')\in\Si_n$ in the image of $\p$ we
  have $s',s''+1\not\in\{i_0,\ldots,i_u\}$. Then it is clear that $\p$
  satisfies \eqref{eq:decomp}. Conversely, if $\p$ is written as in
  \eqref{eq:decomp}, then the indices $i_0,\cdots, i_u$ and
  $j_0,\cdots, j_v$ are characterised as above.
\end{proof}

\begin{lem}\label{le:cat-gen}
  The category $\Si$ is generated by the objects $\Si_n$, the
  morphisms $\d_n^i$, $\s_m^j$, plus the relations $\d_1^0=\d_1^1$ and
  \eqref{eq:rel}.
\end{lem}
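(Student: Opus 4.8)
The plan is to identify $\Si$ with the category $\wt\Si$ presented by the stated data. Concretely, let $\wt\Si$ be the category whose objects are the symbols $\Si_n$ ($n\in\bbN$) and whose morphisms are freely generated by arrows $\d^i_n$ and $\s^j_m$ modulo the congruence generated by $\d_1^0=\d_1^1$ and the identities \eqref{eq:rel}. By Lemma~\ref{le:rel} these identities hold among the standard morphisms of $\Si$, and $\d_1^0=\d_1^1$ holds by their very definition; hence there is a unique functor $P\colon\wt\Si\to\Si$ that is the identity on objects and carries each generator to the like-named standard morphism. The lemma amounts to the assertion that $P$ is an isomorphism, so I would prove that $P$ is bijective on objects (clear) and bijective on each hom-set.

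Fullness of $P$ is immediate from Lemma~\ref{le:decomp}: every morphism of $\Si$ is a composite of standard morphisms, hence lies in the image of $P$. For faithfulness I would first establish that every morphism of $\wt\Si$ can be rewritten, using only the defining relations, into a word of the shape \eqref{eq:decomp} --- a block of faces followed by a block of degeneracies with the prescribed monotone indices. Granting this normal form, faithfulness follows formally: given $\a,\b\in\wt\Si$ with $P(\a)=P(\b)$, reduce both to normal form; their images are then two canonical decompositions of one and the same morphism of $\Si$, so by the uniqueness clause of Lemma~\ref{le:decomp} their index data agree, the two normal-form words coincide, and therefore $\a=\b$ already in $\wt\Si$.

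The normal form is obtained by the usual rewriting procedure. First I would move every degeneracy to the right of every face: each factor $\s^j\circ\d^i$ is replaced, according to the three cases of the mixed relation in \eqref{eq:rel}, either by the identity (when $i=j$ or $i=j+1$) or by a factor $\d\circ\s$ with shifted indices. A move of the first kind strictly lowers the number of generators, and a move of the second kind strictly lowers the number of pairs in which a degeneracy precedes a face, so with respect to the lexicographic measure (number of generators, number of such inversions) the process terminates in a word $\d\cdots\d\,\s\cdots\s$. Then I would sort the face block by repeated application of the first relation $\d^i\circ\d^j=\d^{j+1}\circ\d^i$ ($i\le j$) and the degeneracy block by the second relation, each move being a length-preserving transposition; a bubble-sort argument terminates and yields the strictly monotone index sequences demanded in \eqref{eq:decomp}, after replacing any lingering factor $\d_1^0$ by $\d_1^1$.

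The one delicate point --- and the main obstacle --- is the bookkeeping of the index shifts: at each commutation one must check that the relation applies with the correct indices and that the terminal word has exactly the monotonicity required by \eqref{eq:decomp}, rather than merely \emph{some} separated form. This is precisely the computation underlying the standard presentation of the simplex category \cite[VII.5]{ML1998}, and I would adapt it almost verbatim, the only new feature being the identification $\d_1^0=\d_1^1$ recorded in the footnote to Lemma~\ref{le:decomp}, which is exactly what removes the remaining ambiguity in the normal form.
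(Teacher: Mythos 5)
Your proposal is correct and follows essentially the same route as the paper: both compare the presented category (your $\wt\Si$, the paper's $\Si'$) with $\Si$ via the canonical functor, obtain fullness from Lemma~\ref{le:decomp}, and obtain faithfulness by rewriting formal words into the normal form \eqref{eq:decomp} inside the presented category and then invoking the uniqueness clause of Lemma~\ref{le:decomp}. The only difference is one of detail: the paper compresses the rewriting step into the single sentence that the relations in $\Si'$ yield decompositions \eqref{eq:decomp}, whereas you spell out the termination argument (lexicographic measure, bubble sort, and the final substitution $\d_1^0\mapsto\d_1^1$), which is exactly the content the paper implicitly defers to the standard simplex-category computation in \cite[VII.5]{ML1998}.
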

\begin{proof}
  Let us denote by $\Si'$ the category generated by the objects
  $\Si_n$, the morphisms $\d_n^i$, $\s_m^j$, plus the relations
  $\d_1^0=\d_1^1$ and \eqref{eq:rel}. Since the relations are
  satisfied in $\Si$, there is a unique functor $\Si'\to\Si$ which
  induces the identity on the objects and on the morphisms $\d_n^i$
  and $\s_m^j$. Since every morphism in $\Si$ is a composite of
  morphisms $\d_n^i$ and $\s_m^j$, the induced map
  $\Hom_{\Si'}(\Si_m,\Si_n)\to \Hom_{\Si}(\Si_m,\Si_n)$ is surjective
  for all $m,n$. To show injectivity, choose $\p,\psi$ in
  $\Hom_{\Si'}(\Si_m,\Si_n)$ with same image in $\Si$. Since the
  relations  $\d_1^0=\d_1^1$ and \eqref{eq:rel} in $\Si'$ are satisfied, there are
  decompositions \eqref{eq:decomp} in $\Si'$ for $\p$ and
  $\psi$. These decompositions coincide in $\Si$ by
  Lemma~\ref{le:decomp}, and therefore $\p=\psi$.
\end{proof}

\begin{exm}
For any $n\ge 3$  we have the following pullback in $\Si$.
\[\begin{tikzcd}
    \Si_n\arrow{r}{\s_n^0}\arrow{d}{\s_n^{n-1}}&\Si_{n-1}\arrow{d}{\s_{n-1}^{n-2}}\\
     \Si_{n-1}\arrow{r}{\s_{n-1}^0}&\Si_{n-2}
  \end{tikzcd}\]
\end{exm}

\section{The simplicial category}

Let $\De$ denote the \emph{simplicial category}, which is also known
as \emph{augmented simplex category} (terminology and notation follows
\cite[VII.5]{ML1998}). The objects are given by the finite ordinals
$[n]=\{0,1, \ldots,n-1\}$, $n\in\bbN$, and the morphisms
$\p\colon [m]\to [n]$ are given by maps satisfying $\p(i)\le \p(j)$ for
all $0\le i\le j<m$. For $n\ge 0$ there is the \emph{face map}
 \[\bar\d^i_n\colon [n]\lto [n+1] \qquad (0\le i\le n)\] 
 (the unique injective map not taking the value $i$) and for $n\ge 1$
 the \emph{degeneracy map}
\[\bar\s^i_n\colon [n+1]\lto [n] \qquad (0\le i< n)\]
(the unique surjective map taking twice the value $i$) which are
 known to satisfy the simplicial identities \eqref{eq:rel}. In fact, the category
 $\De$ is generated by the objects $[n]$, the morphisms $\bar\d_n^i$,
 $\bar\s_m^j$, and the identities \eqref{eq:rel}; see \cite[II.2]{GZ1967}
or \cite[VII.5]{ML1998}.

We write $\bar\De=\De[\a^{-1}]$ for the category which is obtained
by formally inverting the morphism $\a=\bar\d_0^0$. This amounts to identifying  the initial and
the terminal object in $\De$.

\begin{prop}\label{pr:simplex}
  The assignments 
  \[[n]\mapsto \begin{cases} \Si_0&n=0\\\Si_{n-1}&n>0,\end{cases} \qquad \bar\d_{n}^i\mapsto \d_{n}^i,\qquad \bar\d_0^0\mapsto\id,\qquad \bar\s_n^i\mapsto
    \s_{n}^i\] provide a functor $p\colon \De\to\Si$ which induces an
  equivalence $\bar\De\iso\Si$.
\end{prop}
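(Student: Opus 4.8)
The plan is to exhibit mutually quasi-inverse functors between $\bar\De$ and $\Si$, working throughout with the presentations of both categories by generators and relations. First I would verify that the stated assignments define a functor $p\colon\De\to\Si$. Since $\De$ is presented by the objects $[n]$, the maps $\bar\d_n^i,\bar\s_n^j$, and the simplicial identities \eqref{eq:rel}, it suffices to check that $p$ respects those relations. Every instance of \eqref{eq:rel} involving only generators with $n\ge 1$ is sent to the corresponding identity in $\Si$, which holds by Lemma~\ref{le:rel}. A quick inspection of the three families shows that the only relation involving $\alpha=\bar\d_0^0$ is the instance $\bar\d_1^0\circ\bar\d_0^0=\bar\d_1^1\circ\bar\d_0^0$ of the first family (with $n=0$, $i=j=0$); under $p$ it becomes $\d_1^0\circ\id=\d_1^1\circ\id$, that is $\d_1^0=\d_1^1$, which is exactly the extra relation valid in $\Si$. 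Hence $p$ is well defined.

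Next, since $p(\alpha)=\id_{\Si_0}$ is an isomorphism, the universal property of the localisation $\bar\De=\De[\alpha^{-1}]$ yields a unique functor $\bar p\colon\bar\De\to\Si$ through which $p$ factors. This $\bar p$ is surjective on objects, as $\bar p([1])=\Si_0$ and $\bar p([n+1])=\Si_n$ for $n\ge 1$.

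To produce a quasi-inverse I would use the presentation of $\Si$ from Lemma~\ref{le:cat-gen} to define $q\colon\Si\to\bar\De$ by $\Si_0\mapsto[1]$, $\Si_n\mapsto[n+1]$ for $n\ge1$, $\d_n^i\mapsto\bar\d_n^i$, and $\s_n^i\mapsto\bar\s_n^i$ (the object choice $[1]$ makes the types of the face maps match directly). The relations \eqref{eq:rel} in $\Si$ are visibly carried to \eqref{eq:rel} in $\bar\De$. The one point requiring the localisation is the relation $\d_1^0=\d_1^1$: in $\De$ one has $\bar\d_1^0\circ\alpha=\bar\d_1^1\circ\alpha$, and since $\alpha$ becomes invertible in $\bar\De$ we may cancel it to obtain $\bar\d_1^0=\bar\d_1^1$ there. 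Thus $q$ is well defined.

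Finally I would check that $\bar p$ and $q$ are mutually quasi-inverse. On objects and generators one computes $\bar p\circ q=\id_\Si$ directly. In the other direction $q\circ\bar p$ fixes every object $[n]$ with $n\ge 1$ but sends $[0]\mapsto[1]$ and $\alpha\mapsto\id_{[1]}$; the morphism $\alpha^{-1}\colon[1]\to[0]$, together with identities at all other objects, provides a natural isomorphism $q\circ\bar p\iso\id_{\bar\De}$, whose naturality need only be verified on the generators of $\De$ and on $\alpha^{-1}$. The main obstacle is the careful bookkeeping around the objects $[0]$ and $[1]$—the initial and terminal objects that get identified—and in particular the verification that inverting $\alpha$ forces precisely the single extra relation $\d_1^0=\d_1^1$ which is all that distinguishes the presentation of $\Si$ in Lemma~\ref{le:cat-gen} from that of $\De$.
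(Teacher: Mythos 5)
Your proof is correct, and it is built from the same ingredients as the paper's: the presentation of $\De$ by the simplicial identities, the presentation of $\Si$ from Lemma~\ref{le:cat-gen}, and the key observation that inverting $\a=\bar\d_0^0$ forces $\bar\d_1^0=\bar\d_1^1$ in $\bar\De$. Where you differ is in how the equivalence is certified. The paper factors $p$ through $\bar\De$ and then asserts bijectivity on isomorphism classes of objects and on morphisms ``since the functor matches generators and relations''; taken literally this presupposes that the presentation of $\De$ descends to a presentation of the localisation $\bar\De$ (objects $[0]$ and $[1]$ identified, extra relation added), a point the paper does not spell out. You avoid this entirely by constructing a functor $q\col\Si\to\bar\De$ in the opposite direction using only universal properties -- that of the presentation of $\Si$ and that of the localisation -- and then checking $\bar p\circ q=\id$ on generators and exhibiting the natural isomorphism $q\circ\bar p\cong\id$ with component $\a^{-1}$ at $[0]$ and identities elsewhere. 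This never requires computing a hom-set of $\bar\De$, so your argument is more self-contained on the one point where the paper is terse. Note also that your $q$ is in effect one of the sections $s_0,s_1$ of $p$ that the paper records immediately after the proposition, composed with the localisation functor $\De\to\bar\De$: the ambiguity $s_i(\d_1^0)=\bar\d_1^i$ that forces the paper to distinguish two sections disappears in $\bar\De$, which is exactly why your $q$ is well defined. Your isolation of the single instance $\bar\d_1^0\circ\a=\bar\d_1^1\circ\a$ (the first simplicial identity with $n=0$, $i=j=0$) as the only relation of $\De$ involving $\a$ is accurate, since $\a$ cannot occur in the second family and the third family requires a degeneracy at level $0$, which does not exist.
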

\begin{proof}
  The functor $p$ is well defined since it maps
  generators to generators and the simplicial identities are satisfied
  in both categories. The functor $p$ inverts $\bar\d_0^0$ and induces
  therefore a functor $\bar\De\to\Si$, which yields a bijection
  between the isomorphism classes of objects. Also for the morphisms
  we obtain bijections since the functor matches generators and
  relations. Note that $\bar\d_1^0=\bar\d_1^1$ in $\bar\De$ since
  $\bar\d_0^0$ is invertible.
\end{proof}

The functor  $p\colon \De\to\Si$ admits two sections $s_0$ and $s_1$
that are given by
\[ \Si_n\mapsto [n+1], \qquad \d_{n}^i\mapsto \bar\d_{n}^i,\qquad
  \s_n^i\mapsto \bar\s_{n}^i,\] except that
$s_i(\d_1^0)=\bar\d_1^i =s_i(\d_1^1)$ for $i=0,1$.

Let us summarise. We have for all $n\ge 1$ diagrams 
\[
  \begin{tikzcd}
\Si_{n-1} \arrow[rr,yshift=-1.5ex,"\d_n^{i}",swap]
  \arrow[rr,yshift=1.5ex,"\d_n^{i+1}"]
 &&\Si_n \arrow{ll}[description]{\s_n^i} 
\end{tikzcd} \qquad (0\le i < n)
\]
satisfying the simplicial identities, but a difference from the usual
simplex category arises because of the extra identity
$\d_1^0=\d_1^1$.

The equivalence in Proposition~\ref{pr:simplex} can be explained in
terms of linear representations of posets. We refer to
Theorem~\ref{th:abelian} and the appendix for further details.
  
\section{Representations}

The category of finite strings models certain categories of linear
representations. In the following we specify the relevant class of
abelian categories and the exact functors between them.

Let $P$ be a poset and $k$ a field. A $k$-linear
representation of $P$ is by definition a functor $P\to\mod k$ into the
category of finite dimensional $k$-spaces, where $P$ is viewed as a category (with
objects the elements in $P$ and a unique morphism $x\to y$ iff
$x\le y$). Morphisms between representations are the natural
transformations, and we denote by $\Rep(P,k)$ the category of all
finite dimensional $k$-linear representations.

For a $k$-linear abelian category $\A$ over a field $k$ we consider
the following conditions.
  \begin{enumerate}
  \item[(Ab1)] $\A$ is \emph{connected}, that is, $\A=\A_1\times\A_2$ implies $\A_1=0$ or $\A_2=0$.
  \item[(Ab2)] $\A$ is a \emph{length category}, that is, every object has a finite composition
    series, and  there are only finitely many isomorphism classes of simple objects.
  \item[(Ab3)] $\A$ is \emph{hereditary}, that is, $\Ext^2$ vanishes.
  \item[(Ab4)] $\A$ is \emph{uniserial}, that is, every indecomposable object has a unique
    composition series.
  \item[(Ab5)] $\A$ is \emph{split}, that is, $\End(S)\cong k$ for every simple object $S$.
  \item[(Ab6)] $\A$ is of \emph{finite type}, that is, there are only finitely many
    isomorphism classes of indecomposable objects.
  \end{enumerate}

\begin{lem}\label{le:uniserial}
  Let $\A$ be a $k$-linear abelian category satisfying
  \emph{(Ab1)--(Ab6)}.  Then there is an equivalence
  $\A\iso \Rep([n]^\op,k)$, where $n$ equals the number of isomorphism
  classes of simple objects in $\A$.
\end{lem}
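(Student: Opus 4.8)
The plan is to show that an abelian category $\A$ satisfying (Ab1)--(Ab6) is equivalent to the representations of a type $A$ quiver with linear orientation, namely $\Rep([n]^\op,k)$. The strategy is to reconstruct the quiver directly from the structure of $\A$, using the classification of indecomposables forced by the uniserial and hereditary hypotheses.

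First I would analyse the simple objects. By (Ab2) and (Ab5) there are finitely many simple objects $S_1,\ldots,S_n$, each with $\End(S_i)\cong k$. The key numerical invariant is $\dim_k\Ext^1(S_i,S_j)$, which by (Ab3) together with (Ab5) should be at most $1$ and which determines the arrows of an underlying valued graph; splitness guarantees the valuations are trivial, so we get an ordinary quiver $Q$. I would then argue that uniseriality (Ab4) forces this quiver to be of type $A$ with linear orientation: in a uniserial category every indecomposable has a unique composition series, which means the $\Ext^1$-quiver cannot branch (otherwise some indecomposable would admit two distinct composition series) and cannot have a vertex with two incoming or two outgoing arrows. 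Connectedness (Ab1) then makes $Q$ a single line $1\to 2\to\cdots\to n$, i.e.\ precisely the Hasse quiver of the poset $[n]^\op$.

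The next step is to produce the equivalence $\A\iso\Rep([n]^\op,k)$. The cleanest route is to exhibit a progenerator: let $P_i$ be the projective cover of $S_i$ (these exist in a length category with enough structure, or can be built as the unique uniserial object with the appropriate composition series reading down the line), put $P=\bigoplus_i P_i$, and show $\End(P)^\op$ is isomorphic to the path algebra $k[n]^\op$ of the linearly oriented $A_n$ quiver. Since $\Rep([n]^\op,k)\cong\mod k[n]^\op$, the functor $\Hom_\A(P,-)$ then gives the desired equivalence by the standard Morita-type argument, once we check $P$ is a projective generator of finite length. Finite type (Ab6) is automatically consistent here since linearly oriented $A_n$ is representation-finite, so (Ab6) serves as a sanity constraint rather than an extra input, and indeed the argument shows (Ab1)--(Ab5) already pin down the category.

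The main obstacle I expect is establishing that the uniserial hypothesis genuinely forces the linear $A_n$ shape and, relatedly, controlling the projectives $P_i$ abstractly: one must rule out the quiver having cycles, multiple arrows, or branch points purely from the existence and uniqueness of composition series, and one must verify that the indecomposables are exactly the intervals (uniserial objects) so that $\End(P)$ comes out as the expected triangular path algebra. Making the passage from the combinatorial $\Ext$-quiver to an honest equivalence precise — in particular checking that $\Hom_\A(P,-)$ lands in and is essentially surjective onto finite-dimensional representations — is where the care is needed, though each individual verification is routine once the quiver has been identified.
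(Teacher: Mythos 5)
Your strategy (reconstruct the $\Ext$-quiver of the simples, then exhibit a progenerator and conclude by a Morita argument) is genuinely different from the paper, whose proof is simply a reference to the Amdal--Ringdal classification of uniserial categories \cite{AR1968}; a self-contained argument would be welcome. But as written your proof has a real gap, located exactly where you dismiss (Ab6). From uniseriality one can indeed conclude that the $\Ext$-quiver has no multiple arrows and that every vertex has at most one incoming and at most one outgoing arrow; however, a \emph{connected} quiver with these properties is either the linearly oriented $A_n$ quiver \emph{or an oriented cycle} (including a single loop when $n=1$), and uniseriality cannot exclude the cycle. The category $\tilde\A_n$ of finite-dimensional nilpotent representations of the cyclic quiver --- for $n=1$, the finite-dimensional nilpotent $k[x]$-modules --- is connected, a length category, hereditary, uniserial and split, so it satisfies (Ab1)--(Ab5), yet it is not equivalent to $\Rep([n]^\op,k)$. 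This is precisely the content of Lemma~\ref{le:uniserial-cyc} and Theorem~\ref{th:uniserial} in the paper: (Ab1)--(Ab5) together with the \emph{failure} of (Ab6) characterises $\tilde\A_n$. So your claims that cycles can be ruled out ``purely from the existence and uniqueness of composition series'' and that ``(Ab1)--(Ab5) already pin down the category'' are false; (Ab6) is the hypothesis that separates the line from the cycle, and your argument never uses it in a substantive way.

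The same omission undermines the Morita step: projective covers $P_i$ need not exist in a length category. Indeed $\tilde\A_n$ has no nonzero projective objects at all, since every indecomposable of length $\ell$ with top $S_i$ receives a non-split epimorphism from the indecomposable of length $\ell+1$ with the same top. Thus the existence of the $P_i$ must itself be extracted from (Ab6): under finite type one may take $P_i$ to be an indecomposable of maximal length with top $S_i$ and then verify projectivity using hereditariness and uniseriality. (A smaller slip: $\dim_k\Ext^1(S_i,S_j)\le 1$ does not follow from (Ab3) and (Ab5) --- the Kronecker quiver is hereditary and split --- it is again uniseriality that forbids multiple arrows; this one is harmless since you invoke (Ab4) for it later.) The repair is clear: after bounding the in- and out-degrees, use (Ab6) to exclude the oriented cycle, noting that nilpotent representations of a cycle contain indecomposables of arbitrary length obtained by winding around the cycle, and use (Ab6) once more to produce the maximal uniserial objects that serve as projective covers. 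With these two insertions your Morita argument goes through.
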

\begin{proof}
  See for example the description of uniserial categories in
  \cite{AR1968}.
\end{proof}

From now on we fix a field $k$ and set $\A_n:=\Rep([n]^\op,k)$ for
$n\in\bbN$.  An object $M\in\A_n$ is a diagram
\[M(n-1)\lto\cdots\lto M(1)\lto M(0)\]
of $k$-spaces. For any string $s\in\Si_n$ we define a representation
$M_s\in\A_n$ as follows. Set $M_*=0$.  For $s=(s',s'')$ let $M_s$ be
the representation\footnote{This is also known as \emph{string module}
in the terminology of \cite{BR1987}.}
\[0\lto\cdots \lto 0\lto k\xto{\ 1\ }\cdots\xto{\ 1\ }k\lto 0\lto\cdots\lto 0\] such that
$M_s(i)=k$ iff $s'\le i\le s''$. Set $M_i:=M_{s_i}$ for
$0\le i <n$. Observe that the composition length of $M_s$ equals
$\ell(s)$, and the composition factors of $M_s$ correspond bijectively
to the composition factors of $s$.

\begin{lem}\label{le:lin-alg}
  Let  $n\in\bbN$.
  \begin{enumerate}
  \item  The assignment $s\mapsto M_s$ induces a bijection
  between $\Si_n\setminus\{*\}$ and the isomorphism classes of
  indecomposable objects in  $\A_n$.
  \item For $s,t,u\in\Si_n$ there is an exact sequence $0\to M_s\to M_t\to M_u\to 0$ if
    and only if $t=su$.
  \end{enumerate}
\end{lem}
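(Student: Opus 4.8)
The plan is to treat $\A_n=\Rep([n]^\op,k)$ concretely as the category of finite-dimensional representations of the linearly oriented quiver $(n-1)\to\cdots\to 1\to 0$, and to read both statements off the explicit shape of the string modules $M_s$.

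For statement (1) I would first observe that $M_{(s',s'')}$ is \emph{thin}, meaning $\dim_k M_s(i)\le 1$ for all $i$ with every nonzero structure map equal to $\id_k$. Consequently an endomorphism of $M_s$ is multiplication by a single scalar, so $\End(M_s)\cong k$ is local and $M_s$ is indecomposable. Distinct nonzero strings have distinct supports $\{i\mid M_s(i)\ne 0\}=\{s',\ldots,s''\}$, hence distinct dimension vectors, so $s\mapsto M_s$ is injective on isomorphism classes. For surjectivity I would use that $\A_n$ is a uniserial length category satisfying (Ab1)--(Ab6): every indecomposable $M$ has a unique composition series, and its composition factors must be \emph{consecutive} simples, since among the $M_i$ the only nonsplit extensions are $\Ext^1(M_i,M_{i-1})\ne 0$ coming from the arrow $i\to i-1$. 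Thus the indices supporting $M$ form a contiguous block $\{s',\ldots,s''\}$, and choosing compatible bases identifies $M\cong M_{(s',s'')}$. Alternatively this surjectivity is exactly the type-$A_n$ classification via Gabriel's theorem.

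For statement (2) the key computation is the subobject lattice of a fixed $M_t$ with $t=(t',t'')$. A subrepresentation $N\subseteq M_t$ satisfies $N(i)\in\{0,M_t(i)\}$, and compatibility with the identity maps $M_t(i)\to M_t(i-1)$ for $t'<i\le t''$ forces $N(i)\ne 0\Rightarrow N(i-1)\ne 0$. Hence the support of $N$ is a down-closed sub-interval of $\{t',\ldots,t''\}$, so either $N=0$ or $N=M_{(t',m)}$ with $t'\le m\le t''$; the corresponding quotient is $M_t$ when $N=0$, and $M_{(m+1,t'')}$ otherwise (read as $M_*$ when $m=t''$). Setting $s=(t',m)$ and $u=(m+1,t'')$ we have $s''+1=u'$, so $su=(s',u'')=t$. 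This yields, for each such $m$, a short exact sequence $0\to M_s\to M_t\to M_u\to 0$ and proves the implication $t=su\Rightarrow$ existence; the boundary values $m=t'-1$ and $m=t''$ produce the cases $s=*$ and $u=*$, where $su=t$ still holds via the conventions $*u=u$ and $s*=s$.

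For the converse I would start from an arbitrary exact sequence $0\to M_s\to M_t\to M_u\to 0$. The monomorphism realises $M_s$ as a subobject of $M_t$, so by the subobject computation $M_s\cong M_{(t',m)}$ for some $m$, and by (1) this forces $s=(t',m)$; then $M_u\cong M_t/M_s\cong M_{(m+1,t'')}$ gives $u=(m+1,t'')$, whence $t=su$. The genuinely degenerate cases, where one module is $0$, I would dispatch directly: if $s=*$ then exactness gives $M_t\cong M_u$, i.e.\ $t=u=su$, and symmetrically for $u=*$. The main point requiring care is the surjectivity in (1)---ruling out non-string indecomposables---since everything else reduces to transparent linear algebra with the thin modules $M_s$ and to tracking the $*$-conventions; but this surjectivity is precisely the classical hereditary-uniserial (type $A_n$) classification, which I would invoke through uniseriality together with the linear Ext-quiver.
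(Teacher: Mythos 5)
Your proposal is correct. Note that the paper offers no actual argument here---its proof reads, in full, ``Straightforward.''---so there is nothing to compare approaches against; your write-up is precisely the kind of direct verification the author is waving at. Your treatment of (2) via the subobject computation for thin modules (down-closed supports, hence subobjects $M_{(t',m)}$ with quotients $M_{(m+1,t'')}$, plus careful bookkeeping of the $*$-conventions) is complete and elementary, and your injectivity and indecomposability arguments in (1) are likewise self-contained. The one place where you import outside machinery is surjectivity in (1), which you settle by invoking uniseriality of $\A_n$ together with the Ext-quiver, or alternatively Gabriel's theorem for type $A_n$; this is legitimate (the paper itself leans on the same classical facts via its citation of Amdal--Ringdal in Lemma~\ref{le:uniserial}), but if you wanted the lemma fully self-contained you could replace it by a short induction: if $M$ is indecomposable and $i$ is maximal with $M(i)\neq 0$, a splitting of $M(i)\to M(i-1)$ into kernel and a complement propagates to a direct-sum decomposition of $M$, forcing the structure maps to be injective on a one-dimensional ``thread'' that peels off a copy of some $M_s$ as a direct summand. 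You correctly identify this surjectivity as the only step with real content; everything else is, as the paper says, straightforward linear algebra.
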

\begin{proof}
  Straightforward.
\end{proof}

Next we specify the class of exact functors which arises naturally in
our context. For each exact functor $F\colon\A\to\B$ between abelian
categories we denote by $\Ker F$ the full subcategory of $\A$ given by
the objects $X\in\A$ such that $FX=0$. This is a Serre subcategory and
we denote by $\A/(\Ker F)$ the corresponding quotient, cf.\
\cite{Ga1962}.

We say that an exact functor $F\colon\A\to\B$ between abelian
categories admits a \emph{homological factorisation} if the induced
functor $\A/\A'\to \B$ with $\A'=\Ker F$ induces for all objects
$X,Y\in\A$ bijections
  \[\Ext_{\A/\A'}^i(X,Y)\lto \Ext_\B^i(FX,FY) \qquad (i\ge 0).\]

A full subcategory of an abelian category
is \emph{thick} if it is closed under direct summands and the
\emph{two out of three property} holds for any short exact sequence
(that is, if two terms belong to the subcategory, then also the
third).

\begin{lem}
An exact functor  $F\colon\A\to\B$  between hereditary abelian
  categories admits a \emph{homological factorisation} 
 if and only if $\A/(\Ker F)$ identifies with a thick subcategory of $\B$.
\end{lem}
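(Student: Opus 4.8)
The plan is to match the two conditions degree by degree for the induced exact functor $\bar F\colon \A/\A'\to\B$, where $\A'=\Ker F$ and $F=\bar F\circ q$ for the exact quotient functor $q\colon\A\to\A/\A'$. The statement that $\A/\A'$ \emph{identifies with a thick subcategory} of $\B$ means precisely that $\bar F$ is fully faithful and its essential image $\C:=\Im\bar F$ is thick. I read off the matching as follows: the degree $i=0$ part of a homological factorisation is full faithfulness of $\bar F$; the degree $i=1$ part will correspond to closure of $\C$ under extensions; and the degrees $i\ge 2$ are forced on both sides, since $\B$ is hereditary and, as I explain, so is every thick subcategory of $\B$ as well as the quotient $\A/\A'$.

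For the implication $(\Rightarrow)$, assume $F$ admits a homological factorisation. The bijection in degree $0$ says $\bar F$ is fully faithful, so it restricts to an equivalence $\A/\A'\iso\C$. It remains to check that $\C$ is thick. Closure under direct summands follows since $\A/\A'$ is abelian, hence idempotent complete: a summand of $\bar F X$ is the image of an idempotent of $X$, which splits in $\A/\A'$. For the \emph{two out of three property} I treat a short exact sequence $0\to Z'\to Z\to Z''\to 0$ in $\B$ with two terms in $\C$. When $Z',Z''\in\C$, the extension class lies in $\Ext^1_\B(Z'',Z')$, and surjectivity of the degree $1$ bijection produces an extension in $\A/\A'$ whose image under the exact functor $\bar F$ is isomorphic to the given one, so $Z\in\C$. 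When instead $Z$ together with $Z'$ or $Z''$ lies in $\C$, I realise the map $Z'\to Z$ (resp.\ $Z\to Z''$) as $\bar F$ applied to a morphism of $\A/\A'$; exactness of $\bar F$ together with the fact that a fully faithful functor reflects zero objects shows this morphism is mono (resp.\ epi), and then the missing term is $\bar F$ of its cokernel (resp.\ kernel), hence in $\C$.

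For the converse $(\Leftarrow)$, assume $\bar F$ is fully faithful with thick image $\C$, so $\bar F\colon\A/\A'\iso\C$ is an equivalence and induces isomorphisms $\Ext^i_{\A/\A'}(qX,qY)\iso\Ext^i_\C(FX,FY)$ for all $i\ge 0$. It then suffices to identify $\Ext^i_\C$ with $\Ext^i_\B$ on objects of $\C$. In degrees $0$ and $1$ this is immediate: $\C$ is full, and thickness (closure under extensions) gives $\Ext^1_\C=\Ext^1_\B$. In degrees $\ge 2$ both vanish once I know $\C$ is hereditary, which I deduce from $\B$ being hereditary: for $X,Y\in\C$ and any short exact sequence $0\to X'\to X\to X''\to 0$ in $\C$, the square comparing $\Ext^1_\C(X,Y)\to\Ext^1_\C(X',Y)$ with $\Ext^1_\B(X,Y)\to\Ext^1_\B(X',Y)$ has both vertical maps the thickness isomorphisms, while the bottom map is surjective because $\B$ is hereditary; hence $\Ext^1_\C(-,Y)$ is right exact, i.e.\ $\Ext^2_\C(-,Y)=0$. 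Combining the equivalence with $\Ext_\C=\Ext_\B$ yields the homological factorisation.

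The main obstacle, and the only place where hereditariness is genuinely used, is the interplay in degree $1$: in both directions one must pass between an abstract $\Ext^1$-bijection and the geometric statement that $\C$ is closed under extensions, and one must be careful that the extension correspondence is compatible with the exact functor $\bar F$ rather than merely bijective on classes. The degree $\ge 2$ bookkeeping, by contrast, is purely formal once the hereditariness of $\C$ (equivalently of $\A/\A'$) is established.
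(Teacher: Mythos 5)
Your proof is correct and takes essentially the same route as the paper's: both match the degree-$0$ bijection with full faithfulness of the induced functor, match the degree-$1$ bijection with extension closure of the image, use exactness to get closure under kernels, cokernels and summands, and let heredity dispose of degrees $\ge 2$. Your write-up simply fills in details the paper leaves implicit, notably the remaining two-out-of-three cases and the verification that the image (equivalently $\A/(\Ker F)$) is hereditary in the converse direction.
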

\begin{proof}
  Set $\A'=\Ker F$ and suppose $F$ identifies $\A/\A'$ with a full
  subcategory $\B'\subseteq\B$.  Clearly, $\B'$ is closed under
  kernels and cokernels of morphisms since $F$ is exact. The
  subcategory $\B'$ is extension closed if and only if the induced map
  $\Ext_{\A/\A'}^1(X,Y)\to \Ext_\B^1(FX,FY)$ is a bijection for all
  $X,Y\in\A$.
\end{proof}  

Not all exact functors admit a homological factorisation.  A simple
example is for any field $k$ the exact functor $\mod k\to\mod k$ given
by $X\mapsto X\otimes_k k^2$.

For $m,n\in\bbN$ we denote by $\Hom(\A_m,\A_n)$ the set of $k$-linear
exact functors $\A_m\to\A_n$, up to natural isomorphism, that
admit a homological factorisation.  We define natural maps
\[\Hom(\A_m,\A_n)\xto{\ \a_{mn}\ } \Hom(\Si_m,\Si_n)\quad\text{and}\quad
  \Hom(\Si_m,\Si_n)\xto{\ \b_{mn}\ } \Hom(\A_m,\A_n)\] as follows.

Any morphism $\p\colon [m]\to [n]$ induces an exact functor
$\p^*\colon\A_n\to \A_m$ via precomposition. Let us set
$s_n^i:=(\bar\d_{n-1}^{i})^*$ for $0\le i< n$.
  
\begin{lem}\label{le:recollement}
 Let $n\ge 1$. There are canonical recollements of abelian
categories
  \begin{equation*}
\begin{tikzcd}
  \A_1 \arrow[tail]{rr} &&\A_n
  \arrow[twoheadrightarrow,yshift=-1.5ex]{ll}
  \arrow[twoheadrightarrow,yshift=1.5ex]{ll}
  \arrow[twoheadrightarrow]{rr}[description]{s_n^i} &&\A_{n-1}
  \arrow[tail,yshift=-1.5ex]{ll}{d_n^{i}}
  \arrow[tail,yshift=1.5ex]{ll}[swap]{d_n^{i+1}}&(0\le i < n)
\end{tikzcd}
\end{equation*}
such that
\[\Ker s_n^i=\add M_i,\qquad \Im d_n^{i}={M_i}^\perp, \qquad \Im
  d_n^{i+1}={^\perp M_i}.\]
The functors $s_n^i$, $d_n^{i}$, $d_n^{i+1}$ are exact. Moreover, they
send indecomposable objects to indecomposable objects or to zero.
\end{lem}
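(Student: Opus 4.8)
The plan is to construct the recollement explicitly from the restriction functors along the simplicial maps and then verify the required properties. The category $\A_n=\Rep([n]^\op,k)$ consists of diagrams $M(n-1)\to\cdots\to M(0)$, so restriction along $\bar\d_{n-1}^i\colon[n-1]\to[n]$ gives $s_n^i=(\bar\d_{n-1}^i)^*\colon\A_n\to\A_{n-1}$, which simply deletes the vertex $i$ from the diagram. First I would identify the left and right adjoints of $s_n^i$. Since $\bar\d_{n-1}^i$ is an order-preserving injection, precomposition admits both a left Kan extension and a right Kan extension along it, and for this combinatorial situation these are computed pointwise: the right adjoint $d_n^{i+1}$ extends a diagram by placing at position $i$ the value $M(i-1)$ (copying from the left, i.e.\ the object immediately preceding), whereas the left adjoint $d_n^i$ places at position $i$ the value $M(i)$ (copying from the right). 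These are fully faithful because $\bar\d_{n-1}^i$ is injective, so their images are the claimed full subcategories ${}^\perp M_i$ and ${M_i}^\perp$, and exactness of all three functors is immediate from the pointwise formulas since limits and colimits of $k$-spaces are exact.

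Next I would pin down the kernel and the perpendicular images. A representation $M$ lies in $\Ker s_n^i$ exactly when $M(j)=0$ for all $j\neq i$, which by Lemma~\ref{le:lin-alg} forces $M$ to be a direct sum of copies of $M_i$; hence $\Ker s_n^i=\add M_i$, giving the Serre subcategory that closes the left-hand side of the recollement into $\A_1\cong\add M_i$. For the images, I would compute $\Im d_n^i$ and $\Im d_n^{i+1}$ directly from the extension formulas: the right adjoint's image consists of those $M$ for which the map $M(i)\to M(i-1)$ is an isomorphism, and a short computation with Lemma~\ref{le:lin-alg}(2) shows this is precisely the full subcategory ${}^\perp M_i$ of objects $X$ with $\Hom(M_i,X)=0=\Ext^1(M_i,X)$; symmetrically the left adjoint's image is ${M_i}^\perp$. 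The adjunction units and counits then assemble into the recollement diagram, where the two outer functors on the left realise $\A_1$ as the Serre subcategory and the localisation, and the inclusions $d_n^i,d_n^{i+1}$ realise $\A_{n-1}$ as the two complementary torsion(-free) pieces.

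The final clause, that these functors preserve indecomposables or send them to zero, I would deduce from the bijection in Lemma~\ref{le:lin-alg}(1): the indecomposables are exactly the string modules $M_s$, and the pointwise formulas show that $s_n^i$, $d_n^i$, $d_n^{i+1}$ each carry a string module to another string module or to $0$ (for instance $s_n^i$ deletes vertex $i$, splitting a string that passes through $i$ only when $s=s_i$, in which case the image is zero). The main obstacle I anticipate is verifying cleanly that the adjoint images coincide with the perpendicular categories ${}^\perp M_i$ and ${M_i}^\perp$, rather than merely being contained in them; this requires checking that an object orthogonal to $M_i$ on the appropriate side actually arises as an extension along $\bar\d_{n-1}^i$, which I would establish by an explicit inverse construction or by a dimension/length count comparing $\A_n$, $\A_{n-1}$, and $\add M_i$ through the exact sequences of Lemma~\ref{le:lin-alg}(2). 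Once the adjunctions are fully faithful and the kernel is identified, the remaining recollement axioms are formal.
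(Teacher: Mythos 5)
Your construction is genuinely different from the paper's: the paper does not compute Kan extensions at all, but forms the Serre quotient $\A_n/(\add M_i)$ and quotes Gabriel's localisation theory, under which the right and left adjoints of the quotient functor embed the quotient as the two perpendicular categories, exactness of these embeddings coming from the vanishing of $\Ext^2$. Your pointwise formulas for the two adjoints of $s_n^i=(\bar\d_{n-1}^i)^*$ and the identification $\Ker s_n^i=\add M_i$ are correct. The genuine problem is that you have crossed the two remaining identifications. The right adjoint (inserting $M(i-1)$ at vertex $i$) has image $\{X: X(i)\to X(i-1)\ \text{invertible}\}$, exactly as you say; but from the projective resolution $0\to M_{(0,i-1)}\to M_{(0,i)}\to M_i\to 0$ one gets $\Hom(M_i,X)\cong\Ker\bigl(X(i)\to X(i-1)\bigr)$ and $\Ext^1(M_i,X)\cong\Coker\bigl(X(i)\to X(i-1)\bigr)$ for $i>0$ (the boundary cases are similar), so this subcategory is $M_i^\perp$, the \emph{right} perpendicular category, and the right adjoint is therefore $d_n^i$, not $d_n^{i+1}$. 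Dually, the left adjoint (inserting $M(i)$) has image $\{X: X(i+1)\to X(i)\ \text{invertible}\}={}^\perp M_i=\{X:\Hom(X,M_i)=0=\Ext^1(X,M_i)\}$, and it is $d_n^{i+1}$. Your parenthetical gloss ``${}^\perp M_i$ of objects $X$ with $\Hom(M_i,X)=0=\Ext^1(M_i,X)$'' reverses the standard meaning of left and right perpendicular (that set is $M_i^\perp$); this second reversal cancels the first, so your displayed conclusions match the lemma typographically, but both intermediate identifications are false under the conventions the paper (following Gabriel) uses.

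The mislabelling is not cosmetic, because the names $d_n^i$, $d_n^{i+1}$ are precisely what the lemma pins down via the images, and the paper's choice is forced by what comes immediately after: setting $\b_{n-1,n}(\d_n^{i}):=d_n^{i}$ must yield functors satisfying the simplicial identities \eqref{eq:rel}, which requires $d_n^i$ to be a section of both $s_n^i$ and $s_n^{i-1}$; indeed the paper's $d_n^i=(\bar\s_{n-1}^{i-1})^*$ is simultaneously the right adjoint of $s_n^i$ and the left adjoint of $s_n^{i-1}$. With your labels one gets $s_n^j\circ d_n^i=\id$ for $i=j$ or $i=j-1$, and the identity required for $i=j+1$ fails. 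So: keep your Kan-extension computation, but swap the two functor names and the two perpendicular symbols; with that correction (and granting the construction of the two adjoints of the inclusion $\add M_i\hookrightarrow\A_n$, which you call formal and which the paper also leaves implicit), your argument is a complete and more explicit alternative to the paper's proof.
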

\begin{proof}
  Let $\C=\Ker s_n^i$ denote the full subcategory of objects in $\A_n$
  that are annihilated by $s_n^i$.  It is clear that $M_i$ is the
  unique simple object in $\C$. Thus $\C$ equals the full subcategory
  given by the finite direct sums of copies of $M_i$. Then the right
  adjoint of the quotient functor $\A_n\to\A_n/\C$ identifies
  $\A_n/\C$ with $\C ^\perp$, while the left adjoint identifies
  $\A_n/\C$ with $^\perp\C$. Here, we consider the perpendicular
  categories defined with respect to $\Hom$ and $\Ext^1$; cf.\
  \cite[III.2]{Ga1962}.  This yields the descriptions of $d_n^{i}$ and
  $d_n^{i+1}$. The embedding of any perpendicular category into $\A_n$
  is exact since $\Ext^2$ vanishes. The functor $s_n^i$ annihilates
  $M_i$ and sends all other indecomposables to indecomposable objects.
\end{proof}

For $n\ge 1$ we set
\[\b_{n,n-1}(\s_n^i):= s_n^i\qquad\text{and}\qquad\b_{n-1,n}(\d_n^{i}):=d_n^{i}.\]
One checks that these functors satisfy the identities \eqref{eq:rel}.
Thus the assignment extends uniquely to maps
$\b_{mn}\colon \Hom(\Si_m,\Si_n)\to \Hom(\A_m,\A_n)$ for all
$m,n\in\bbN$, using Lemma~\ref{le:cat-gen}. 

\begin{rem}
We have $d_n^i=(\bar\s_{n-1}^{i-1})^*$ for $0< i< n$. Thus $d^0_n$
and $d^n_n$ are not obtained from morphisms $[n-1]\to [n]$.
\end{rem}

\begin{lem}\label{le:exact-fun}
  Let $m,n\in\bbN$.  An exact functor $F\colon \A_m\to \A_n$ that
  admits a homological factorisation
  induces a morphism $\p\colon \Si_m\to\Si_n$ which is given by
  $F(M_s)=M_{\p(s)}$.
\end{lem}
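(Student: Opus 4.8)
The plan is to show that an exact functor $F\colon\A_m\to\A_n$ admitting a homological factorisation acts on indecomposable objects by sending each $M_s$ (for $s\in\Si_m\setminus\{*\}$) either to zero or to some indecomposable $M_{s'}$, and then to verify that the resulting assignment $s\mapsto s'$ (with $*\mapsto *$) is a morphism of strings in the sense of \eqref{eq:mor}.

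\smallskip

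First I would record the effect of $F$ on indecomposables. Writing $\A'=\Ker F$, the Serre subcategory $\A'$ is itself a length category closed under subquotients, so it is determined by the simple objects it contains; hence for each simple $M_i$ either $M_i\in\A'$ (so $FM_i=0$) or $M_i\notin\A'$. Because $F$ admits a homological factorisation, the induced functor $\A_m/\A'\to\A_n$ is fully faithful and exact, and its image $\B'$ is thick by the lemma preceding Lemma~\ref{le:recollement}. The composite $\A_m\to\A_m/\A'\iso\B'\hookrightarrow\A_n$ therefore sends indecomposables to indecomposables or to zero: an object $X\notin\A'$ remains indecomposable in the quotient (localisation of a length category at a Serre subcategory preserves indecomposability of objects not killed), and the fully faithful embedding into $\A_n$ preserves indecomposability. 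By Lemma~\ref{le:lin-alg}(1) every indecomposable of $\A_n$ is of the form $M_{s'}$ for a unique $s'\in\Si_n\setminus\{*\}$, so we obtain a well-defined map $\p\colon\Si_m\to\Si_n$ by $FM_s=M_{\p(s)}$, setting $\p(*)=*$ (consistent with $F0=0$).

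\smallskip

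Next I would check that $\p$ is a morphism of strings, i.e.\ satisfies \eqref{eq:mor}. The multiplicative condition $\p(st)=\p(s)\p(t)$ is controlled by exact sequences: by Lemma~\ref{le:lin-alg}(2), $t=su$ (with $s,u,t$ nonzero and $st\neq *$ read correctly) is equivalent to the existence of a short exact sequence $0\to M_s\to M_t\to M_u\to 0$. Applying the exact functor $F$ yields $0\to M_{\p(s)}\to M_{\p(t)}\to M_{\p(u)}\to 0$ in $\A_n$ whenever none of the terms is killed, and again by Lemma~\ref{le:lin-alg}(2) this forces $\p(t)=\p(s)\p(u)$, which is exactly multiplicativity on products of adjacent strings. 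The degenerate cases (where one of $\p(s),\p(u)$ is zero, i.e.\ the corresponding $M$ lies in $\A'$) must be handled separately: if $FM_s=0$ then the sequence shows $FM_t\cong FM_u$, giving $\p(t)=\p(u)=\p(s)\p(u)$ since $\p(s)=*$, and symmetrically. Finally, the second clause of \eqref{eq:mor}, namely $st\neq *=\p(st)\implies \p(s)=*=\p(t)$, follows because if $M_{st}=M_t$ maps to zero while $M_s,M_u$ are genuine composition pieces of $M_{st}$, exactness of $F$ forces $FM_s=0=FM_u$, i.e.\ $\p(s)=*=\p(u)$; here one uses that $\Ker F$ is closed under subobjects and quotients to propagate vanishing along the composition series.

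\smallskip

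The main obstacle I anticipate is the bookkeeping around the non-associative, partially defined multiplication on strings: the equivalence in Lemma~\ref{le:lin-alg}(2) only describes \emph{adjacent} concatenations $t=su$, whereas \eqref{eq:mor} asks about all products $st$, including those that multiply to $*$. The delicate point is to translate the purely combinatorial rule for $st$ (concatenate only when $s''+1=t'$, otherwise $*$) into statements about short exact sequences and then to confirm that $F$, being exact and killing a Serre subcategory, respects all these cases uniformly — in particular that $\p(s)\p(t)$ computed combinatorially in $\Si_n$ agrees with the indecomposable $F$ produces, rather than merely agreeing up to the ambiguity of how $\A'$ truncates composition series. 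Once the correspondence of Lemma~\ref{le:lin-alg} is leveraged carefully for each of the four cases in the definition of $st$, the verification of \eqref{eq:mor} becomes routine.
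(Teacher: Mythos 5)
Your overall route is the same as the paper's: show that $F$ sends each indecomposable $M_s$ either to zero or to an indecomposable, use Lemma~\ref{le:lin-alg}(1) to define $\p$, and check \eqref{eq:mor}. Your case analysis for \eqref{eq:mor} via short exact sequences and Lemma~\ref{le:lin-alg}(2) is correct, and in fact more explicit than the paper, which leaves that verification implicit.

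However, the justification of the crucial first step has a genuine gap. You assert, as a general principle, that localisation of a length category at a Serre subcategory preserves indecomposability of objects that are not killed. This is false. Take $\A=\Rep(Q,k)$ for the quiver $Q\colon 1\to 2\leftarrow 3$, let $M$ be the indecomposable representation with dimension vector $(1,1,1)$, and let $\A'=\add S_2$ be the Serre subcategory generated by the middle simple. Then $M\notin\A'$, but the canonical map $M\to M/S_2\cong S_1\oplus S_3$ becomes an isomorphism in $\A/\A'$, so $M$ decomposes there and $\End_{\A/\A'}(M)\cong k\times k$. So your parenthetical principle cannot carry the argument; what is really needed is the special (uniserial, linearly oriented) structure of $\A_m$, and this is precisely where the paper invokes Lemma~\ref{le:recollement}: since $\Ker F$ is the Serre subcategory generated by the simples it contains, the quotient functor $\A_m\to\A_m/(\Ker F)$ is a composite of functors of the form $s_p^i\colon\A_p\to\A_{p-1}$, and Lemma~\ref{le:recollement} states that each $s_p^i$ sends indecomposables to indecomposables or to zero. (Alternatively, one can argue directly that each $M_s$ is a uniserial object whose subquotients are again interval modules with at most one-dimensional Hom spaces, so by Gabriel's formula for morphisms in a quotient category its endomorphism ring in any Serre quotient of $\A_m$ stays at most one-dimensional.) With this replacement the rest of your argument goes through: the fully faithful embedding $\A_m/(\Ker F)\to\A_n$ preserves endomorphism rings and hence indecomposability, and your verification of \eqref{eq:mor} is sound.
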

\begin{proof}
  The functor $F$ identifies $\A_m/(\Ker F)$ with a full subcategory
  of $\A_n$.  The canonical functor $\A_m\to \A_m/(\Ker F)$ can be
  written as composite of functors of the form
  $s_p^i\colon\A_p\to\A_{p-1}$, which map indecomposables either to
  indecomposables or to zero.  Thus for any $s\in\Si_m$ we have
  $F(M_s)= M_t$ for some $t\in\Si_n$, using Lemma~\ref{le:lin-alg}.
  This yields a morphism $\p\colon \Si_m\to\Si_n$ by setting
  $\p(s)=t$.
\end{proof}

The above lemma provides maps $\a_{mn}\colon \Hom(\A_m,\A_n)\to
\Hom(\Si_m,\Si_n)$ satisfying  $\a_{nn}(\id)=\id$ and $\a_{mp}(G\circ
F)=\a_{np}(G)\circ \a_{mn}(F)$ for any pair of composable functors
$F,G$.

\begin{lem}\label{le:standard}
  A $k$-linear equivalence $\A_n\iso\A_n$ is naturally isomorphic to
  the identity.
\end{lem}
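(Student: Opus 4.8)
The plan is to pin down $F$ first on the simple objects, then on the indecomposable projectives, and finally to upgrade these object-level isomorphisms to a genuine natural isomorphism $F\cong\Id$ by transporting the question to the endomorphism algebra of a projective generator.

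First I would read off the extension quiver of $\A_n$ from Lemma~\ref{le:lin-alg}. Writing $M_i=M_{s_i}$ for the simple objects, the identity $s_{i-1}s_i=(i-1,i)$ produces via Lemma~\ref{le:lin-alg}(2) a non-split sequence $0\to M_{i-1}\to M_{(i-1,i)}\to M_i\to 0$, and the same lemma shows these are the only non-trivial extensions between simples; hence $\Ext^1_{\A_n}(M_i,M_j)\neq 0$ precisely when $j=i-1$. Any equivalence $F$ preserves simple objects and the dimensions of $\Ext^1$-groups, so it induces a permutation $\pi$ of $\{0,\dots,n-1\}$ with $\pi(i-1)=\pi(i)-1$ for $1\le i<n$. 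The only such permutation is the identity (the extension quiver is a directed path, which is rigid), so $F(M_i)\cong M_i$ for every $i$.

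Next I would pass to projectives. An equivalence preserves projective covers, so $F$ carries the projective cover $P_i$ of $M_i$ to the projective cover of $F(M_i)\cong M_i$, giving $F(P_i)\cong P_i$. Recall that $\A_n\simeq\mod A$ for the path algebra $A=kQ$ of the linearly oriented type $A_n$ quiver $Q$ with arrows $\alpha_i\colon i\to i-1$; put $P=\bigoplus_i P_i$, a basic projective generator with $A\cong\End_{\A_n}(P)^{\op}$ and equivalence $\Hom_{\A_n}(P,-)\colon\A_n\iso\mod A$. Choosing isomorphisms $u_i\colon P_i\to F(P_i)$ and setting $u=\bigoplus_i u_i\colon P\to F(P)$, conjugation $a\mapsto u^{-1}\circ F(a)\circ u$ defines an algebra automorphism $\phi$ of $A$. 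By construction $\phi$ fixes each vertex idempotent $e_i$, so it can only rescale the arrows, $\phi(\alpha_i)=c_i\alpha_i$ with $c_i\in k^{\times}$.

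The hard part will be this final step: turning $F(P_i)\cong P_i$ into an actual natural isomorphism, and this is exactly where the rigidity of $A$ is used. Since $Q$ is a tree, every vertex-fixing automorphism of $A$ is inner: conjugation by the diagonal unit $d=\sum_i\lambda_i e_i$ scales $\alpha_i$ by $\lambda_{i-1}\lambda_i^{-1}$, and solving $\lambda_{i-1}\lambda_i^{-1}=c_i$ recursively realises $\phi$ as $\operatorname{conj}_d$. Replacing $u$ by $ud$ I may therefore assume $\phi=\id$, i.e.\ $F(a)\circ u=u\circ a$ for all $a\in\End_{\A_n}(P)$. Then the maps $\Psi_X\colon\Hom_{\A_n}(P,X)\to\Hom_{\A_n}(P,FX)$, $h\mapsto F(h)\circ u$, are bijections that are natural in $X$ and $A$-linear (the latter being precisely the identity $F(a)\circ u=u\circ a$). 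Hence $\Psi$ is a natural isomorphism $\Hom_{\A_n}(P,-)\cong\Hom_{\A_n}(P,F(-))$ of functors into $\mod A$; as $\Hom_{\A_n}(P,-)$ is an equivalence it reflects isomorphisms of functors, and I conclude $F\cong\Id$.
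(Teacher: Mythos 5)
Your proof is correct, but it takes a genuinely different route from the paper. The paper disposes of the lemma in three lines by citing standardness of $\A_n$ in the sense of Ringel \cite[2.4]{Ri1984}: $\A_n$ is equivalent to the mesh category of its Auslander--Reiten quiver, any self-equivalence induces the identity on that quiver and preserves the mesh ideal, hence is naturally isomorphic to the identity. You instead reprove the relevant rigidity by hand: the Ext-quiver of the simples is a directed path (extracted from Lemma~\ref{le:lin-alg}, with the small implicit step that a non-split extension of simples has indecomposable middle term), and its only self-map induced by an equivalence is the identity; this pins $F$ down on simples, hence on projective covers; Morita theory then converts the problem into an automorphism of the path algebra $A=kQ$ fixing the vertex idempotents, and since $Q$ is a tree with one-dimensional path spaces such an automorphism is inner, so after correcting $u$ by a diagonal unit you obtain the strict intertwining $F(a)\circ u=u\circ a$, which is exactly what is needed to produce the natural isomorphism $\Hom(P,-)\cong\Hom(P,F(-))$ and hence $F\cong\Id$. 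Both arguments rest on the same two rigidity phenomena --- no nontrivial symmetry of the underlying combinatorial data, plus the ability to absorb scalar twists --- and your inner-automorphism step is precisely what ``preserves the mesh ideal'' conceals in this special case. What the paper's approach buys is brevity and scope (standardness is a general structural fact about such representation-finite categories); what yours buys is a self-contained, elementary argument whose only inputs are Lemma~\ref{le:lin-alg}, Morita theory, and the tree-ness of the quiver, and which would generalise verbatim to any tree orientation.
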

\begin{proof}
  The category $\A_n$ is \emph{standard}, that is, equivalent to the
  mesh category given by its Auslander-Reiten quiver
  \cite[2.4]{Ri1984}. Clearly, an equivalence induces the identity on the
  Auslander-Reiten quiver and preserves the mesh ideal. From this the
  assertion follows.
\end{proof}

\begin{lem}\label{le:morphisms-linear}
  Let $m,n\in\bbN$. Then $\b_{mn}\circ\a_{mn}=\id$ and
  $\a_{mn}\circ\b_{mn}=\id$.
\end{lem}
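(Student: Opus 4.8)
The plan is to prove the two identities by reducing everything to the generators of $\Si$. Recall that $\a$ is functorial (as recorded just before the statement, $\a_{nn}(\id)=\id$ and $\a_{mp}(G\circ F)=\a_{np}(G)\circ\a_{mn}(F)$), and that $\b$ was constructed as the unique functorial extension of $\s_n^i\mapsto s_n^i$, $\d_n^i\mapsto d_n^i$ via Lemma~\ref{le:cat-gen}. Consequently $\a\circ\b$ is an endofunctor of $\Si$ that is the identity on objects, so to establish $\a_{mn}\circ\b_{mn}=\id$ it suffices, again by Lemma~\ref{le:cat-gen}, to verify the equality on the generating morphisms $\d_n^i$ and $\s_n^i$. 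The reverse identity $\b_{mn}\circ\a_{mn}=\id$ is the substantial one and is treated separately.

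For $\a\circ\b=\id$ I would compute $\a(\b(\s_n^i))=\a(s_n^i)$ and $\a(\b(\d_n^i))=\a(d_n^i)$ directly from Lemma~\ref{le:exact-fun}. By that lemma $\a(s_n^i)$ is the string morphism $\psi$ determined by $s_n^i(M_s)=M_{\psi(s)}$, and by Lemma~\ref{le:recollement} the functor $s_n^i$ annihilates exactly $M_i$ and carries every other indecomposable to an indecomposable. Tracing the simple objects $M_j=M_{s_j}$ through $s_n^i=(\bar\d_{n-1}^i)^*$ and comparing with the explicit formula \eqref{eq:standard} for $\s_n^i$ then yields $\psi=\s_n^i$; the analogous bookkeeping for $d_n^i$, using $\Im d_n^{i}=M_i^\perp$, gives $\a(d_n^i)=\d_n^i$. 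These are routine checks of the same flavour as the proof of Lemma~\ref{le:rel}.

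For $\b\circ\a=\id$, fix $F\in\Hom(\A_m,\A_n)$, set $\p=\a(F)$ and $G=\b(\p)$; the goal is $F\cong G$. Since $\a\circ\b=\id$ we have $\a(G)=\p=\a(F)$, so $F$ and $G$ annihilate the same indecomposables and agree on all others up to isomorphism; in particular $\Ker F=\Ker G=:\A'=\add\{M_j\mid \p(s_j)=*\}$. Writing $\p=\p''\circ\p'$ as in Lemma~\ref{le:epi-mono}, with $\p'$ a composite of degeneracies, the functor $\b(\p')$ is the corresponding composite of the Serre quotient functors $s_p^i$ of Lemma~\ref{le:recollement}; it therefore realises the quotient $q\colon\A_m\to\A_m/\A'$ and identifies $\A_m/\A'$ with a category $\A_{m'}$. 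By the homological factorisation, both $F$ and $G$ factor as $\bar F\circ q$ and $\bar G\circ q$ with $\bar F,\bar G\colon\A_{m'}\to\A_n$ fully faithful onto thick subcategories. As $\bar F$ and $\bar G$ agree on indecomposables up to isomorphism, they share a common essential image $\B$, so each corestricts to an equivalence $\A_{m'}\iso\B$, and composing one with a quasi-inverse of the other produces a self-equivalence of $\A_{m'}$.

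The main obstacle is precisely this final step: agreeing on objects up to isomorphism is strictly weaker than being naturally isomorphic as functors. This gap is closed by the rigidity established in Lemma~\ref{le:standard}, which asserts that every self-equivalence of $\A_{m'}$ is naturally isomorphic to the identity; it follows that $\bar G\cong\bar F$ and hence $G\cong F$. I expect the only other point requiring care to be the identification of the Serre quotient $\A_m/\A'$ with a single standard category $\A_{m'}$, but this is forced by iterating the recollements of Lemma~\ref{le:recollement} (equivalently, by verifying that the quotient still satisfies \emph{(Ab1)--(Ab6)} and invoking Lemma~\ref{le:uniserial}).
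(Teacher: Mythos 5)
Your proof is correct and takes essentially the same route as the paper's: the identity $\a_{mn}\circ\b_{mn}=\id$ is checked on the generators via Lemma~\ref{le:cat-gen}, and $\b_{mn}\circ\a_{mn}=\id$ is reduced to the fact that a functor admitting a homological factorisation is determined up to natural isomorphism by its values on indecomposables, with Lemma~\ref{le:standard} as the key rigidity input --- your factorisation through the Serre quotient and the resulting self-equivalence of $\A_{m'}$ is precisely the detail behind the paper's one-line citation of that lemma. One small repair: $\Ker F$ is not $\add\{M_j\mid\p(s_j)=*\}$ in general, but the Serre subcategory of all objects whose composition factors lie in that set; the two differ whenever two adjacent simples are annihilated (then $M_{(j,j+1)}\in\Ker F$ by exactness), and it is the Serre subcategory, not the $\add$, that you need for the quotient $\A_m/\A'$ to make sense and to be identified with $\A_{m'}$.
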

\begin{proof}
  The identity $\a_{mn}\circ\b_{mn}=\id$ is clear since this can be
  checked on the standard morphisms, thanks to Lemma~\ref{le:cat-gen}.
  We consider only $k$-linear exact functors $F\colon \A_m\to \A_n$
  that admit a homological factorisation. Such functors are
  determined, up to natural isomorphism, by the values $F(M_s)$ of the
  indecomposable objects; see Lemma~\ref{le:standard}. Thus $\a_{mn}$ is injective and
  $\b_{mn}\circ\a_{mn}=\id$ follows.
\end{proof}
  
Combining the above lemmas yields a combinatorial description of the
abelian categories that are specified in Lemma~\ref{le:uniserial}.

\begin{thm}\label{th:abelian}
Let $k$ be a field. The assignment $\Si_n\mapsto\A_n$
provides an equivalence between the category of
connected strings and the category of $k$-linear abelian  categories
satisfying \emph{(Ab1)--(Ab6)}.\qed
\end{thm}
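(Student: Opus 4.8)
The plan is to realise the assignment $\Si_n\mapsto\A_n$ as a fully faithful, essentially surjective functor into the category $\mathcal C$ whose objects are the $k$-linear abelian categories satisfying (Ab1)--(Ab6) and whose morphisms are the exact $k$-linear functors admitting a homological factorisation, taken up to natural isomorphism. First I would confirm that $\mathcal C$ is genuinely a category: the identity functor has kernel $0$ and so admits a homological factorisation, and one must check that composites of homological-factorisation functors again admit one, so that composition is defined. The compatibility needed for this bookkeeping is precisely the functoriality $\a_{nn}(\id)=\id$ and $\a_{mp}(G\circ F)=\a_{np}(G)\circ\a_{mn}(F)$ recorded after Lemma~\ref{le:exact-fun}, together with the fact (via Lemma~\ref{le:recollement}) that the generating functors $s_n^i$ and $d_n^i$ admit homological factorisations, the former because $\A_n/\add M_i\iso\A_{n-1}$ and the latter because the perpendicular categories ${M_i}^\perp$ and ${}^\perp M_i$ are thick.

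On this basis I would define the candidate functor $\Phi\colon\Si\to\mathcal C$ by $\Si_n\mapsto\A_n$ on objects and by $\p\mapsto\b_{mn}(\p)$ on morphisms $\p\colon\Si_m\to\Si_n$. That each $\b_{mn}(\p)$ is a morphism of $\mathcal C$ follows from the composition-closure above, since $\b_{mn}(\p)$ is built from the functors $s_n^i$ and $d_n^i$. Functoriality of $\Phi$ is then inherited from that of $\a$: as $\a_{nn}(\id)=\id$ and $\a$ respects composition, and as $\b_{mn}=\a_{mn}^{-1}$ by Lemma~\ref{le:morphisms-linear}, the assignment $\b$ preserves identities and composites.

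Full faithfulness is immediate from Lemma~\ref{le:morphisms-linear}, which exhibits $\a_{mn}$ and $\b_{mn}$ as mutually inverse bijections $\Hom(\A_m,\A_n)\leftrightarrows\Hom(\Si_m,\Si_n)$; hence $\Phi$ is bijective on every hom-set. For essential surjectivity I would invoke Lemma~\ref{le:uniserial}: any $\A$ satisfying (Ab1)--(Ab6) is equivalent to $\Rep([n]^\op,k)=\A_n$, where $n$ is the number of simple objects. Such an equivalence is exact with kernel $0$ and identifies $\A$ with all of $\A_n$, so it admits a homological factorisation and is an invertible morphism of $\mathcal C$; thus $\A\cong\Phi(\Si_n)$ in $\mathcal C$. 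Combining the three properties yields the asserted equivalence.

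The step requiring the most care is the well-definedness of $\mathcal C$ itself, that is, stability of homological factorisations under composition (so that $\mathcal C$ carries a composition law and $\Phi$ lands in it) and the matching of the invertible morphisms of $\mathcal C$ with the equivalences of abelian categories (so that Lemma~\ref{le:uniserial} delivers essential surjectivity). Both hinge on the hereditary hypothesis (Ab3), through which the quotient and perpendicular constructions of Lemma~\ref{le:recollement} are exact and the $\Ext^1$-perpendicular categories are thick. Once this is settled, the theorem is a formal consequence of Lemmas~\ref{le:uniserial} and \ref{le:morphisms-linear}.
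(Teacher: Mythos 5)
Your proposal is correct and takes essentially the same route as the paper, whose proof consists precisely of assembling the preceding lemmas: Lemma~\ref{le:uniserial} gives the bijection on objects (essential surjectivity) and Lemma~\ref{le:morphisms-linear} gives the bijections $\Hom(\A_m,\A_n)\leftrightarrows\Hom(\Si_m,\Si_n)$ (full faithfulness), with functoriality coming from the remarks after Lemma~\ref{le:exact-fun}. The additional care you take over closure of homological-factorisation functors under composition is exactly the point the paper leaves implicit in its definition of $\Hom(\A_m,\A_n)$ and in the composition rule $\a_{mp}(G\circ F)=\a_{np}(G)\circ\a_{mn}(F)$, so your write-up only makes the paper's tacit bookkeeping explicit.
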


We refer to the appendix for some further explanation of this result.

\section{Finite coproducts}

For a finite set of natural numbers $n_\a\in \bbN$ we define the
coproduct $\coprod_{\a}\Si_{n_\a}$ of strings by taking from the product
of the underlying sets all elements $s=(s_\a)$ such that $s_\a\neq *$
for at most one index $\a$ (that is, the coproduct of the pointed sets
$\Si_{n_\a}$). For $s=(s_\a)$ and $t=(t_\a)$ in
$\coprod_{\a}\Si_{n_\a}$ set
\[st:=(s_\a t_\a).\] For each index $\a$ and $0\le i<n_{\a}$ we denote
by $s_{\a,i}$ the \emph{simple string} $s$ given by $s_{\a}=s_i$.

Each coproduct $\coprod_{\a}\Si_{n_\a}$ comes with canonical
inclusions $i_\a\colon \Si_{n_\a}\to \coprod_{\a}\Si_{n_\a}$ and
projections $p_\a\colon \coprod_{\a}\Si_{n_\a}\to \Si_{n_\a}$
satisfying $p_\a\circ i_\a =\id$.

Morphisms $\coprod_{\a}\Si_{m_\a}\to \coprod_{\b}\Si_{n_\b}$ are by
definition maps $\p$ between the underlying sets such that the
composite $p_\b\circ\p\circ i_\a$ is a morphism
$\Si_{m_\a}\to\Si_{n_\b}$ for all $\a,\b$.

\begin{lem}
  There are canonical isomorphisms of pointed sets
\[\Hom\Big(\coprod_{\a}\Si_{m_\a},
  \coprod_{\b}\Si_{n_\b}\Big)\iso \prod_{\a}\Hom\Big(\Si_{m_\a},
  \coprod_{\b}\Si_{n_\b}\Big)\leftiso
  \prod_{\a}\coprod_{\b}\Hom(\Si_{m_\a}, \Si_{n_\b}).\]
\end{lem}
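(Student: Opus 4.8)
The plan is to treat the two displayed bijections separately: the left-hand one is formal, coming straight from the pointed-set definition of morphisms between coproducts, while the right-hand one rests on a single structural fact about morphisms of connected strings.

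For the left-hand bijection, observe that as a pointed set $\coprod_\alpha\Si_{m_\alpha}$ is the wedge $\{*\}\cup\bigsqcup_\alpha(\Si_{m_\alpha}\setminus\{*\})$, so a pointed map out of it is the same datum as a family of pointed maps $\varphi_\alpha\colon\Si_{m_\alpha}\to\coprod_\beta\Si_{n_\beta}$. (Any morphism $\varphi$ is automatically pointed, since $p_\beta\varphi i_\alpha$ preserves base points for all $\beta$ forces $\varphi(*)=*$.) By the definition of morphisms between coproducts, $\varphi$ is a morphism exactly when $p_\beta\circ\varphi\circ i_\alpha$ is a morphism of connected strings for all $\alpha,\beta$, and this holds if and only if each $\varphi_\alpha=\varphi\circ i_\alpha$ is a morphism into $\coprod_\beta\Si_{n_\beta}$ (unravelling the same definition for the single-index source $\Si_{m_\alpha}$). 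Thus restriction along the inclusions $i_\alpha$ gives the base-point preserving bijection $\Hom(\coprod_\alpha\Si_{m_\alpha},\coprod_\beta\Si_{n_\beta})\iso\prod_\alpha\Hom(\Si_{m_\alpha},\coprod_\beta\Si_{n_\beta})$.

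For the right-hand bijection it suffices, after taking the product over $\alpha$, to produce for a single source $\Si_m$ a base-point preserving bijection $\coprod_\beta\Hom(\Si_m,\Si_{n_\beta})\iso\Hom(\Si_m,\coprod_\beta\Si_{n_\beta})$; we may assume $m\ge 1$, the case $m=0$ being trivial. The candidate map sends a morphism $\psi_\beta\colon\Si_m\to\Si_{n_\beta}$ in the $\beta$-th summand to $i_\beta\circ\psi_\beta$, which is a morphism because $p_\beta\circ i_\beta=\id$ while $p_\gamma\circ i_\beta$ is the zero morphism for $\gamma\neq\beta$. The content is to invert it. Given a morphism $\psi\colon\Si_m\to\coprod_\beta\Si_{n_\beta}$, set $\psi_\beta=p_\beta\circ\psi$, a morphism of connected strings for each $\beta$. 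It suffices to show that at most one $\psi_\beta$ is nonzero, for then $\psi=i_\beta\circ\psi_\beta$ for that unique $\beta$, or $\psi$ is the zero morphism (the base point).

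The key step, and the only nonformal point, is the claim that a nonzero morphism $\chi\colon\Si_m\to\Si_n$ sends the maximal string $(0,m-1)$ to a nonzero string. Granting this, if $\psi_\beta$ and $\psi_{\beta'}$ were both nonzero with $\beta\neq\beta'$, then $\psi((0,m-1))$ would have nonzero coordinates in both the $\beta$- and $\beta'$-components, contradicting the defining property of the coproduct that elements have at most one nonzero coordinate. To prove the claim, factor $\chi=\chi''\circ\chi'$ as in Lemma~\ref{le:epi-mono}, with $\chi'$ a composite of degeneracy maps $\s^j$ and $\chi''$ injective. Each $\s^j$ sends a maximal string to a maximal string, so $\chi'((0,m-1))$ is the maximal string of the intermediate object $\Si_{m-r}$, where $r$ is the number of simple strings annihilated by $\chi$; since $\chi$ is nonzero we have $r<m$, so this maximal string is not $*$. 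Finally an injective morphism sends non-base-point elements to non-base-point elements (as $\chi''(*)=*$), whence $\chi((0,m-1))=\chi''(\chi'((0,m-1)))\neq *$. Taking the product over $\alpha$ then yields the right-hand bijection. The main obstacle is exactly this last claim that a nonzero morphism cannot collapse the maximal string; everything else is bookkeeping with the pointed-set coproduct.
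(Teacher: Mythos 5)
Your proof is correct and follows the same route as the paper: the first bijection is obtained by restricting along the canonical inclusions $i_\alpha$, and the second rests on the fact that every morphism $\Si_{m_\alpha}\to\coprod_\beta\Si_{n_\beta}$ factors through the inclusion of a single summand $\Si_{n_\beta}$. The paper merely asserts this factorization fact, whereas you supply a correct proof of it, using the epi-mono factorisation of Lemma~\ref{le:epi-mono} to show that a nonzero morphism out of $\Si_m$ cannot send the maximal string $(0,m-1)$ to the base point, which is exactly the connectedness statement the paper takes for granted.
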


\begin{proof}
  Isomorphisms of pointed sets are nothing but bijections, but it is
  important to take (co)products of pointed sets.  The first bijection
  is induced by the canonical inclusions
  $\Si_{m_\a}\to \coprod_{\a}\Si_{m_\a}$. The second bijection uses
  the fact that each morphism $\Si_{m_\a}\to \coprod_{\b}\Si_{n_\b}$
  factors through the inclusion $\Si_{n_\b}\to\coprod_{\b}\Si_{n_\b}$
  for one index $\b$.
\end{proof} 

We obtain the \emph{category of finite strings} which has as objects
the finite coproducts of connected strings.\footnote{We may consider
  the category $\HOM(\Si^\op,\Set_*)$ of functors $\Si^\op\to\Set_*$
  into the category of pointed sets, which is the analogue of the
  category $\HOM(\De^\op,\Set)$ of simplicial sets. Then the category of
  finite strings identifies via the embedding $X\mapsto\Hom(-,X)|_\Si$
  with the full subcategory of finite coproducts of representable
  functors in $\HOM(\Si^\op,\Set_*)$.}

\section{Non-crossing partitions}

We wish to describe the subobjects of $\Si_n$ in the category of
finite strings. This requires some preparations.

Let $S\subseteq\Si_n$. We call $S$ \emph{thick} if $*\in S$ and for any
pair $s,t\in S$ of non-zero strings we have $st\in S$, and moreover
$(s', t'-1), (t',s''), (s''+1,t'')\in S$ provided that
$s'\le t'\le s''\le t''$. We denote by $\Thick(S)$ the smallest thick
subset of $\Si_n$ containing $S$.

A set $S\subseteq \Si_{n}$ of non-zero strings
is called \emph{non-crossing} provided that $s,t\in S$ and
$s'\le t'\le s''\le t''$ implies $s=t$.

\begin{lem}
The assignment $S\mapsto\Thick(S)$ gives a bijection between the
non-crossing subsets and the thick subsets of $\Si_n$.
\end{lem}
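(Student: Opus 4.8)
The plan is to produce an explicit inverse. For a thick subset $T\subseteq\Si_n$ call a non-zero string $s\in T$ \emph{irreducible} if it cannot be written as a product $s=ab$ with $a,b\in T$ both non-zero, and let $\Irr(T)$ denote the set of irreducible elements of $T$. I will show that $S\mapsto\Thick(S)$ and $T\mapsto\Irr(T)$ are mutually inverse bijections between the non-crossing and the thick subsets of $\Si_n$.

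First I would check that $\Irr(T)$ is non-crossing and that $\Thick(\Irr(T))=T$. For the former, suppose distinct $s,t\in\Irr(T)$ satisfy $s'\le t'\le s''\le t''$; thickness puts the three pieces $(s',t'-1)$, $(t',s'')$, $(s''+1,t'')$ into $T$. If $s'<t'$ then $s=(s',t'-1)(t',s'')$ exhibits $s$ as a product of two non-zero elements of $T$, contradicting irreducibility; if $s'=t'$ then $s\neq t$ forces $s''<t''$, and $t=(t',s'')(s''+1,t'')$ is a similar forbidden factorisation. For $\Thick(\Irr(T))=T$, the inclusion $\subseteq$ is clear since $\Irr(T)\subseteq T$ and $T$ is thick, while $\supseteq$ follows by induction on $\ell(s)$: an irreducible $s$ lies in $\Irr(T)$, and otherwise $s=ab$ with $a,b\in T$ of strictly smaller length, so $a,b\in\Thick(\Irr(T))$ by induction and hence $s=ab\in\Thick(\Irr(T))$.

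The substantial point is the remaining identity $\Irr(\Thick(S))=S$ for non-crossing $S$, and for this I would first pin down $\Thick(S)$ explicitly. Let $P(S)$ be the closure of $S\cup\{*\}$ under the partial multiplication, i.e.\ the set of all concatenations $s_1\cdots s_k$ with $s_i\in S$ and $s_i''+1=s_{i+1}'$. Then $P(S)\subseteq\Thick(S)$ and $P(S)$ is closed under products by construction, so the crux is the claim that $P(S)$ is also closed under splitting, and therefore thick; this would give $\Thick(S)=P(S)$. Granting this, $\Irr(\Thick(S))=S$ is immediate: a concatenation $s_1\cdots s_k\in P(S)$ with $k\ge 2$ is visibly reducible, so $\Irr(\Thick(S))\subseteq S$; conversely, if some $s=(s',s'')\in S$ factored as $s=ab$ in $P(S)$ with $a,b$ non-zero, then writing $a=s_1\cdots s_j$ with $s_i\in S$, its first arc $s_1=(s',r)$, which shares the left endpoint $s'$ of $s$ and has $r<s''$, would form a crossing pair with $s$ in $S$, which is impossible.

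The main obstacle is precisely the claim that $P(S)$ is split-closed. The delicacy is that products of arcs of $S$ may well cross one another even though $S$ is non-crossing --- for example $s_0s_1=(0,1)$ and $s_1s_2=(1,2)$ cross --- so one genuinely has to verify that the three pieces of any crossing pair $u,v\in P(S)$ are again concatenations of arcs of $S$. I would prove this by exploiting the laminar structure of a non-crossing set: for crossing $u=(a,b)$ and $v=(c,d)$ I would track the position of the breakpoint $c$ within the chain of $S$-arcs decomposing $u$, and show that any endpoint coincidence obstructing the re-decomposition of a piece would force two distinct arcs of $S$ either to share an endpoint or to interleave, contradicting the non-crossing hypothesis. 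This case analysis, carried out symmetrically at the breakpoints $b$ and $c$, is the technical heart of the proof.
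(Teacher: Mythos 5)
Your plan is sound, and it is essentially an explicit version of the paper's own (one-sentence) proof: the paper simply asserts that the inverse takes a thick subset $T$ to the unique non-crossing $S\subseteq T$ with $\Thick(S)=T$, and your $\Irr(T)$ is exactly that subset, with existence and uniqueness actually argued. The parts you execute are correct: the two-case factorisation argument showing $\Irr(T)$ is non-crossing handles the degenerate pieces properly; $\Thick(\Irr(T))=T$ by induction on length is fine; and the deduction of $\Irr(\Thick(S))=S$ from the identity $\Thick(S)=P(S)$ (your concatenation closure) is fine, including the endpoint argument ruling out factorisations of elements of $S$. Your example $s_0s_1=(0,1)$, $s_1s_2=(1,2)$ rightly shows that $\Thick(S)=P(S)$ is not a formality.

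The weakness is that you stop exactly where the content of the lemma is concentrated: the claim that $P(S)$ is split-closed is described but not proved, so as written the proposal is a correct reduction rather than a complete proof. Your sketched strategy does work, and here is the step that closes it. Write crossing $u,v\in P(S)$ with $u'\le v'\le u''\le v''$ as chains $u=s_1\cdots s_p$, $v=t_1\cdots t_q$ of arcs of $S$, and let $s_i$ be the arc of the $u$-chain with $s_i'\le v'\le s_i''$. If $s_i\neq t_1$, then non-crossing forces $t_1''<s_i''$ (otherwise $s_i'\le t_1'\le s_i''\le t_1''$ gives $s_i=t_1$); but the $t$-chain covers $[v',v'']$ and $v'\le s_i''\le u''\le v''$, so some $t_j$ with $j\ge 2$ satisfies $t_j'\le s_i''\le t_j''$, and then $s_i'\le t_j'\le s_i''\le t_j''$ forces $s_i=t_j$ by non-crossing, which is absurd since $t_j'>t_1'=v'\ge s_i'$. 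Hence $s_i=t_1$, so in particular $v'=s_i'$; repeating the argument along the overlap $[v',u'']$ (the chains now start at the same point) shows $s_{i+1}=t_2,\ldots,s_p=t_{p-i+1}$. The three pieces $(u',v'-1)=s_1\cdots s_{i-1}$, $(v',u'')=s_i\cdots s_p$ and $(u''+1,v'')=t_{p-i+2}\cdots t_q$ are then visibly concatenations of arcs of $S$, i.e.\ lie in $P(S)$. With this inserted, your proof is complete and, in my view, more informative than the paper's, since it also yields the normal form $\Thick(S)=P(S)$ used later for $\NC(\Si_n)$.
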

\begin{proof}
  The inverse map takes a thick subset $T\subseteq\Si_n$ to the unique
  non-crossing subset $S\subseteq T$ with $\Thick(S)=T$.
\end{proof}

For non-crossing subsets $S,S'$ of $\Si_n$ we set
\[S\le S'\quad :\iff\quad \Thick(S)\subseteq\Thick(S').\] This yields
the structure of a poset. In fact, the non-crossing subsets form a
lattice since the thick subsets of $\Si_n$ are closed under
intersections. We denote this lattice by $\NC(\Si_n)$.

Let $n\in\bbN$. A \emph{partition} $P=(P_\a)$ of $[n]$ is given by
pairwise disjoint non-empty subsets $P_\a$ of $[n]$ such that
$\bigcup_\a P_\a=[n]$. Each partition is determined by the
corresponding set of strings $S(P)\subseteq\Si_{n-1}$, where by
definition $s=(s',s'')\in S(P)$ if for some $\a$ we have
$s',s''\in P_\a$ and $i\not\in P_\a$ for all $s'<i\le s''$. This is
clear since any part $P_\a=\{a_1<a_2<\cdots<a_r\}$ is determined by
the corresponding set of strings
$S_\a=\{(a_1,a_2-1),\ldots,(a_{r-1},a_r-1)\}$.

Call a subset $S\subseteq\Si_{n-1}$ of non-zero strings
\emph{partitioning} when for any $s,t\in S$ we have $s'=t'$ iff
$s''=t''$. In that case there is a unique partition $P=P(S)$ such that
$S(P)=S$. This yields a bijective correspondence between partitions
of $[n]$ and partitioning sets of strings in $\Si_{n-1}$.

A partition $P$ is \emph{non-crossing} provided given elements
$i<j < i' < j'$ with $i, i'$ in the same part and $j, j'$ in the same
part, then all elements belong to the same part.
The partitions of $[n]$ are partially ordered via \emph{refinement}, so
$P\le P'$ if any part of $P$ is contained in a part of $P'$. The
non-crossing partitions then form a lattice which is denoted by
$\NC(n)$; cf.\ \cite{Kr1972,Si2000}.

\begin{lem}\label{le:nc}
 There is a lattice isomorphism
  $\NC(\Si_{n-1})\iso\NC(n)$ which is given by  $S\mapsto P(S)$.
\end{lem}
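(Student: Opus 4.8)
The plan is to route the statement through the preceding bijection $S\mapsto\Thick(S)$ between non-crossing subsets and thick subsets of $\Si_{n-1}$. Since the order on $\NC(\Si_{n-1})$ is by definition $S\le S'\iff\Thick(S)\subseteq\Thick(S')$, that bijection is an order isomorphism of $\NC(\Si_{n-1})$ with the poset of thick subsets of $\Si_{n-1}$ under inclusion. I would then construct an explicit order isomorphism between thick subsets of $\Si_{n-1}$ and non-crossing partitions of $[n]$, and finally check that its composite with $S\mapsto\Thick(S)$ recovers the stated map $S\mapsto P(S)$. Throughout I adopt the convention that a degenerate expression $(z,z-1)$ denotes the zero string $*$.

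To a thick subset $T\subseteq\Si_{n-1}$ I attach a relation $\approx$ on $[n]$ by declaring, for $x<y$, that $x\approx y$ iff $(x,y-1)\in T$, together with $x\approx x$. The first task is to show $\approx$ is an equivalence relation: reflexivity and symmetry are built in, while transitivity is exactly where the thickness axioms enter. Given $x\approx y\approx z$, if the three points are in increasing order one concatenates the two witnessing strings; in each remaining ordering the two witnessing strings share an endpoint and satisfy $s'\le t'\le s''\le t''$, so the crossing-decomposition clause supplies the required string as one of $(s',t'-1)$, $(t',s'')$, $(s''+1,t'')$. This yields a partition $\pi(T)$ of $[n]$. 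I then show $\pi(T)$ is non-crossing: a crossing $i<j<i'<j'$ with $i\approx i'$ and $j\approx j'$ produces $(i,i'-1),(j,j'-1)\in T$ with $s'\le t'\le s''\le t''$, and the crossing-decomposition clause forces $(j,i'-1)\in T$, i.e.\ $j\approx i'$, so all four points lie in one block.

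Conversely I send a non-crossing partition $P$ to $\tau(P):=\{*\}\cup\{(x,y-1)\mid x<y,\ x\sim_P y\}$, and the substantive point is that $\tau(P)$ is thick. Containing $*$ and closure under concatenation follow from transitivity of $\sim_P$; the real content is closure under crossing-decomposition, and this is precisely where non-crossingness of $P$ is used. Given $(x,y-1),(x',y'-1)\in\tau(P)$ with $x\le x'<y\le y'$, if these came from different blocks one reads off a crossing $x<x'<y<y'$ of $P$; hence all four of $x,x',y,y'$ lie in a single block, whence the three pieces $(x,x'-1),(x',y-1),(y,y'-1)$ lie in $\tau(P)$ (any degenerate piece being $*$). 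I expect this closure check to be the main obstacle: it is the only place the combinatorial hypothesis is genuinely needed, and the boundary cases $x=x'$ and $y=y'$ must be treated separately.

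Finally I verify that $\pi$ and $\tau$ are mutually inverse and order-preserving. Mutual inversion unwinds the definitions: $\pi(\tau(P))=P$ since $x\approx y$ in $\tau(P)$ iff $x\sim_P y$, and $\tau(\pi(T))=T$ since every nonzero element of $T$ is a string $(x,y-1)$ lying in $T$ iff $x\approx y$. For the order, $T\subseteq T'$ translates directly into $\approx_T\subseteq\approx_{T'}$, i.e.\ $\pi(T)\le\pi(T')$ under refinement, and conversely; an order isomorphism of lattices is a lattice isomorphism. To match the statement I note that for a non-crossing (hence partitioning) subset $S$ one has $\Thick(S)=\tau(P(S))$: the inclusion $\subseteq$ because $\tau(P(S))$ is thick and contains $S$, and the reverse because every within-block string $(a_i,a_j-1)$ is a concatenation of strings of $S=S(P(S))$. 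Therefore $\pi(\Thick(S))=\pi(\tau(P(S)))=P(S)$, so the composite isomorphism is indeed $S\mapsto P(S)$.
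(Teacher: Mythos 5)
Your overall route is genuinely different from the paper's: the paper argues directly through the bijection between partitioning sets of strings and partitions of $[n]$, asserting that non-crossingness corresponds on both sides and then matching the two order relations, whereas you interpose the poset of thick subsets and construct an explicit inverse pair $\pi,\tau$ between thick subsets of $\Si_{n-1}$ and non-crossing partitions. Most of your argument is correct and in fact supplies details the paper leaves implicit (transitivity of $\approx$ via the decomposition clause, thickness of $\tau(P)$ for non-crossing $P$, the order comparison, the degenerate-string convention). But there is one genuine gap, located exactly at the step where you reconcile your construction with the map $S\mapsto P(S)$ named in the statement.

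To get $\Thick(S)\subseteq\tau(P(S))$ you invoke ``$\tau(P(S))$ is thick and contains $S$'', citing your third paragraph; but that result carries the hypothesis that the input partition is non-crossing, and you never prove that $P(S)$ is non-crossing when $S$ is. This cannot be recovered from your other results without circularity: the reverse inclusion $\tau(P(S))\subseteq\Thick(S)$ only shows that $P(S)$ refines the non-crossing partition $\pi(\Thick(S))$, and a refinement of a non-crossing partition need not be non-crossing (the partition $\{0,2\},\{1,3\}$ refines the one-block partition of $[4]$), while deducing it from $P(S)=\pi(\Thick(S))$ assumes the very identity being proved. This missing claim is precisely the assertion the paper's proof disposes of with ``it is clear that $S$ is non-crossing if and only if $P(S)$ is non-crossing'', and it is also what makes the statement's map $S\mapsto P(S)$ land in $\NC(n)$ at all. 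The gap is fillable in a few lines: suppose $i<j<i'<j'$ with $i,i'$ in a block $A$ of $P(S)$ and $j,j'$ in a block $B\neq A$. Since $j\notin A$ there is a consecutive pair $a<a^*$ in $A$ with $a<j<a^*\le i'$, so $(a,a^*-1)\in S$; since $i'\notin B$ there is a consecutive pair $b<b^*$ in $B$ with $j\le b<i'<b^*$, so $(b,b^*-1)\in S$. If $b<a^*$, then $a\le b\le a^*-1\le b^*-1$ exhibits two distinct crossing strings of $S$; if $a^*\le b$, replace the first pair by the consecutive pair $c<c^*$ of $A$ with $c\le b<c^*\le i'$, and $(c,c^*-1)$, $(b,b^*-1)$ cross. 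Either way you contradict that $S$ is non-crossing, so $A=B$ and $P(S)$ is non-crossing. With this lemma inserted before your final paragraph, your proof is complete.
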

\begin{proof}
  It is clear that $S\subseteq\Si_{n-1}$ is non-crossing if and only
  if $P(S)$ is non-crossing. Let $S\le S'$. This means any $s\in S$ can
  be written as $s=s_1s_2\cdots s_r$ with  $s_1,\ldots,s_r$ in $S'$. On the
  other hand, $P\le P'$ means that for any part
  $P_\a=\{a_1<a_2<\cdots<a_u\}$ of $P$ and $t=(a_i,a_{i+1}-1)\in S(P)$, there is
  a part of $P'$ containing $a_i,a_{i+1}$ and therefore $t=t_1t_2\cdots t_r$
  with $t_1,\ldots,t_r$ in $S(P')$. Thus $S\le S'$ if and only if $P(S)\le P(S')$.
\end{proof}

We say that two monomorphisms
$X_1\rightarrowtail X$ and $X_2\rightarrowtail X$ are
\emph{equivalent}
if there exists an isomorphism $X_1\to X_2$ making the following
diagram commutative.
\begin{equation*}
\begin{tikzcd}[row sep=scriptsize, column sep=scriptsize]
X_1\arrow[tail,rd]\arrow[rr]&&X_2\arrow[tail,ld]\\&X
\end{tikzcd}
\end{equation*}
An equivalence class of monomorphisms into $X$ is called a
\emph{subobject} of $X$. Given subobjects $X_1\rightarrowtail X$
and $X_2\rightarrowtail X$, we write $X_1\le X_2$ if there is a
morphism $X_1\to X_2$ making the above diagram commutative; this
yields a partial order.

For a monomorphism $\p\colon X\to\Si_n$ in the category of finite
strings we set 
\[S(\p):=\{\p(s)\mid s\in X \text{ simple}\}.\]

\begin{lem}\label{le:exist}
  Let $S\subseteq\Si_n$ be non-crossing. Then there exists a
  monomorphism $\p\colon X\to\Si_n$ such that $S(\p)=S$.
\end{lem}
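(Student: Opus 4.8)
The plan is to build $X$ directly from the combinatorial shape of $S$, reading each basic string $(s',s'')\in S$ as the arc joining the boundary points $s'$ and $s''+1$ in $\{0,1,\dots,n\}$. Under this dictionary two distinct strings $s,t$ are non-crossing precisely when they share neither a left endpoint ($s'=t'$) nor a right endpoint ($s''=t''$): either coincidence would force the forbidden inequality $s'\le t'\le s''\le t''$. Consequently each boundary point is the left endpoint of at most one arc of $S$ and the right endpoint of at most one arc of $S$.

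First I would extract from this the decomposition of $S$ into consecutive chains. Since at each point at most one arc starts and at most one ends, the arcs of $S$ assemble into maximal chains: for each chain there is a strictly increasing sequence $a_0<a_1<\dots<a_k$ in $\{0,\dots,n\}$ whose arcs $(a_{j},a_{j+1}-1)$, $0\le j<k$, lie in $S$, and consecutive arcs concatenate, $(a_{j},a_{j+1}-1)(a_{j+1},a_{j+2}-1)=(a_{j},a_{j+2}-1)$. A point carrying both an incoming and an outgoing arc is interior to a single chain, so distinct maximal chains occupy disjoint sets of boundary points. This step, isolating the chains and verifying that they are vertex-disjoint, is where the non-crossing hypothesis does the real work, and I expect it to be the main obstacle.

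Next, for a single chain with data $a_0<\dots<a_k$ I would define $\p_{\mathrm{ch}}\colon\Si_k\to\Si_n$ on basic strings by $(i,j)\mapsto(a_i,a_{j+1}-1)$ and $*\mapsto *$. A short check against the concatenation rule shows this is multiplicative and that it satisfies \eqref{eq:mor}; it is injective because the $a_i$ are strictly increasing, and no simple string is sent to $*$. Hence Lemma~\ref{le:epi-mono} produces no degeneracy factor and $\p_{\mathrm{ch}}$ is a monomorphism, with its simple strings $s_0,\dots,s_{k-1}$ sent exactly to the arcs of the chain. A chain of length one is simply a morphism $\Si_1\to\Si_n$ selecting a single arc.

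Finally I would set $X=\coprod\Si_{k}$, one connected component per maximal chain, and let $\p$ be the induced map whose component $p\circ\p\circ i$ at each summand is the corresponding $\p_{\mathrm{ch}}$; by the component-wise definition of morphisms in the category of finite strings this is a morphism. Since the chains occupy disjoint boundary points, their arcs are pairwise distinct, so $\p$ is injective on underlying pointed sets and therefore a monomorphism. By construction the images of the simple strings of $X$ are exactly the arcs of all the chains, that is $S(\p)=S$, as required.
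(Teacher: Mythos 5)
Your proof is correct and is essentially the paper's own argument written out in full detail: the paper forms the equivalence classes of $S$ under the relation generated by $st\neq *$ --- these are exactly your maximal chains, since $st\neq *$ means the corresponding arcs share an endpoint --- and takes the morphism $\coprod_\a\Si_{n_\a}\to\Si_n$ identifying the simple strings $s_{\a,i}$ with the elements of each class, which is precisely your chain-wise map $(i,j)\mapsto(a_i,a_{j+1}-1)$ assembled over the coproduct. One harmless slip: your ``precisely when'' characterisation of non-crossing pairs is false --- the strings $(0,2)$ and $(1,3)$ share neither a left nor a right endpoint yet cross --- but you only ever use the valid implication (a shared endpoint forces a crossing, hence each point carries at most one outgoing and one incoming arc), so the argument stands.
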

\begin{proof}
  Consider the equivalence relation on $S$ generated by $s\sim t$ when
  $st\neq\ast$. This yields a partition $S=\bigcup_\a S_\a$ and we set
  $n_\a:=\card S_\a$. Using the fact that $S$ is non-crossing, there
  is a unique morphism $\p\colon \coprod_{\a}\Si_{n_\a}\to\Si_n$ which
  identifies the simple strings $s_{\a,i}$ with the elements in $S_\a$. Thus
  $S=S(\p)$.
\end{proof}

\begin{lem}\label{le:mono-inj}
  A morphism $\p\colon X\to\Si_n$ is a monomorphism if and only if it
  is given by an injective map.
\end{lem}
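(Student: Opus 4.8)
The plan is to prove the two implications separately: the direction ``injective $\Rightarrow$ monomorphism'' is purely formal, while the converse requires producing enough test morphisms out of the simple string $\Si_1$.

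First I would dispatch the easy direction. By definition a morphism in the category of finite strings is a map of the underlying pointed sets subject to extra conditions, so equality of morphisms is equality of the underlying maps. Hence if $\p$ is injective and $\psi_1,\psi_2\colon Y\to X$ satisfy $\p\circ\psi_1=\p\circ\psi_2$, then for every $y\in Y$ we have $\p(\psi_1(y))=\p(\psi_2(y))$, and injectivity of $\p$ forces $\psi_1(y)=\psi_2(y)$. Thus $\psi_1=\psi_2$ and $\p$ is a monomorphism.

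For the converse I would argue by contraposition, using $\Si_1=\{*,s_0\}$ with $s_0=(0,0)$ as a probe. The key observation is that \emph{every} element $u\in X$ is realised as $u=\psi(s_0)$ for a (unique) morphism $\psi\colon\Si_1\to X$. Indeed, in $\Si_1$ one has $s_0 s_0=*$, and for any string $u=(u_\a)$ in a coproduct $X=\coprod_\a\Si_{m_\a}$ the componentwise product $uu=(u_\a u_\a)$ equals $*$, since a basic string never concatenates with itself (for $u_\a=(u',u'')$ one would need $u''+1=u'$, contradicting $u'\le u''$). Consequently the assignment $s_0\mapsto u$, $*\mapsto *$ respects multiplication, the side condition in \eqref{eq:mor} being vacuous because $s_0 s_0=*$; composing with each projection $p_\b$ gives a morphism $\Si_1\to\Si_{m_\b}$, so the assignment defines a morphism $\psi_u\colon\Si_1\to X$. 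Distinct elements $u\neq u'$ of $X$ yield distinct morphisms $\psi_u\neq\psi_{u'}$. Now if $\p$ is not injective, choose $u\neq u'$ in $X$ with $\p(u)=\p(u')$; then $\p\circ\psi_u$ and $\p\circ\psi_{u'}$ both send $s_0\mapsto\p(u)=\p(u')$ and agree on $*$, so $\p\circ\psi_u=\p\circ\psi_{u'}$ while $\psi_u\neq\psi_{u'}$. Hence $\p$ is not a monomorphism, which is the desired contrapositive.

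The only delicate point, and the step I would check most carefully, is that $\Si_1$ is an adequate test object, namely that arbitrary elements of $X$ — in any coproduct summand, and including the basepoint $*$ — occur as images of $s_0$. This reduces entirely to the identity $uu=*$ valid for every string $u$, which makes the assignment $s_0\mapsto u$ automatically multiplicative; the uniform treatment of $*$ then also handles the case where $\p$ sends some non-zero string to $*$.
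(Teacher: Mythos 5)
Your proof is correct, and it takes a genuinely different route from the paper's. Your key idea is that $\Si_1$ serves as a probe object: since $s_0s_0=*$ in $\Si_1$, and since $uu=*$ for every string $u$ (for $u=(u',u'')$ the concatenation $uu\neq *$ would require $u''+1=u'$, contradicting $u'\le u''$), every pointed assignment $s_0\mapsto u$ satisfies \eqref{eq:mor} in each component, so that $\Hom(\Si_1,X)\cong X$ as pointed sets. Non-injectivity of $\p$ then immediately yields two distinct morphisms $\Si_1\to X$ equalised by $\p$, and the other direction is formal because morphisms are maps and equality of morphisms is equality of maps. The paper instead splits the hard direction in two: for connected $X=\Si_m$ it invokes the canonical decomposition of Lemma~\ref{le:decomp} (a monomorphism admits no degeneracy factor $\s$, hence is a composite of the injective maps $\d$), and for $X$ with several components it proves $\Im\p_i\cap\Im\p_j=\{*\}$ for $i\neq j$ by observing that this intersection is a thick subset, writing it as $\Thick(S)$ with $S$ non-crossing, and testing $\p$ against the monomorphism supplied by Lemma~\ref{le:exist}. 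Your argument is shorter, uniform in the number of components, and independent of the thick/non-crossing machinery; the paper's route is heavier but runs along the same lines as the subsequent Lemma~\ref{le:mono} and Theorem~\ref{th:sub}, where that machinery is needed anyway. Since the later results rely only on the statement of the lemma (for instance, $\Im\p_i\cap\Im\p_j=\{*\}$ in the proof of Lemma~\ref{le:mono} follows from injectivity alone), your proof could be substituted without affecting anything downstream.
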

\begin{proof}
  Clearly, any injective map yields a monomorphism. Thus we suppose
  that $\p$ is a monomorphism and need to show that $\p$ is given by
  an injective map.
 
  Let $X=\coprod_{i=1}^r\Si_{n_i}$. The canonical decomposition of a
  morphism $\Si_{n_i}\to\Si_n$ from Lemma~\ref{le:decomp} yields the
  case $r=1$.  For the general case we may assume that $r=2$. For each
  index $i$ the restricted morphism $\p_i\colon\Si_{n_i}\to \Si_n$ is
  given by an injective map by the first case. Then each subset
  $\Im\p_i$ is thick, and $\Im\p_1\cap\Im\p_2=\Thick(S)$ for some
  non-crossing $S\subseteq\Si_n$. Let $\psi\colon Y\to \Si_n$ be the
  corresponding morphism with $S(\psi)=S$ which exists by
  Lemma~\ref{le:exist}. Clearly, $\psi$ factors through each $\p_i$
  via a morphism $\psi_i\colon Y\to \Si_{n_i}$. We obtain a diagram
\[\begin{tikzcd}[column sep=large]
 Y\arrow[yshift=.75ex]{r}{\psi_1}\arrow[swap,yshift=-.75ex]{r}{\psi_2}&X\arrow{r}{\p}&\Si_n
\end{tikzcd}\] where both composites equal $\psi$.  Thus
$\psi_1=\psi_2$ and therefore $\Im\p_1\cap\Im\p_2=\{*\}$.  We conclude
that $\p$ is given by an injective map.
\end{proof}

\begin{lem}\label{le:mono}
  Let $\p\colon X\to\Si_n$ be a monomorphism.  Then the set $S(\p)$ is
  non-crossing and we have $\Thick(S(\p))=\Im\p$. Moreover, $\p$
  factors through a monomorphism $\p'\colon X'\to\Si_n$ if and only if
  $S(\p)\le S(\p')$.
\end{lem}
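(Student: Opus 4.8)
The plan is to reduce the statement to the combinatorics of the individual components and then apply the dictionary between thick and non-crossing subsets. By Lemma~\ref{le:mono-inj} the map $\p$ is injective; write $X=\coprod_{i=1}^{r}\Si_{n_i}$ and let $\p_i\colon\Si_{n_i}\to\Si_n$ denote the restriction to the $i$-th summand, with $S_i:=\{\p(s)\mid s\in\Si_{n_i}\text{ simple}\}$, so that $S(\p)=\bigcup_i S_i$ and $\Im\p=\bigcup_i\Im\p_i$. Each $\p_i$ is an injective morphism of connected strings, hence by Lemma~\ref{le:decomp} a composite of face maps, and in particular it sends non-zero strings to non-zero strings. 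Because $\p_i(s_0)\p_i(s_1)\cdots\p_i(s_{n_i-1})=\p_i\big((0,n_i-1)\big)$ is a single non-zero string, the images $\p_i(s_0),\dots,\p_i(s_{n_i-1})$ concatenate consecutively; thus they tile an interval, are pairwise non-crossing, and the same computation shows $\Thick(S_i)=\Im\p_i$.

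Next I would handle two distinct summands. Here the input is the conclusion drawn inside the proof of Lemma~\ref{le:mono-inj}, namely that for $i\ne j$ the subsets $\Im\p_i$ and $\Im\p_j$ are thick and satisfy $\Im\p_i\cap\Im\p_j=\{*\}$. The aim is to deduce that no $s\in S_i$ crosses any $t\in S_j$: an assumed crossing $s'\le t'\le s''\le t''$ with $s\ne t$ should, after applying the thick-closure relations, manufacture a common non-zero string in the intersection $\Im\p_i\cap\Im\p_j$, contradicting its triviality. Granting this, $S(\p)$ is non-crossing and the intervals tiled by the summands form a laminar family, so that cross-summand products vanish and no new crossing relations appear; hence $\Im\p=\bigcup_i\Thick(S_i)$ is again thick. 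Since $S(\p)\subseteq\Im\p$ with $\Im\p$ thick, while each $\Thick(S_i)\subseteq\Thick(S(\p))$, this gives $\Thick(S(\p))=\Im\p$.

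For the factorisation I would argue both implications directly from the identity $\Thick(S(\p))=\Im\p$. If $\p=\p'\circ\psi$ for a monomorphism $\p'\colon X'\to\Si_n$, then $\Im\p\subseteq\Im\p'$, whence $\Thick(S(\p))\subseteq\Thick(S(\p'))$, that is $S(\p)\le S(\p')$. Conversely, if $S(\p)\le S(\p')$ then $\Im\p=\Thick(S(\p))\subseteq\Thick(S(\p'))=\Im\p'$; since $\p'$ is injective by Lemma~\ref{le:mono-inj}, the set map $\psi:=(\p')^{-1}\circ\p\colon X\to X'$ is well defined, and I would check that it is a morphism of finite strings summand by summand, using that $\p$ and $\p'$ are multiplicative on each connected summand and send simples to the tiling strings found in the first step. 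Then $\p=\p'\circ\psi$ exhibits the desired factorisation.

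I expect the middle step to be the genuine obstacle. The within-summand analysis and the formal manipulations with $\Thick$ are routine, but excluding a crossing between two different summands is precisely where the monomorphism hypothesis must be used in an essential way. I would caution that the triviality $\Im\p_i\cap\Im\p_j=\{*\}$ does not by itself forbid a crossing pair—two overlapping strings can have thick closures meeting only in $\{*\}$—so the decisive point is to extract from an assumed crossing a non-zero string that is forced to lie in both images, and pinning down this implication is where I would concentrate the effort.
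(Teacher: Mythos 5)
Your proposal retraces the paper's own proof almost step for step: the connected case via the observation that the simple strings of a summand tile an interval (non-crossing, with thick closure equal to the image), the reduction of the general case to pairs of summands via Lemma~\ref{le:mono-inj}, and the factorisation statement via the chain $S(\p)\le S(\p')\iff\Thick(S(\p))\subseteq\Thick(S(\p'))\iff\Im\p\subseteq\Im\p'\iff\p$ factors through $\p'$. Those parts are correct and match the paper. The one step you decline to prove---that a string in $S_i$ cannot cross a string in $S_j$ for $i\neq j$---is exactly the step the paper settles in a single sentence, namely that the general case ``follows since $\Im\p_i\cap\Im\p_j=\{*\}$''. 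So the honest summary is: your proof is the paper's proof with its shortest sentence left open.

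Your scepticism about that sentence is justified, and this is where the gap is genuine---for you and, as far as the stated justification goes, for the paper as well. Disjointness of the thick images does not forbid a crossing: the two morphisms $\Si_1\to\Si_4$ sending the simple string to $(0,2)$ and to $(1,3)$ have thick images $\{*,(0,2)\}$ and $\{*,(1,3)\}$ meeting only in $\{*\}$, yet $(0,2)$ and $(1,3)$ cross, and the union of the two images is not thick. Moreover, mono-ness buys you nothing beyond injectivity of the underlying map (composition is composition of maps), and condition \eqref{eq:mor} only sees concatenation, never crossing, so no manipulation of the monomorphism hypothesis together with $\Im\p_i\cap\Im\p_j=\{*\}$ can manufacture the contradiction you are hoping for. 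What actually has to be invoked is a cross-component constraint on what a morphism out of a coproduct is---the string-side shadow of the fact that a functor admitting a homological factorisation kills all $\Hom$ and $\Ext^1$ between images of simples from different components (Theorem~\ref{th:uniserial}); only with that input does your laminar-family argument close. Note also a secondary slip in your middle step: even granting non-crossing, ``cross-summand products vanish'' does not follow from laminarity (two adjacent intervals such as the images $(0,1)$ and $(2,3)$ concatenate), and this non-concatenation is likewise needed for $\Thick(S(\p))=\Im\p$; it too must come from the morphism condition across components, not from the geometry of the intervals. In short: your diagnosis of where the difficulty sits, and of why disjointness is insufficient, is exactly right, but the proposal as written does not prove the lemma, and it cannot be completed along the route that both you and the paper's one-line justification indicate.
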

\begin{proof}
 Let $X=\coprod_{i=1}^r\Si_{n_i}$ and $m\in\bbN$.  The set $S$ of simple
  strings in $\Si_m$ is non-crossing and we have
  $\Thick(S)=\Si_m$. This property is preserved under a monomorphism
  $\Si_m\to\Si_n$ and yields the case $r=1$.  The general case follows
  since  the  restrictions $\p_i\colon\Si_{n_i}\to \Si_n$  satisfy
  $\Im\p_i\cap\Im\p_j=\{*\}$ for $i\neq j$, by Lemma~\ref{le:mono-inj}.

  For a monomorphism $\p'\colon X'\to\Si_n$ we have
  \begin{align*}
    S(\p)\le S(\p') &\iff \Thick(S(\p))\subseteq \Thick(S(\p'))\\
                    & \iff \Im\p\subseteq\Im\p'\\
    &\iff \p\text{ factors through }\p'.\qedhere
    \end{align*}
  \end{proof}

\begin{thm}\label{th:sub}
  Let $n\in\bbN$. The subobjects of $\Si_n$ in the category of finite
  strings form a lattice which is canonically isomorphic to the
  lattice of non-crossing partitions $\NC(n+1)$. The isomorphism sends
  a monomorphism $\p\colon X\to\Si_n$ to $P(S(\p))$.
\end{thm}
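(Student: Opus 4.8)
The plan is to realise the isomorphism as a composite of two order isomorphisms: first identify the poset of subobjects of $\Si_n$ with the lattice $\NC(\Si_n)$ of non-crossing subsets via $[\p]\mapsto S(\p)$, and then apply the lattice isomorphism $\NC(\Si_n)\iso\NC(n+1)$ furnished by Lemma~\ref{le:nc} (with $n$ replaced by $n+1$). The composite sends $[\p]\mapsto S(\p)\mapsto P(S(\p))$, which is exactly the stated assignment.

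First I would check that $\p\mapsto S(\p)$ descends to a well-defined bijection from subobjects to non-crossing subsets of $\Si_n$. Surjectivity is Lemma~\ref{le:exist}: every non-crossing $S$ is $S(\p)$ for some monomorphism $\p$. That $S(\p)$ is indeed non-crossing, and that $\Thick(S(\p))=\Im\p$, is Lemma~\ref{le:mono}. For well-definedness on equivalence classes and for injectivity I would argue that two monomorphisms $\p_1,\p_2$ are equivalent precisely when each factors through the other; by Lemma~\ref{le:mono} this is equivalent to $S(\p_1)\le S(\p_2)$ and $S(\p_2)\le S(\p_1)$ in $\NC(\Si_n)$, i.e.\ $\Thick(S(\p_1))=\Thick(S(\p_2))$, which by the bijection between non-crossing and thick subsets forces $S(\p_1)=S(\p_2)$.

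Next I would observe that this bijection is automatically an isomorphism of posets: the order on subobjects is given by factorisation, and the equivalence ``$\p_1$ factors through $\p_2$ $\iff$ $S(\p_1)\le S(\p_2)$'' in Lemma~\ref{le:mono} shows the bijection preserves and reflects the order. Since $\NC(\Si_n)$ is already a lattice (the thick subsets being closed under intersection), the subobjects inherit a lattice structure along this isomorphism, and composing with Lemma~\ref{le:nc} yields the desired isomorphism onto $\NC(n+1)$. Almost all the substance lives in the preceding lemmas, so the only genuine task here is the bookkeeping of equivalence classes; the point to watch is that equality of subobjects is the mutual-factorisation condition rather than equality of the domains $X$, and it is precisely Lemma~\ref{le:mono} that converts this into equality of the associated non-crossing sets. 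In particular no separate verification that the subobjects form a lattice is required, as this is transported from $\NC(\Si_n)$.
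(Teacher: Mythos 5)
Your proposal is correct and follows essentially the same route as the paper's proof: both rest on Lemma~\ref{le:exist} for surjectivity, Lemma~\ref{le:mono} for the factorisation--order correspondence and injectivity, and Lemma~\ref{le:nc} for the passage to $\NC(n+1)$. Your version merely makes explicit the intermediate lattice $\NC(\Si_n)$ and the equivalence-class bookkeeping (mutual factorisation of monomorphisms forcing equality of the associated non-crossing sets), which the paper compresses into a single sentence.
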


This result could be deduced from \cite{HK2016,IT2009}, using the
correspondence between strings and representations from
Theorem~\ref{th:abelian}, which identifies thick subsets of
$\Si_n$ with thick subcategories of $\A_n$. We refer to
\cite[\S4]{Ri2016} for a detailed exposition. The following is a
direct proof.
  
\begin{proof}
  The assignment $\p\mapsto P(S(\p))$ gives a well defined map from the
  poset of subobjects of $\Si_n$ to $\NC(n+1)$ by Lemmas~\ref{le:nc}
  and \ref{le:mono}. In fact, the map is injective and $\p$ factors
  through a monomorphism $\p'$ if and only if $P(S(\p))\le P(S(\p'))$.  Thus it
  remains to show surjectivity. Let $P\in\NC(n+1)$ and set
  $S=S(P)$. Then there is a morphism
  $\p\colon \coprod_{\a}\Si_{n_\a}\to\Si_n$ satisfying $S=S(\p)$ by
  Lemma~\ref{le:exist}. Thus $P=P(S(\p))$.
\end{proof}

\section{Cyclic strings}

We enlarge the category of finite strings and add cyclic strings as
follows.  Let $\Si_\bbZ$ denote the set of all basic strings. There is
a natural action of the group of integers given by
\[*^z:=* \qquad\text{and}\qquad s^z:=(s'+z,s''+z)\qquad \text{for }s=(s',s''),\, z\in\bbZ.\]
For $n>0$ the \emph{cyclic string} of length $n$ is the set of orbits
with respect to the action of the subgroup $(n)=n\bbZ$. Thus
\[\tilde\Si_n:=\{s^{(n)}\mid s\in\Si_\bbZ\}=\{s^{ni}\mid
  s\in\Si_\bbZ,\, i\in\bbZ\}\]
with multiplication given by \[s^{(n)}t^{(n)}:=u^{(n)}\]
where $u=*$ except when there is a pair of integers $i,j$ such that
$s^{ni}t^{nj}=u\neq *$. We set $\tilde \Si_0:= \{*\}$.

A morphism $\p\colon\tilde\Si_m\to\tilde\Si_n$ is by definition a map
satisfying \eqref{eq:mor}. We define \emph{standard morphisms} which
are given by their values on simple strings as in \eqref{eq:standard}.
Let $n\ge 1$. Then the morphism
  \[\tilde\d^i_n\colon \tilde\Si_{n-1}\lto\tilde\Si_n \qquad (0\le i < n)\] 
  is given by the injective map such that $s^{(n)}_{i-1}$ and
  $s^{(n)}_{i}$ are not in its image, and the
  morphism
  \[\tilde\s^i_n\colon \tilde\Si_{n}\lto\tilde\Si_{n-1} \qquad (0\le i
    <n)\] is given by the surjective map such that $s^{(n)}_i$ is sent
  to the zero string. The \emph{cyclic
    permutation}
  \[\t^i_n\colon\tilde\Si_n\lto\tilde\Si_n \qquad (0\le i
    <n)\]  is given by
  $s^{(n)}\mapsto (s^i)^{(n)}$.

\begin{lem}\label{le:decomp-cyc}
  The standard morphisms satisfy the identities \eqref{eq:rel}, and
  every morphism $\p\colon\tilde\Si_m\to\tilde\Si_n$ admits a unique
  decomposition
\begin{equation*}\label{eq:decomp-cyc}
    \p=\d_n^{i_u}\circ\d_{n-1}^{i_{u-1}}\circ\cdots \circ
    \d_{n-u}^{i_{0}}\circ\s_{m-v}^{j_v}\circ\cdots \circ\s_{m-1}^{j_1} \circ\s_{m}^{j_0}\circ\t_m^k
  \end{equation*}
 with $0\le i_0<\cdots< i_u< n$, $0\le j_v<\cdots< j_0< m$, $0\le
 k<m$, and
 $n-u=m-v$.
\end{lem}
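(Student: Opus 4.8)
The plan is to mirror the proof strategy already used for the non-cyclic case in Lemmas~\ref{le:epi-mono}, \ref{le:decomp}, and \ref{le:cat-gen}, but carefully accounting for the new rotational symmetry captured by the cyclic permutations $\t_m^k$. First I would verify that the standard morphisms $\tilde\d_n^i$, $\tilde\s_n^i$ satisfy the identities \eqref{eq:rel}; as in Lemma~\ref{le:rel}, this is a routine check by tracing the images of the simple strings, since the formulas \eqref{eq:standard} describe the action locally and the cyclic identifications do not interfere with the combinatorics of adjacent indices. I would also record the interaction of the $\t_m^k$ with the faces and degeneracies, namely that conjugating or precomposing a standard morphism by a cyclic permutation again yields a standard morphism (up to relabelling of the index $k$), since $\t$ simply rotates which simple string is sent where.

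Next I would establish existence of the decomposition. Given $\p\colon\tilde\Si_m\to\tilde\Si_n$, I would first peel off a cyclic permutation $\t_m^k$ so that the composite $\p\circ(\t_m^k)^{-1}$ becomes \emph{normalised} in the sense that it respects a chosen basepoint among the simple strings; this uses the $\bbZ/m$-worth of freedom in choosing a representative lift of the cyclic data. Once this rotation is fixed, the remaining map behaves exactly like a morphism of connected strings: the set of indices $j$ with $\p(s_j^{(m)})=*$ determines the degeneracy part $\s_{m-v}^{j_v}\circ\cdots\circ\s_m^{j_0}$ via the epi-mono factorisation argument of Lemma~\ref{le:epi-mono}, and the complementary indices determine the face part $\d_n^{i_u}\circ\cdots\circ\d_{n-u}^{i_0}$ exactly as in Lemma~\ref{le:decomp}. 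The constraint $n-u=m-v$ is forced by counting the non-zero simple strings surviving under $\p$.

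For uniqueness I would argue as in Lemma~\ref{le:cat-gen}: the indices $i_0<\cdots<i_u$, $j_v<\cdots<j_0$ are intrinsically characterised by $\p$ (the $j$'s are where simples map to $*$, and the $i$'s record which simples of $\tilde\Si_n$ lie outside the image), so the face and degeneracy data are rigid. The one genuinely new point is that the cyclic exponent $k$ is then determined modulo $m$ by where a single distinguished non-degenerate simple string is sent, since rotating by $\t_m^k$ shifts all the $s_j^{(m)}$ coherently; this pins down $k\in\{0,\ldots,m-1\}$ uniquely.

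The main obstacle I anticipate is precisely the bookkeeping around the cyclic permutation: unlike in the linear case there is no canonical ``leftmost'' simple string, so I must make a definite choice of how to normalise $\p$ against the $\bbZ/m$-action and then show both that such a normalisation exists and that the resulting $k$ is well defined. Concretely, the subtlety is to confirm that after factoring out $\t_m^k$ the degeneracy indices can genuinely be arranged in the strictly decreasing order $j_v<\cdots<j_0<m$ without ``wrapping around'' the cycle, and that no nontrivial cyclic permutation can be absorbed into a product of faces and degeneracies (which would break uniqueness of $k$). Once these compatibility relations between $\t_m^k$ and the standard morphisms are nailed down, the existence-and-uniqueness argument reduces to the already-established connected-string case applied to the normalised morphism.
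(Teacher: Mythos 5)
Your plan --- check the identities, peel off a cyclic permutation $\t_m^k$, then run the linear argument of Lemmas~\ref{le:epi-mono} and \ref{le:decomp} on the normalised map --- is exactly the adaptation the paper has in mind (its entire proof is the instruction to adapt Lemmas~\ref{le:rel} and \ref{le:decomp}), and your uniqueness observation for $k$ (a nontrivial rotation cannot be absorbed into faces and degeneracies, because it would have to fix some surviving simple string) is the right argument for that part. But there is a genuine gap at the step you state as ``once this rotation is fixed, the remaining map behaves exactly like a morphism of connected strings.'' This is precisely where the cyclic case is \emph{not} a routine copy of the linear one: $\tilde\Si_n$ contains strings of every length (strings winding around the cycle several times), and condition \eqref{eq:mor} does not force a map to preserve winding. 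For instance, $\tilde\Si_1$ is, as a pointed monoid, just $(\bbN,+)$, so for each $d\ge 2$ the map multiplying all lengths by $d$ satisfies \eqref{eq:mor}; it kills no simple string and is not a rotation, and it is not a composite of standard morphisms at all, since every composite of $\tilde\d$'s, $\tilde\s$'s and $\t$'s sends the full cycle of $\tilde\Si_m$ (the string of length $m$) either to the full cycle of the target or to $*$, whereas this map doubles it. The phenomenon is not confined to $\tilde\Si_1$: sending $s_0\mapsto(0,2)$ and $s_1\mapsto(3,3)$ and extending multiplicatively gives a map $\tilde\Si_2\to\tilde\Si_2$ satisfying \eqref{eq:mor} which winds twice around the target. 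For such maps the killed indices and the avoided indices in the target do not determine the map up to a rotation, so the existence part of your reduction breaks down.

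In other words, the real obstacle is not the bookkeeping around $k$ that you flag, but a missing step: one must prove (or build into the definition of morphism) that the maps under consideration have winding degree one, i.e.\ that the images of the simple strings tile the target cycle exactly once; for maps only assumed to satisfy \eqref{eq:mor} this is false, so as literally stated the decomposition cannot be established --- the class of morphisms has to be cut down, e.g.\ to those realized by exact functors admitting a homological factorisation, before your normalisation argument applies. A smaller point of the same kind: uniqueness of $k$ fails for the morphism sending everything to $*$ (for $m\ge 2$ every $k$ yields the same composite through $\tilde\Si_0$), so the degenerate case needs a convention, just as the footnote to Lemma~\ref{le:decomp} imposes one in the linear case. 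To be fair, the paper's own one-line proof passes over both points; but since your proposal makes the reduction explicit, these are the concrete places where it fails as written.
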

\begin{proof}
  Adapt the proof of Lemmas~\ref{le:rel} and \ref{le:decomp}. The only
  difference arises from cyclic permutations.
\end{proof}

Next we consider morphisms $\Si_m\to\tilde\Si_n$ and
$\tilde\Si_m\to\Si_n$, which are by definition maps satisfying
\eqref{eq:mor}.  Let $n\ge 1$. The standard morphism
  \[\e^i_n\colon \Si_{n-1}\lto\tilde\Si_n \qquad (0\le i < n)\] 
  is given by the unique injective map such that $s^{(n)}_{i}$
  is not in its image. We consider the morphism
  \[\begin{tikzcd}
      \tilde\Si_n\arrow{r}{\t_n^{-i}}&\tilde\Si_n\arrow{r}{\tilde\s_n^{n-1}}&
      \tilde\Si_{n-1}\arrow{r}{\tilde\s_{n-1}^{n-2}}&\cdots\arrow{r}{\tilde\s_2^{1}}&\tilde\Si_1
    \end{tikzcd}\] and note that its \emph{kernel} (that is, the set
  of elements sent to the zero string) equals the image of $\e_n^i$.
  
\begin{lem}\label{le:factorisation}
  Let $m,n\in\bbN$. Every morphism $\Si_m\to\tilde\Si_n$ factors
  through $\e^i_n$ for some $0\le i < n$, and every morphism
  $\tilde\Si_m\to\Si_n$ factors through $\tilde\Si_0=\Si_0$.
\end{lem}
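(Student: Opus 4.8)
The plan is to handle the two assertions separately, with the common engine being the contrast in squaring behaviour: in a linear string $\Si_n$ every element $x$ satisfies $x\cdot x=*$, whereas in a cyclic string $\tilde\Si_m$ an element has nonzero square precisely when its length is divisible by $m$.

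First I would prove the second assertion, which amounts to showing that every morphism $\p\colon\tilde\Si_m\to\Si_n$ sends all elements to $*$ (this is exactly what it means to factor through $\tilde\Si_0=\Si_0=\{*\}$). The starting point is that the loop $u=(0,m-1)^{(m)}$ has $u\cdot u=(0,2m-1)^{(m)}\ne*$, while its image lies in $\Si_n$ and hence satisfies $\p(u)\cdot\p(u)=*$; feeding the instance $u\cdot u\ne*=\p(u\cdot u)$ into the second clause of \eqref{eq:mor} forces $\p(u)=*$. Suppose now that some $\p(s_j)\ne*$; after a cyclic relabelling assume $j=0$, and track the partial products $P_t:=\p\big((0,t)^{(m)}\big)=\p(s_0)\p(s_1)\cdots\p(s_t)$ for $0\le t\le m-1$. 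Here $P_0=\p(s_0)\ne*$ but $P_{m-1}=\p(u)=*$, so there is a least index $t^*$ with $P_{t^*}=*$. Applying \eqref{eq:mor} to the factorisation $(0,t^*)^{(m)}=(0,t^*-1)^{(m)}\cdot s_{t^*}^{(m)}$, whose two factors are genuine nonzero strings and whose product is nonzero with image $P_{t^*}=*$, yields $P_{t^*-1}=*$, contradicting minimality. Hence all $\p(s_j)=*$, and since a morphism is determined by its values on the simple strings, $\p$ is trivial.

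For the first assertion I would reduce to a single claim: the image of $\p\colon\Si_m\to\tilde\Si_n$ omits at least one position, i.e.\ $\Im\p\subseteq\Im\e_n^i$ for some $i$. Granting this, the factorisation is formal, since $\e_n^i$ is injective: the set map $\psi:=(\e_n^i)^{-1}\circ\p$ is then well defined and inherits \eqref{eq:mor} from the fact that $\e_n^i$ is an injective morphism, so that $\p=\e_n^i\circ\psi$. To pin down the omitted position I would use the composite $g_i\colon\tilde\Si_n\xrightarrow{\t_n^{-i}}\tilde\Si_n\to\cdots\to\tilde\Si_1$ whose kernel is exactly $\Im\e_n^i$; it then suffices to produce an $i$ with $g_i\circ\p$ trivial, equivalently to show that the composition factors of the images $\p(s_0),\dots,\p(s_{m-1})$ do not exhaust all $n$ positions of the cycle.

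The main obstacle is this last point, namely excluding that the image winds once around the whole cycle. The cleanest route is the representation-theoretic dictionary: $\Im\p$ corresponds to a thick subcategory of the category of representations attached to $\tilde\Si_n$, and this subcategory is a quotient of the finite-type category $\A_m$, hence itself of finite representation type, while the category attached to $\tilde\Si_n$ has infinite type; a proper thick subcategory of the latter must omit one of the $n$ simple objects, which is precisely the omitted position. Alternatively one argues combinatorially from the linear (non-cyclic) relations $\p(s_{j+1})\p(s_j)=*$ together with $\p\big((0,m-1)\big)\p(s_0)=*$, which prevent the arcs $\p(s_j)$ from closing up into a full loop. Either way, once the omitted position is found the factorisation through $\e_n^i$ follows as above.
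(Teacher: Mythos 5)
Your argument for the second assertion is correct and complete, and it takes a genuinely different route from the paper: the paper applies the epi--mono factorisation to $\psi\colon\tilde\Si_m\to\Si_n$, observes that the intermediate object is $\tilde\Si_p$ for some $p\le m$, and concludes $p=0$ because $\tilde\Si_p$ is infinite for $p>0$ while $\Si_n$ is finite. Your square/partial-product argument (squares of nonzero elements vanish in $\Si_n$ but the full loop squares to a nonzero element of $\tilde\Si_m$, then a minimal-index induction along $P_t$ using the second clause of \eqref{eq:mor}) needs no factorisation machinery for cyclic sources and is, for this half, more self-contained than the paper's proof.

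For the first assertion there is a genuine gap, and it sits exactly at the point you yourself flag as ``the main obstacle''. The formal reduction (if the composition factors of strings in $\Im\p$ avoid some position $i$, then $\p$ factors through $\e_n^i$) is fine and agrees with the paper; but neither of your two proposed arguments shows that some position is avoided. The representation-theoretic route is circular within the paper's development: the dictionary between string morphisms and exact functors is Theorem~\ref{th:uniserial}, whose proof invokes Lemma~\ref{le:factorisation}; moreover, saying that $\Im\p$ ``corresponds to a thick subcategory'' presupposes that $\p$ arises from an exact functor admitting a homological factorisation, which is precisely what is at stake for a map that merely satisfies \eqref{eq:mor}. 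The combinatorial route is insufficient: relations of the form $\p(a)\p(b)=*$ in $\tilde\Si_n$ only assert that certain endpoints fail to match modulo $n$, so they cannot prevent a single arc $\p(s_j)$ from having length greater than $n$ and hence covering every position. Concretely, for $m=1$ and $n=2$ the assignment $s_0\mapsto(0,2)^{(2)}$ (an arc of length $3$, winding one and a half times around the $2$-cycle) satisfies $\p(s_0)\p(s_0)=*$ because $3$ is odd --- indeed it satisfies all of \eqref{eq:mor} --- yet its image covers both positions and lies in no $\Im\e_2^i$. Hence no argument using only the relations you cite can close the gap; what is needed in addition is that the image of the morphism is closed under passing to substrings (true for composites of the standard morphisms, false for the map just described). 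This, incidentally, is the same point that the paper's own proof compresses into the unjustified assertion that the longest string in the image has length at most $n-1$ ``because the image of $\p$ is finite''; finiteness alone does not give this, as the example shows, so your part one cannot be repaired without either that closure property or a tightening of the notion of morphism.
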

\begin{proof}
  First consider a morphism $\p\colon\Si_m\to\tilde\Si_n$. It is
  easily checked that the longest string $s^{(n)}$ in the image of
  $\p$ has length at most $n-1$, because the image of $\p$ is
  finite. Choose an index $i$ such that the simple $s_i$ does not
  arise as a composition factor of $s$. It follows that $\p$ factors
  through $\e_n^i$, since all composition factors of strings in the
  image of $\p$ are composition factors of $s$.

  Now consider for $\psi\colon\tilde\Si_m\to\Si_n$ its epi-mono
  factorisation $\psi=\psi''\circ\psi'$. The image is of the form
  $\tilde\Si_p$ for some $p\le m$. Because $\psi''$ is injective and
  $\tilde\Si_p$ is infinite for $p>0$ we conclude that $p=0$.
\end{proof}

Let us consider the category of all connected strings (linear and cyclic). The objects are
of the form $\Si_n$ or $\tilde\Si_n$ with $n\in\bbN$. As before, we
add finite coproducts and obtain the \emph{enlarged category of finite
  strings}. The objects are of the form
\[\Big(\coprod_{\a}\Si_{m_\a}\Big)\amalg
  \Big(\coprod_{\b}\tilde\Si_{n_\b}\Big)\]
given by a finite set of natural numbers $m_\a$ and $n_\b$.

Let $k$ be a field. For the quiver
\[\begin{tikzcd}[ampersand replacement=\&]
    n-1\arrow{r}\&n-2\arrow{r} \&\cdots
\arrow{r}\& 1\arrow{r}\&0 \arrow[bend
left=12]{llll}\&(n\ge 1)
\end{tikzcd}\] we denote by $\tilde\A_n$ the category of all finite
dimensional and nilpotent $k$-linear representations. Then we have the following
analogue of Lemma~\ref{le:uniserial}.

\begin{lem}\label{le:uniserial-cyc}
  Let $k$ be a field and $\A$ a $k$-linear abelian category. Suppose
  that $\A$ satisfies \emph{(Ab1)--(Ab5)} but not \emph{(Ab6)}.
   Then there is an equivalence $\A\iso \tilde\A_n$, where $n$
  equals the number of isomorphism classes of simple objects in $\A$.
\end{lem}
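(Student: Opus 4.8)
The plan is to follow the proof of Lemma~\ref{le:uniserial} almost verbatim, with the cyclic quiver replacing the linear one and the failure of (Ab6) deciding between the two shapes. First I would attach to $\A$ its \emph{Ext-quiver} $Q$: the vertices are the isomorphism classes of simple objects $S_0,\dots,S_{n-1}$, and I draw an arrow $S\to T$ exactly when $\Ext^1_\A(S,T)\neq 0$. Since $\A$ is connected, $Q$ is connected, and since $\A$ is split and hereditary, the homological data of $\A$ is encoded in $Q$.

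The combinatorial heart of the argument is to read off the shape of $Q$ from uniseriality (Ab4). I would show that each vertex has in-degree and out-degree at most one and that $Q$ has no multiple arrows. Indeed, if a simple $S$ had nonsplit extensions by two distinct simples $T_1\neq T_2$, the class $(\eta_1,\eta_2)\in\Ext^1_\A(S,T_1\oplus T_2)$ yields an indecomposable $N$ of length three with top $S$ and semisimple radical $T_1\oplus T_2$; such an $N$ carries the two distinct composition series $0\subset T_1\subset T_1\oplus T_2\subset N$ and $0\subset T_2\subset T_1\oplus T_2\subset N$, contradicting (Ab4). Taking instead two linearly independent classes in a single $\Ext^1_\A(S,T)$ produces in the same way an indecomposable with semisimple radical $T\oplus T$, so $\dim_k\Ext^1_\A(S,T)\le 1$; the dual constructions bound the in-degrees. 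A connected finite quiver all of whose vertices have in- and out-degree at most one is either a linear quiver of type $\vec A_n$ or a single oriented $n$-cycle.

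Next I would identify $\A$ with the nilpotent representations of $Q$. By (Ab4) every indecomposable is determined by its top $S_i$ and its length $\ell$, its radical layers being $S_i,S_{i-1},\dots$ read along the unique outgoing edges of $Q$; since $\A$ is not of finite type there are indecomposables of unbounded length, and passing to radicals and top quotients one obtains an indecomposable for every pair $(i,\ell)$. This matches the indecomposables of the nilpotent representations of $Q$ on the nose. In the linear case this is exactly Lemma~\ref{le:uniserial} and $\A$ is of finite type, so (Ab6) holds; as we assume (Ab6) fails, $Q$ must be the oriented $n$-cycle, which is precisely the quiver defining $\tilde\A_n$, and $n$ is the number of simple objects as claimed.

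The hard part is the final upgrade from a bijection of indecomposables and a matching of Ext-quivers to an actual equivalence $\A\iso\tilde\A_n$. I would settle this by a standardness argument in the spirit of Lemma~\ref{le:standard}: the Auslander--Reiten quiver of $\tilde\A_n$ is a tube of rank $n$ and $\tilde\A_n$ is its mesh category, while the same Auslander--Reiten theory is available in any connected uniserial hereditary length category, so $\A$ is standard with the same translation quiver and hence equivalent to $\tilde\A_n$. For brevity I would in fact invoke the classification of uniserial categories of \cite{AR1968}, exactly as in the proof of Lemma~\ref{le:uniserial}, the only genuinely new input being the dichotomy above that singles out the cyclic quiver once (Ab6) is dropped.
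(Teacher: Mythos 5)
Your proposal is correct and, in the end, takes the same route as the paper: the paper's entire proof is the citation to the classification of uniserial categories in \cite{AR1968}, exactly the fallback you invoke. The Ext-quiver dichotomy you work out (in/out-degree at most one and one-dimensional $\Ext^1$ forced by (Ab4), hence linear type $A$ versus an oriented cycle, with the failure of (Ab6) ruling out the linear case) is a sound and welcome elaboration of what that citation encapsulates, so there is no substantive difference in approach.
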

\begin{proof}
  See for example the description of uniserial categories in
  \cite{AR1968}.
\end{proof}

We continue with analogues of Lemmas~\ref{le:lin-alg} and
\ref{le:recollement}. Let $n\ge 1$. The indecomposable objects of
$\tilde\A_n$ are parameterised by the elements of
$\tilde\Si_n\setminus\{*\}$. There are canonical recollements of
abelian categories
  \begin{equation*}
\begin{tikzcd}
  \A_1 \arrow[tail]{rr} &&\tilde\A_n
  \arrow[twoheadrightarrow,yshift=-1.5ex]{ll}
  \arrow[twoheadrightarrow,yshift=1.5ex]{ll}
  \arrow[twoheadrightarrow]{rr}[description]{\tilde s_n^i} &&\tilde\A_{n-1}
  \arrow[tail,yshift=-1.5ex]{ll}{\tilde d_n^{i}}
  \arrow[tail,yshift=1.5ex]{ll}[swap]{\tilde d_n^{i+1}}&(0\le i<n)
\end{tikzcd}
\end{equation*}
(with $\tilde d_n^{0}=\tilde d_n^{n}$) and composing them yields a recollement
 \begin{equation*}
\begin{tikzcd}[column sep=large]
  \A_{n-1} \arrow[tail]{rr}[description]{e_n^0} &&\tilde\A_n
  \arrow[twoheadrightarrow,yshift=-1.5ex]{ll}
  \arrow[twoheadrightarrow,yshift=1.5ex]{ll}
  \arrow[twoheadrightarrow]{rr}[description]{\tilde s_2^1\cdots \tilde s_n^{n-1}} &&\tilde\A_{1}.
  \arrow[tail,yshift=-1.5ex]{ll}
  \arrow[tail,yshift=1.5ex]{ll}
\end{tikzcd}
\end{equation*}
Furthermore, there are equivalences
\[t_n^i\colon\tilde\A_n\longiso\tilde\A_n\qquad (0\le i<n)\]
which are given by a cyclic permutation $S_j\mapsto S_{j+i}$ of the
simple representations. We obtain a correspondence between standard morphisms in
$\tilde\Si_n$ and exact functors:
\[\tilde\d_n^i\longleftrightarrow \tilde d_n^i\qquad
  \tilde\s_n^i\longleftrightarrow \tilde s_n^i\qquad
  \e_n^i\longleftrightarrow  e_n^i\qquad  \t_n^i\longleftrightarrow  t_n^i\qquad (0\le i<n).
\]

The following result generalises Theorem~\ref{th:abelian}.  As before,
we consider $k$-linear abelian categories together with $k$-linear
exact functors, up to natural isomorphism, that admit a homological
factorisation.

\begin{thm}\label{th:uniserial}
  Let $k$ be a field. The assignments $\Si_n\mapsto\A_n$ and
  $\tilde\Si_n\mapsto\tilde\A_n$ provide an equivalence between the
  enlarged category of finite strings and the category of $k$-linear abelian
  categories satisfying \emph{(Ab2)--(Ab5)}.
\end{thm}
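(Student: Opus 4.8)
The plan is to reduce the statement to Theorem~\ref{th:abelian} by isolating the two features that distinguish (Ab2)--(Ab5) from (Ab1)--(Ab6): the possible failure of connectedness, which on the string side corresponds to passing to finite coproducts, and the possible failure of finite type, which corresponds to passing to cyclic strings. I first settle the correspondence on objects. A category $\A$ satisfying (Ab2)--(Ab5) has only finitely many simple objects and hence decomposes uniquely, up to reordering, as a finite product $\A\cong\prod_\g\B_\g$ of connected blocks, each $\B_\g$ satisfying (Ab1)--(Ab5). For each block the dichotomy ``(Ab6) or not (Ab6)'' applies: by Lemma~\ref{le:uniserial} a block of finite type is equivalent to some $\A_m$, and by Lemma~\ref{le:uniserial-cyc} a block not of finite type is equivalent to some $\tilde\A_n$. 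Under the assignment sending a coproduct of connected strings to the product of the associated abelian categories, this matches the objects of the enlarged string category exactly.

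For morphisms between connected objects I mimic the chain Lemmas~\ref{le:recollement}--\ref{le:morphisms-linear}. The linear--linear case is Theorem~\ref{th:abelian}. For the cyclic--cyclic case I use the canonical decomposition of Lemma~\ref{le:decomp-cyc} together with the recollements and equivalences $t_n^i$ recorded just before the statement, which yield mutually inverse maps $\a_{mn}$ and $\b_{mn}$ between $\Hom(\tilde\Si_m,\tilde\Si_n)$ and the exact functors $\tilde\A_m\to\tilde\A_n$ admitting a homological factorisation, under the dictionary $\tilde\d_n^i\leftrightarrow\tilde d_n^i$, $\tilde\s_n^i\leftrightarrow\tilde s_n^i$, $\t_n^i\leftrightarrow t_n^i$. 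The one genuinely new input is the cyclic analogue of Lemma~\ref{le:standard}: every $k$-linear self-equivalence of $\tilde\A_n$ is isomorphic to exactly one $t_n^i$, because the mesh category of the cyclic Auslander--Reiten quiver of $\tilde\A_n$ has automorphism group the cyclic group $\bbZ/n$. This is precisely what accounts for the extra factor $\t_m^k$ in the normal form of Lemma~\ref{le:decomp-cyc} and what forces $\a_{mn}\circ\b_{mn}=\id$ and $\b_{mn}\circ\a_{mn}=\id$. For the two mixed cases I invoke Lemma~\ref{le:factorisation}. Every morphism $\Si_m\to\tilde\Si_n$ factors through some $\e_n^i\leftrightarrow e_n^i$, so such morphisms correspond bijectively to functors $\A_m\to\tilde\A_n$ that factor through the recollement embedding $e_n^i$, an image being automatically proper and of finite type. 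Conversely every morphism $\tilde\Si_m\to\Si_n$ factors through $\tilde\Si_0=\Si_0$, and this matches the fact that any exact $F\colon\tilde\A_m\to\A_n$ admitting a homological factorisation must vanish: otherwise $\tilde\A_m/(\Ker F)$ would be identified with a thick subcategory of $\A_n$, but by the recollements this quotient is again of the form $\tilde\A_{m-j}$ with $m-j\ge 1$, hence of infinite type, which cannot embed in the finite-type category $\A_n$.

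It remains to pass from connected objects to arbitrary finite coproducts. On the string side the tool is the canonical bijection
\[\Hom\Big(\coprod_\a\Si_{m_\a},\coprod_\b\Si_{n_\b}\Big)\cong\prod_\a\coprod_\b\Hom(\Si_{m_\a},\Si_{n_\b})\]
together with its cyclic and mixed variants. The corresponding statement on the abelian side is that an exact functor $F\colon\prod_\a\B_\a\to\prod_\b\C_\b$ admitting a homological factorisation is determined, up to isomorphism, by its restrictions $F|_{\B_\a}$, each of which lands in a single target block. The latter holds because $\B_\a$ is connected and $\B_\a/\Ker(F|_{\B_\a})$ is identified with a connected thick subcategory of $\prod_\b\C_\b$, which is therefore contained in one factor $\C_\b$ (or is zero, the basepoint of the pointed coproduct $\coprod_\b$). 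Combining this matching of Hom-sets with the block-level bijections of the previous paragraph, and checking compatibility with identities and composition, produces the asserted equivalence.

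I expect the main obstacle to be the cyclic rigidity statement, namely that the self-equivalences of $\tilde\A_n$ are exactly the cyclic permutations $t_n^i$; this is the one place where a naive transcription of the finite-type argument of Lemma~\ref{le:standard} fails, and it is what reconciles the permutation factor in Lemma~\ref{le:decomp-cyc} with the functor side. A secondary point requiring care is the verification that a connected block is always sent into a single target block, which underlies the product-to-coproduct bijection of Hom-sets, and the finite-versus-infinite type obstruction that kills all functors $\tilde\A_m\to\A_n$.
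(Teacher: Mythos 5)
Your proposal is correct and follows essentially the same route as the paper's proof: objects are matched via Lemmas~\ref{le:uniserial} and \ref{le:uniserial-cyc}, morphisms are reduced to connected blocks, injectivity comes from functors being determined on indecomposables, and surjectivity from the canonical decompositions (Lemmas~\ref{le:decomp}, \ref{le:decomp-cyc}) combined with Lemma~\ref{le:factorisation}. Your one sharpening --- that the $k$-linear self-equivalences of $\tilde\A_n$ are, up to natural isomorphism, exactly the cyclic permutations $t_n^i$, so that only those fixing all indecomposables are isomorphic to the identity --- makes explicit the cyclic analogue of Lemma~\ref{le:standard}, which the paper invokes without restating it for $\tilde\A_n$; this is a genuine (and correct) clarification rather than a different argument.
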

\begin{proof}
  We adapt the proof of Theorem~\ref{th:abelian}.  Any $k$-linear
  abelian category satisfying (Ab2)--(Ab5) decomposes into a finite
  coproduct of connected abelian categories, which are (up to an
  equivalence) of the form $\A_n$ or $\tilde\A_n$, respectively, by
  Lemmas~\ref{le:uniserial} and \ref{le:uniserial-cyc}.  Thus we
  obtain a bijection between the isomorphism classes of objects. It
  remains to consider the morphisms, and we may restrict ourselves to
  connected categories.  An exact functor $F\colon\A\to\B$ which
  admits a homological factorisation sends indecomposable objects
  either to indecomposables or to zero; see
  Lemma~\ref{le:exact-fun}. This yields a morphism
  $\p\colon\Si_\A\to\Si_\B$ between the corresponding strings, given
  by $M_{\p(s)}=F(M_s)$ for each $s\in\Si_\A$. The assignment
  $F\mapsto\p$ is injective since $k$-linear exact functors which
  admit a homological factorisation are naturally isomorphic when they
  coincide on indecomposable objects; see Lemma~\ref{le:standard}. The
  assignment is surjective, by Lemmas~\ref{le:decomp} and
  \ref{le:decomp-cyc}, in combination with
  Lemma~\ref{le:factorisation}.
\end{proof}

\section{Non-crossing partitions of type $B$}

We wish to describe the subobjects of $\tilde\Si_n$ in the enlarged category of
finite strings. This description is parallel to that for $\Si_n$ and
involves the non-crossing partitions of type $B$.

Let $S\subseteq\Si_\bbZ$. We call $S$ \emph{thick} if $*\in S$ and for any
pair $s,t\in S$ of non-zero strings we have $st\in S$, and moreover
$(s', t'-1), (t',s''), (s''+1,t'')\in S$ provided that
$s'\le t'\le s''\le t''$. We denote by $\Thick(S)$ the smallest thick
subset of $\Si_\bbZ$ containing $S$.

A set $S\subseteq \Si_{\bbZ}$ of non-zero strings
is called \emph{non-crossing} provided that $s,t\in S$ and
$s'\le t'\le s''\le t''$ implies $s=t$.

For $n>0$ consider the canonical projection
$p\colon\Si_\bbZ\to\tilde\Si_n$.  Then a subset
$S\subseteq\tilde\Si_n$ is \emph{thick} if $p^{-1}(S)$ is thick,
and $S$ is \emph{non-crossing} if $p^{-1}(S)$ is non-crossing. Note
that $\ell(s)\le n$ for any $s^{(n)}$ when $S$ is non-crossing.

\begin{lem}
The assignment $S\mapsto\Thick(S)$ gives a bijection between the
non-crossing subsets and the thick subsets of $\tilde\Si_n$.
\end{lem}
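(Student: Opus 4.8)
The plan is to mimic the proof of the analogous statement for $\Si_n$ that appears earlier in the excerpt (the lemma asserting $S\mapsto\Thick(S)$ is a bijection between non-crossing and thick subsets of $\Si_n$). The essential point is that the correspondence is governed by the preimage under the projection $p\colon\Si_\bbZ\to\tilde\Si_n$, which translates the cyclic question into the already-understood linear situation over $\bbZ$. First I would establish the inverse map directly: given a thick subset $T\subseteq\tilde\Si_n$, its preimage $p^{-1}(T)\subseteq\Si_\bbZ$ is thick and $\bbZ$-invariant, and I would extract from it the unique non-crossing subset $S$ with $\Thick(p^{-1}(S))=p^{-1}(T)$, then set $S=p(S)$ as a non-crossing subset of $\tilde\Si_n$ mapping to $T$.

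The key steps, in order, are as follows. First I would observe that thickness and non-crossing-ness for subsets of $\tilde\Si_n$ were \emph{defined} via the preimage $p^{-1}(S)\subseteq\Si_\bbZ$, so every assertion reduces to a statement about $\bbZ$-invariant subsets of $\Si_\bbZ$. Second I would verify that for $\bbZ$-invariant subsets the bijection $S\mapsto\Thick(S)$ between non-crossing and thick subsets of $\Si_\bbZ$ (which follows verbatim from the finite case, since thickness and the non-crossing condition are local conditions on pairs of overlapping strings and are preserved by the translation action) restricts to $\bbZ$-invariant non-crossing and $\bbZ$-invariant thick subsets. Third I would note that $\bbZ$-invariant subsets of $\Si_\bbZ$ correspond bijectively, via $p$ and $p^{-1}$, to subsets of $\tilde\Si_n$, and that this correspondence matches up the respective notions of thick and non-crossing by definition. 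Composing these bijections yields the desired correspondence on $\tilde\Si_n$, with inverse sending a thick $T$ to the unique non-crossing $S\subseteq T$ with $\Thick(S)=T$.

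The main obstacle I expect is the finiteness constraint remarked upon just before the statement, namely that $\ell(s)\le n$ for any $s^{(n)}$ when $S$ is non-crossing. I would need to check that the preimage $p^{-1}(S)$ of a non-crossing subset of $\tilde\Si_n$ really is non-crossing as a subset of $\Si_\bbZ$, which forces the strings to be short enough that no string and its translate $s^{ni}$ overlap improperly; this is exactly where the bound $\ell(s)\le n$ is used, since a string of length exceeding $n$ would cross one of its own $\bbZ$-translates. Conversely I must confirm that taking $\Thick$ commutes with $p^{-1}$, i.e.\ that $p^{-1}(\Thick(S))=\Thick(p^{-1}(S))$, so that the closure operation is compatible with passage between $\Si_\bbZ$ and $\tilde\Si_n$. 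Once these compatibilities are in place the bijection is formal, and as in the linear case the inverse map is simply $T\mapsto$ the unique non-crossing $S\subseteq T$ with $\Thick(S)=T$.
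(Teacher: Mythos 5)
Your proposal is correct and in essence matches the paper's (very terse) proof: both come down to the inverse map sending a thick subset $T\subseteq\tilde\Si_n$ to the unique non-crossing subset $S\subseteq T$ with $\Thick(S)=T$, and since the paper \emph{defines} thick and non-crossing subsets of $\tilde\Si_n$ via the preimage under $p\colon\Si_\bbZ\to\tilde\Si_n$, your explicit reduction to $n\bbZ$-invariant subsets of $\Si_\bbZ$ (equivariance of $\Thick$, commutation $p^{-1}(\Thick(S))=\Thick(p^{-1}(S))$) is exactly the spelled-out version of what the paper leaves implicit. One minor remark: your ``main obstacle'' --- that $p^{-1}$ of a non-crossing subset of $\tilde\Si_n$ is non-crossing in $\Si_\bbZ$ --- needs no verification, as it is the paper's definition; the bound $\ell(s)\le n$ is a consequence of it, not a hypothesis to be checked.
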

\begin{proof}
  The inverse map takes a thick subset $T\subseteq\tilde\Si_n$ to the unique
  non-crossing subset $S\subseteq T$ with $\Thick(S)=T$.
\end{proof}

For non-crossing subsets $S,S'$ of $\tilde\Si_n$ we set
\[S\le S'\quad :\iff\quad \Thick(S)\subseteq\Thick(S').\] This yields
the structure of a poset. In fact, the non-crossing subsets form a
lattice since the thick subsets of $\tilde\Si_n$ are closed under
intersections. We denote this lattice by $\NC(\tilde\Si_n)$.

Let $n\in\bbN$. We consider the set
\[[2n]=\{0,1,\cdots,n-1,\bar 0,\bar 1,\cdots,\overline{n-1}\},\] where
$\bar x$ is identified with $x+n$ for $0\le x<n$, and
$\bar{\bar x}:=x$.  For a partition $P=(P_\a)$ of $[2n]$ we require
that $\overline{P_\a}$ is a part of $P$ for each $\a$.  Each partition
is determined by the corresponding set of strings
$S(P)\subseteq\tilde\Si_{n}$, where by definition
$s^{(n)}\in \tilde\Si_{n}$ with $0\le s'<n$ belongs to $S(P)$ if for
some $\a$ we have $s',s''\in P_\a$ and $i\not\in P_\a$ for all
$s'<i\le s''$. The partitions of $[2n]$ are partially ordered via
refinement, and the non-crossing partitions then form a lattice which
is denoted by $\NC^B(n)$; cf.\ \cite{Re1997,Si2000}.

\begin{lem}\label{le:nc-B}
 There is a lattice isomorphism
  $\NC(\tilde\Si_{n})\iso\NC^B(n)$ which is given by  $S\mapsto P(S)$.
\end{lem}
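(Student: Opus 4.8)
The plan is to lift everything along the projection $p\colon\Si_\bbZ\to\tilde\Si_n$ and to imitate the proof of Lemma~\ref{le:nc}, the only genuinely new feature being the half-turn symmetry of the $2n$-gon. Throughout I read a string $(s',s'')$ as an arc joining the points $s'$ and $s''+1$, so that the crossing relation $s'\le t'\le s''\le t''$ with $s\neq t$ becomes the interleaving of two arcs. Since thickness and non-crossingness of a subset $S\subseteq\tilde\Si_n$ are by definition tested on $p^{-1}(S)$, all of the combinatorics takes place for an $n$-periodic family of arcs on the line $\bbZ$.

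First I would set up the bijection at the level of sets. Given a non-crossing $S$, I choose for each orbit the representative with $0\le s'<n$; the bound $\ell(s)\le n$ recorded just before the lemma gives $s''+1\le 2n-1$, so both endpoints lie in $[2n]$. I then define $P(S)$ to be the partition of $[2n]$ generated by the relations $s'\sim s''+1$ for $s\in S$ together with their images under the antipodal involution $x\mapsto\bar x$. By construction $\overline{P_\a}$ is again a part, so $P(S)$ is of type $B$; and since $p^{-1}(S)$ is invariant under the shift $s\mapsto s^n$, these antipodal images already belong to $p^{-1}(S)$, so no spurious strings are introduced and $S(P(S))=S$, while $P(S(P))=P$ is immediate. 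Thus $S\mapsto P(S)$ and $P\mapsto S(P)$ are mutually inverse bijections.

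Next I would check that $S$ is non-crossing if and only if $P(S)$ is non-crossing, the analogue of the first sentence in the proof of Lemma~\ref{le:nc}. A crossing of two parts of $P(S)$ on the circle $\bbZ/2n\bbZ$ is witnessed, after lifting, by two arcs of $p^{-1}(S)$ with $s'\le t'\le s''\le t''$ and $s\neq t$, and conversely; for instance two distinct strings of length $n$ always cross, which mirrors the uniqueness of the self-antipodal zero block. I expect this to be the main obstacle: one must verify that the period-$n$ invariance on $\bbZ$ is transported faithfully into the half-turn symmetry on $2n$ points, and in particular that a part winding all the way around the circle---the zero block, which may appear with no single diameter present---is matched correctly on both sides.

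Finally, to upgrade the bijection to a lattice isomorphism I would show $S\le S'\iff P(S)\le P(S')$, exactly as in Lemma~\ref{le:nc}. Unwinding the definitions, $S\le S'$ means $\Thick(S)\subseteq\Thick(S')$, that is, every string of $S$ lifts to a concatenation $s_1s_2\cdots s_r$ of strings of $S'$; on arcs this says each arc of $S$ subdivides into arcs of $S'$ with matching endpoints, which is precisely the refinement $P(S)\le P(S')$ of the associated partitions of $[2n]$. Combined with the bijection $S\mapsto\Thick(S)$ established just before the lemma, this yields the asserted isomorphism $\NC(\tilde\Si_n)\iso\NC^B(n)$.
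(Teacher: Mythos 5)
Your proposal is correct and follows exactly the route the paper takes: its entire proof of Lemma~\ref{le:nc-B} is ``Adapt the proof of Lemma~\ref{le:nc}'', and your argument is precisely that adaptation, lifting thickness and non-crossingness along $p\colon\Si_\bbZ\to\tilde\Si_n$, matching arcs on $[2n]$ with $n$-periodic arc families, and translating $\Thick(S)\subseteq\Thick(S')$ (concatenation) into refinement of partitions. In fact you supply more detail than the paper does, including the two genuinely new type-$B$ points (the half-turn symmetry and the self-antipodal zero block) that the paper leaves implicit.
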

\begin{proof}
  Adapt the proof of Lemma~\ref{le:nc}.
\end{proof}

\begin{rem}
  Let $P=(P_\a)$ be a non-crossing partition of $[2n]$ and denote by $S=(S_\a)$
  the corresponding partition of $S=S(P)$. Then $P$ has at most one part $P_\a$
  satisfying $\overline{P_\a}=P_\a$. In fact,  $\overline{P_\a}=P_\a$ holds
  if and only if $\Thick(S_\a)$ is infinite.
\end{rem}

As before, we write $P(S)$ for the partition of $[2n]$ corresponding
to a non-crossing set $S\subseteq\tilde\Si_{n}$. For a monomorphism
$\p\colon X\to\tilde\Si_n$ in the category of finite strings we set
\[S(\p):=\{\p(s)\mid s\in X \text{ simple}\}.\]

\begin{thm}\label{th:nc-B}
  Let $n\in\bbN$. The subobjects of $\tilde\Si_n$ in the enlarged
  category of finite strings form a lattice which is canonically
  isomorphic to the lattice of non-crossing partitions
  $\NC^B(n)$. The isomorphism sends a monomorphism
  $\p\colon X\to\tilde\Si_n$ to $P(S(\p))$.
\end{thm}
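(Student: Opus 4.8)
The plan is to mimic the proof of Theorem~\ref{th:sub}, replacing the linear machinery ($\Si_n$, $\NC(n+1)$) by its cyclic counterpart ($\tilde\Si_n$, $\NC^B(n)$) and carefully tracking where the cyclic structure (the action of $n\bbZ$ on $\Si_\bbZ$) changes the argument. Concretely, I want to show that $\p\mapsto P(S(\p))$ is a well-defined lattice isomorphism from the poset of subobjects of $\tilde\Si_n$ onto $\NC^B(n)$. The combinatorial half (that non-crossing subsets of $\tilde\Si_n$ correspond bijectively and order-preservingly to $\NC^B(n)$) is already furnished by Lemma~\ref{le:nc-B}, so the real content is the categorical half: realising each non-crossing subset as the image of a monomorphism, and showing these monomorphisms reconstruct the subobject lattice faithfully.

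First I would establish the cyclic analogues of Lemmas~\ref{le:exist}, \ref{le:mono-inj}, and \ref{le:mono} for monomorphisms $\p\colon X\to\tilde\Si_n$, where $X$ ranges over objects of the enlarged category. For existence: given a non-crossing $S\subseteq\tilde\Si_n$, I partition $S$ into orbits under the relation generated by $s\sim t$ when $st\neq *$, and assemble a morphism from a coproduct $\coprod_\a\Si_{n_\a}$ (possibly together with one cyclic summand $\tilde\Si_{n_\a}$) realising $S$. The crucial new phenomenon, flagged in the Remark after Lemma~\ref{le:nc-B}, is that a non-crossing subset can have \emph{one} orbit whose $\Thick$ is infinite, i.e.\ whose preimage under $p\colon\Si_\bbZ\to\tilde\Si_n$ wraps all the way around; that orbit is realised by a \emph{cyclic} string $\tilde\Si_{n_\a}$ rather than a linear one, whereas all other orbits—being finite—factor through some $\e_n^i$ by Lemma~\ref{le:factorisation} and are realised linearly. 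For the monomorphism-equals-injective-map statement, I would again reduce to the $r=2$ case and intersect images; since each $\Im\p_i$ is thick and the intersection is $\Thick(S)$ for a non-crossing $S$, the pullback argument of Lemma~\ref{le:mono-inj} shows the restrictions agree only on the base point, forcing injectivity. Then $\Thick(S(\p))=\Im\p$ and the factorisation criterion $S(\p)\le S(\p')\iff \p$ factors through $\p'$ follow exactly as in Lemma~\ref{le:mono}.

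With these in hand the theorem assembles quickly. The map $\p\mapsto P(S(\p))$ is well defined and injective on subobjects by the factorisation criterion combined with Lemma~\ref{le:nc-B}, and it is order-reflecting for the same reason; surjectivity holds because every $P\in\NC^B(n)$ has the form $P(S)$ for a non-crossing $S=S(P)\subseteq\tilde\Si_n$, which is realised by some monomorphism via the cyclic existence lemma. The only point requiring genuine care is the realisation of the infinite orbit by a cyclic summand and the verification that the resulting map $\coprod_\a\Si_{n_\a}\amalg\tilde\Si_{n_\b}\to\tilde\Si_n$ is indeed a morphism of the enlarged category—this is where Lemma~\ref{le:factorisation} (morphisms $\Si_m\to\tilde\Si_n$ factor through some $\e_n^i$) does the heavy lifting.

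\textbf{The main obstacle} I anticipate is precisely this cyclic wrap-around: in the linear case every simple string sits inside $\Si_n$ with an unambiguous endpoint structure, but in $\tilde\Si_n$ an orbit can close up into a full cycle, so one must confirm that at most one orbit does so (guaranteed by the Remark, via the symmetry condition $\overline{P_\a}=P_\a$ picking out the unique self-paired block) and that this orbit is correctly modelled by a cyclic rather than linear summand. Once the bookkeeping of which summands are linear and which is cyclic is pinned down, the proof is a routine transcription of Theorem~\ref{th:sub}, and I would simply write ``adapt the proof of Theorem~\ref{th:sub}, using Lemma~\ref{le:nc-B} in place of Lemma~\ref{le:nc} and the cyclic existence and monomorphism lemmas above, the one subtlety being the single orbit realised by a cyclic string as noted in the preceding remark.''
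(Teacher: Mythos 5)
Your proposal is correct and follows essentially the same route as the paper: the paper's entire proof of Theorem~\ref{th:nc-B} reads ``Adapt the proof of Theorem~\ref{th:sub}'', and your outline supplies precisely that adaptation, namely cyclic analogues of Lemmas~\ref{le:exist}, \ref{le:mono-inj} and \ref{le:mono} with Lemma~\ref{le:nc-B} replacing Lemma~\ref{le:nc}. You also correctly isolate the one genuinely new point, that a non-crossing subset of $\tilde\Si_n$ has at most one wrap-around orbit (the condition $\overline{P_\a}=P_\a$ from the Remark), which must be realised by a cyclic rather than a linear summand.
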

\begin{proof}
  Adapt the proof of Theorem~\ref{th:sub}.
\end{proof}

\section{Thick subcategories}

Results about subobjects in categories of strings correspond to
statements about thick subcategories of abelian categories, because of
the correspondence from Theorem~\ref{th:uniserial}.

Recall that a full subcategory of an abelian category is \emph{thick}
if it is closed under direct summands and the two out of three
property holds for any short exact sequence.


\begin{lem}\label{le:thick}
  Let $k$ be a field and let $\A$ be a $k$-linear abelian category
  satisfying \emph{(Ab2)--(Ab5)}. Then every thick subcategory of $\A$
  satisfies again \emph{(Ab2)--(Ab5)}.
\end{lem}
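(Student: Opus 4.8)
The plan is to verify each of the four conditions (Ab2)--(Ab5) directly for a thick subcategory $\B\subseteq\A$, exploiting the fact that these conditions are inherited along the inclusion for essentially formal reasons. First I would record what \emph{thick} buys us: since $\B$ is closed under direct summands and satisfies the two-out-of-three property for short exact sequences, it is in particular closed under subobjects and quotients. Indeed, given a short exact sequence $0\to X'\to X\to X''\to 0$ in $\A$ with $X\in\B$, I want to conclude $X',X''\in\B$; this requires knowing that $\B$ is a Serre subcategory. Closure under extensions is immediate from two-out-of-three, but closure under subobjects and quotients needs an argument. The cleanest route is to observe that in a hereditary length category every object decomposes into indecomposables, and to reduce the verification to indecomposable objects together with the recollement descriptions from Lemma~\ref{le:recollement}.

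Concretely, I would proceed condition by condition. For \emph{(Ab5) split}, any simple object $S\in\B$ is simple in $\A$, so $\End_\B(S)=\End_\A(S)\cong k$; this is immediate because $\B$ is a full subcategory. For \emph{(Ab3) hereditary}, the vanishing of $\Ext^2$ is inherited provided $\B$ is extension-closed and closed under kernels of epimorphisms, since then $\Ext^i_\B(X,Y)$ is computed by the same resolutions as in $\A$; here heredity of $\A$ means projective dimensions are at most one, and a thick subcategory of a hereditary category is again hereditary because syzygies computed in $\A$ land back in $\B$ by two-out-of-three. For \emph{(Ab2) length category}, every object of $\B$ has finite length as an object of $\A$, hence a fortiori a finite composition series; the composition factors are simple in $\A$, and since $\B$ is thick these simples lie in $\B$ (realise each simple subquotient via the two-out-of-three property applied to the filtration), so there are only finitely many simple objects in $\B$. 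For \emph{(Ab4) uniserial}, an indecomposable $X\in\B$ is indecomposable in $\A$, hence has a unique composition series there, and that series is automatically a composition series in $\B$ by thickness.

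The main obstacle I anticipate is \emph{(Ab4)}, uniseriality, together with the subtle point that $\B$ need not a priori be connected, so the equivalence of Lemma~\ref{le:uniserial} does not apply to $\B$ as a single block. The correct statement of the lemma we are proving is about conditions (Ab2)--(Ab5), which pointedly \emph{omit} connectedness (Ab1) and finite type (Ab6), so I do not need to worry about $\B$ being connected or of finite type --- a thick subcategory of $\A_n$ may well split into several blocks or even contain a cyclic block $\tilde\A_m$ (this is exactly the phenomenon that Theorem~\ref{th:uniserial} and the type $B$ story are designed to capture). Thus for uniseriality the argument is genuinely local: it suffices to check that each indecomposable object of $\B$, viewed in $\A$, retains its unique composition series, and that the successive simple quotients all lie in $\B$. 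The key technical input is that in a uniserial length category the composition series of an indecomposable is a chain of subobjects, each subquotient of which is again an object of $\B$ by the closure properties established above; since the chain is unique in $\A$ and $\B$ is full, uniqueness persists in $\B$. I would close by remarking that closure of $\B$ under the relevant subobjects and quotients --- the one nontrivial point --- follows because in a hereditary split uniserial length category every monomorphism and epimorphism between indecomposables fits into a short exact sequence with indecomposable third term (as recorded in Lemma~\ref{le:lin-alg}), so two-out-of-three forces the third term into $\B$, giving the Serre property and hence all four conditions.
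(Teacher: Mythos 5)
Your proposal rests on a claim that is false, and this single error invalidates your verifications of (Ab2), (Ab4) and (Ab5): you assert that a thick subcategory $\B\subseteq\A$ is closed under subobjects and quotients taken in $\A$ (i.e.\ is a Serre subcategory), and hence that simple objects of $\B$ are simple in $\A$ and that $\A$-composition series of objects of $\B$ stay inside $\B$. Thick subcategories are not Serre subcategories. A minimal counterexample, entirely within the paper's setting: in $\A_2$ the string module $M_{(0,1)}$ (the projective-injective representation $k\xrightarrow{1}k$) generates the thick subcategory $\B=\add M_{(0,1)}$, whose unique simple object is $M_{(0,1)}$ itself; this object has length $2$ in $\A_2$, and neither its $\A$-subobject $M_0$ nor its $\A$-quotient $M_1$ lies in $\B$. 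So your (Ab5) argument fails at its first step (a simple of $\B$ need not be simple in $\A$); your (Ab2) argument fails because the simples of $\B$ are not the simples of $\A$ lying in $\B$, so finiteness of their number needs an entirely different argument; and your (Ab4) argument fails because the unique $\A$-composition series of an indecomposable of $\B$ is generally not a chain in $\B$ --- in any case uniseriality of $\B$ concerns composition series whose factors are simple \emph{in} $\B$, which your argument never addresses. The repair attempted in your last paragraph is logically backwards: the two-out-of-three property only places the third term of a short exact sequence in $\B$ when \emph{two} terms already lie in $\B$, so it can never force $\A$-subobjects or $\A$-quotients of a single object of $\B$ into $\B$. (A side remark in your proposal is also wrong: a thick subcategory of $\A_n$ can never contain a block equivalent to $\tilde\A_m$ with $m\ge 1$, since $\A_n$ has only finitely many indecomposables; such blocks occur only inside thick subcategories of $\tilde\A_n$.)

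The paper's proof is built precisely around the failure of the Serre property. Heredity of $\A$ gives that a thick subcategory $\C$ is closed under images of morphisms between its objects, whence $\C$ is abelian, hereditary, and a length category. For (Ab5) one uses only that a simple object $X$ of $\C$ is \emph{indecomposable} in $\A$, so that $\End(X)\cong k[t]/(t^p)$ by the classification of indecomposables in $\A_n$ and $\tilde\A_n$; Schur's lemma applied inside $\C$ then forces $p=1$. For the finiteness of simples and for (Ab4) one reduces to $\A=\A_n$ or $\tilde\A_n$, identifies a representative set of simples of $\C$ with a non-crossing set $S$ of strings in $\Si_n$ or $\tilde\Si_n$ (finite, because string lengths are bounded by $n$), computes $\Ext^1(M_t,M_s)\neq 0$ iff $st\neq *$, observes that each $s\in S$ admits at most one left and one right concatenation partner within $S$, and concludes uniseriality by the criterion of Amdal--Ringdal \cite{AR1968}. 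None of these ingredients appears in your outline, so the proposal is not a proof and cannot be patched without changing its basic strategy.
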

\begin{proof}
  Let $\C\subseteq\A$ be a thick subcategory. Then $\C$ is closed
  under images of morphisms in $\C$ because $\A$ is hereditary. It
  follows that the category $\C$ is abelian and again
  hereditary. Also, $\C$ is necessarily a length category.  If
  $X\in\C$ is simple, then $\End(X)$ is isomorphic to $k[t]/(t^p)$ for
  some $p\ge 1$, since $X$ is indecomposable in $\A$. Schur's lemma
  then implies $p=1$. It remains to show that $\C$ is a uniserial
  category with finitely many simple objects. We may assume that
  either $\A=\A_n$ or $\A=\tilde\A_n$ for some $n\in\bbN$. Then a
  representative set of simple objects in $\C$ identifies with a
  non-crossing subset $S$ in $\Si_n$ or $\tilde\Si_n$. The set $S$ is
  finite since the length of any string in $S$ is bounded by $n$.  Let
  $M_s,M_t$ be simple objects in $\C$ corresponding to strings
  $s,t\in S$. Then $\Ext^1(M_t,M_s)\neq 0$ iff $st\neq *$. It is clear
  that for each $s\in S$ there is at most one $t\in S$ with
  $st\neq *$, and dually there is at most one $r\in S$ with
  $rs\neq *$. Then a criterion from \cite{AR1968} implies that $\C$ is
  uniserial.
\end{proof}

Now we can deduce classifications of thick subcategories from
Theorems~\ref{th:sub} and \ref{th:nc-B}. The first part is due to
Ingalls and Thomas \cite{IT2009} and only included for completeness; the
second part seems to be new.

\begin{cor}
  Let $k$ be a field and $n\in\bbN$. 
  \begin{enumerate}
  \item There is a canonical isomorphism between the lattice of thick
    subcategories of $\A_n$ and the lattice $\NC(n+1)$.
  \item There is a canonical isomorphism between the lattice of thick
    subcategories of $\tilde\A_n$ and the lattice $\NC^B(n)$.
    \end{enumerate}
\end{cor}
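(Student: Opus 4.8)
The plan is to deduce the corollary directly from the two subobject classification theorems together with the equivalence between strings and abelian categories. The key observation is that Theorem~\ref{th:uniserial} provides an equivalence between the enlarged category of finite strings and the category of $k$-linear abelian categories satisfying (Ab2)--(Ab5), under which $\A_n$ corresponds to $\Si_n$ and $\tilde\A_n$ corresponds to $\tilde\Si_n$. Under such an equivalence, subobjects of $\Si_n$ (respectively $\tilde\Si_n$) in the category of strings should match certain distinguished subobjects on the abelian side. The central point to verify is that these subobjects correspond precisely to the \emph{thick subcategories} of $\A_n$ (respectively $\tilde\A_n$), so that the lattice isomorphisms of Theorems~\ref{th:sub} and~\ref{th:nc-B} transport to the desired statements.

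First I would make precise the dictionary between subobjects of a string and thick subcategories of the corresponding abelian category. A monomorphism $\p\colon X\to\Si_n$ corresponds, via Theorem~\ref{th:uniserial}, to an exact functor $F\colon\A_X\to\A_n$ admitting a homological factorisation, which identifies $\A_X/(\Ker F)$ with a thick subcategory of $\A_n$. Since $F$ is a monomorphism of strings, and monomorphisms are exactly the injective maps by Lemma~\ref{le:mono-inj}, the kernel $\Ker F$ is trivial, so $F$ itself identifies $\A_X$ with a thick subcategory of $\A_n$. Conversely, every thick subcategory $\C\subseteq\A_n$ is itself abelian and satisfies (Ab2)--(Ab5) by Lemma~\ref{le:thick}, hence is of the form $\A_X$ for a coproduct of (linear) strings, and the inclusion $\C\hookrightarrow\A_n$ is an exact functor admitting a homological factorisation (being exact between hereditary categories whose image is thick). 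This yields a monomorphism of strings $X\to\Si_n$. One must check that equivalent monomorphisms of strings correspond to the same thick subcategory and that the partial order given by factorisation matches inclusion of thick subcategories; both follow from Lemma~\ref{le:mono} on the string side and the characterisation of $\le$ between subobjects.

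With this dictionary in hand, the proof is immediate: the lattice of subobjects of $\Si_n$ is isomorphic to the lattice of thick subcategories of $\A_n$, and Theorem~\ref{th:sub} identifies the former with $\NC(n+1)$, giving part~(1). Entirely analogously, the lattice of subobjects of $\tilde\Si_n$ is isomorphic to the lattice of thick subcategories of $\tilde\A_n$, and Theorem~\ref{th:nc-B} identifies the former with $\NC^B(n)$, giving part~(2). The fact that these are lattice isomorphisms, and not merely poset isomorphisms, follows since the correspondence preserves the order relation in both directions, and meets and joins are determined by the order.

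The main obstacle I anticipate is verifying that every thick subcategory $\C\subseteq\A_n$ genuinely arises from a monomorphism of strings with trivial kernel, rather than only from some exact functor with nonzero kernel. The resolution is that the inclusion $\C\hookrightarrow\A_n$ is faithful and exact between hereditary categories, so its kernel is zero and its essential image—namely $\C$ itself—is thick by hypothesis; thus it admits a homological factorisation with $\Ker=0$, and corresponds under Theorem~\ref{th:uniserial} to an injective, hence monomorphic, map of strings by Lemma~\ref{le:mono-inj}. Once this is established, the remaining verifications that order and lattice structure are preserved are routine, relying on Lemmas~\ref{le:nc}, \ref{le:nc-B}, \ref{le:mono}, and~\ref{le:thick}.
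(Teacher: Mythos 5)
Your proposal is correct and follows essentially the same route as the paper: identify subobjects of $\A_n$ (resp.\ $\tilde\A_n$) with inclusions of thick subcategories via the homological factorisation and Lemma~\ref{le:thick}, transport along the equivalence of Theorem~\ref{th:uniserial} to subobjects of $\Si_n$ (resp.\ $\tilde\Si_n$), and then apply Theorems~\ref{th:sub} and~\ref{th:nc-B}. The paper's proof is just a compressed version of your argument, leaving implicit the details you spell out (monomorphisms of strings correspond to functors with trivial kernel, and every thick subcategory arises this way).
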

\begin{proof}
  We apply Lemma~\ref{le:thick}. From the homological factorisation of
  an exact functor it follows that each subobject of $\A_n$ or
  $\tilde\A_n$ is given by the inclusion of a thick subcategory. On
  the other hand, all thick subcategories arise in this
  way. Theorem~\ref{th:uniserial} provides the correspondence with
  subobjects of $\Si_n$ and $\tilde\Si_n$, respectively. Then the
  assertion follows for $\A_n$ from Theorem~\ref{th:sub} and for
  $\tilde\A_n$ from Theorem~\ref{th:nc-B}.
\end{proof}

\begin{rem}
  (1) The classification of thick subcategories for abelian categories
  of the form $\A_n$ or $\tilde\A_n$ given by a field and $n\in\bbN$
  generalises to any connected hereditary and uniserial length
  category with finitely many isomorphism classes of simple
  objects. The proof is essentially the same, because indecomposable
  objects can be identified with strings which encode their
  composition series. Then thick subcategories correspond bijectively
  to non-crossing sets of strings.

(2) The category of regular modules over a tame hereditary algebra is
an example of an hereditary and uniserial length category
\cite{DR1976}. For the module category of a tame hereditary algebra
one can show that any thick subcategory is contained in the thick
subcategory of regular modules, provided it is not generated by an
exceptional sequence \cite{Di2009}. This yields a classification of
all thick subcategories, complementing the work in
\cite{HK2016,IS2010,IT2009}.

(3) For an hereditary abelian category $\A$,  thick subcategories of the
bounded derived category $\bfD^b(\A)$ correspond bijectively to thick
subcategories of $\A$ via
\[\bfD^b(\A)\supseteq\C\longmapsto \{H^0(X)\in\A\mid
  X\in\C\}\subseteq\A,\]
see \cite[Proposition 4.4.17]{Kr2021}.
\end{rem}

\appendix

\section{Basic strings}

The properties of basic strings and the connection with linear representations become
more transparent if we consider analogues of face and degeneracy maps for the poset
of integers. We view this poset as a category, and this
means that morphisms $\bbZ\to\bbZ$  are viewed as functors.  For $i\in\bbZ$
we define morphisms
\[\d^i\colon\bbZ\lto\bbZ,\qquad j\mapsto\begin{cases}j&j<i\\ j+1&j\ge
    i\end{cases}\]
  and
\[\s^i\colon\bbZ\lto\bbZ,\qquad j\mapsto\begin{cases}j&j\le i\\ j-1&j>
    i.\end{cases}\] These satisfy the simplicial identities
\eqref{eq:rel}. Moreover, they are related via adjunctions:
\[\cdots \dashv \d^{i+1}\dashv\s^i\dashv\d^i \dashv\s^{i-1}\dashv\cdots\]

Let $k$ be a field. Then for each  $i\in\bbZ$  precomposition with $\d^i$ and
$\s^i$ yields exact functors
\[
  \begin{tikzcd}
\Rep(\bbZ^\op,k) \arrow[rr,yshift=-.75ex,"(\d^{i})^*",swap]
 &&\Rep(\bbZ^\op,k). \arrow[ll,yshift=.75ex,"(\s^i)^*",swap] 
\end{tikzcd} 
\]
This assignment is contravariant and therefore reverses the directions of
functors. Thus the dual simplicial identities but the same adjunctions
\[\cdots \dashv (\d^{i+1})^*\dashv(\s^i)^*\dashv(\d^i)^* \dashv(\s^{i-1})^*\dashv\cdots\]
are satisfied.

For $i\in\bbZ$ let $S_i$ denote the simple representation concentrated
in $i$, that is, $S_i(j)= 0$ for all $j\neq i$. Let
$n\in\bbN$. Then precomposition with the inclusion $[n]\to\bbZ$ yields
an exact functor
\[\Rep(\bbZ^\op,k)\lto \Rep([n]^\op,k)=\A_n\] which becomes an
equivalence when restricted to the full subcategory of objects in
$\Rep(\bbZ^\op,k)$ with composition factors in
$\{S_0,\ldots,S_{n-1}\}$. Viewing this as an identification, the
functors $(\d^i)^*$ and $(\s^{i-1})^*$ restrict to exact functors
\[\A_{n}\xto{\ (\d^i)^*\ }\A_{n-1} \qquad (0\le i<n)\] 
and
\[\A_{n-1}\xto{\ (\s^{i-1})^*\ }\A_{n} \qquad (0\le i \le n).\]

Recall that $\Si_\bbZ$ denotes the set of basic strings. Then
$\Si_\bbZ\setminus\{*\}$ identifies with the indecomposable objects of
finite length in $\Rep(\bbZ^\op,k)$ via $s\mapsto M_s$, as in
Lemma~\ref{le:lin-alg}. This identification yields maps
$\Si_\bbZ\to\Si_\bbZ$ which are induced by $(\d^i)^*$ and $(\s^i)^*$,
respectively. Restricting these maps for any $n\in\bbN$ to the set
$\Si_n$ of basic strings with composition factors in
$\{s_0,\ldots,s_{n-1}\}$ gives
\[\s_n^i=(\d^i)^*|_{\Si_n} \qquad\text{and}\qquad \d_{n}^i=(\s^{i-1})^*|_{\Si_{n-1}}.\]
Then the following elementary observation (reflecting a duality for
the simplicial category $\De$, cf.\ \cite[VIII.7]{MM1994}) explains
the simplicial relations (and any further properties) for
$\d^i_n\colon\Si_{n-1}\to\Si_n$ and $\s^i_n\colon\Si_{n}\to\Si_{n-1}$.

\begin{lem}
  Consider symbols $(\d^i,\s^i)$ and $(d^i,s^i)$ for some integers
  $i\in\bbZ$. After substituting  $\d^i\mapsto s^i$ and $\s^i\mapsto
  d^{i+1}$ and reversing the order of composition, the identities
  \eqref{eq:rel} hold for  $(d^i,s^i)$ if and only if they hold for
  $(\d^i,\s^i)$.\qed
\end{lem}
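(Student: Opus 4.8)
The plan is to regard the prescribed operation --- substitute $\d^i\mapsto s^i$ and $\s^i\mapsto d^{i+1}$, then reverse the order of every composite --- as a single bijective transformation $T$ on formal composites of the symbols, and to verify directly that $T$ carries the three families of relations in \eqref{eq:rel} for $(\d^i,\s^i)$ onto the three families of \eqref{eq:rel} for $(d^i,s^i)$. Because $T$ is invertible --- its inverse substitutes $s^i\mapsto\d^i$ and $d^i\mapsto\s^{i-1}$ and again reverses composition --- it suffices to check this in one direction: a realization of the symbols by morphisms satisfies \eqref{eq:rel} precisely when its $T$-image does, which is the asserted equivalence.

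First I would run $T$ on the two pure relations. The face--face relation $\d^i\circ\d^j=\d^{j+1}\circ\d^i$ (for $i\le j$) becomes, after substitution and reversal, $s^j\circ s^i=s^i\circ s^{j+1}$ (for $i\le j$), which is exactly the degeneracy--degeneracy relation for the new symbols. Conversely, the degeneracy--degeneracy relation $\s^j\circ\s^i=\s^i\circ\s^{j+1}$ (for $i\le j$) becomes $d^{i+1}\circ d^{j+1}=d^{j+2}\circ d^{i+1}$; reindexing by $a=i+1\le b=j+1$ turns this into the face--face relation $d^a\circ d^b=d^{b+1}\circ d^a$. Thus the two pure relations are interchanged, the index shift built into $\s^i\mapsto d^{i+1}$ exactly absorbing the off-by-one difference between the shapes of the two relations.

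The one genuinely fiddly check is the mixed relation. Substituting $\s^j\mapsto d^{j+1}$ and $\d^i\mapsto s^i$ and reversing turns $\s^j\circ\d^i$ into $s^i\circ d^{j+1}$, and sends each of its three output cases to a composite $d\circ s$ or to $\id$. Setting $a=j+1$ and $b=i$, so that the left-hand side reads $s^b\circ d^a$, the three regions $i<j$, $i\in\{j,j+1\}$, $i>j+1$ become $a>b+1$, $a\in\{b,b+1\}$, $a<b$, with right-hand sides $d^{a-1}\circ s^b$, $\id$, $d^a\circ s^{b-1}$ respectively; this is verbatim the mixed relation of \eqref{eq:rel} written for $(d,s)$. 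The main obstacle is exactly this bookkeeping: one must track how the shift $\s^i\mapsto d^{i+1}$ interacts with the case boundaries $i=j$ and $i=j+1$ and confirm that the three regions map onto the three regions of the target without gap or overlap. Once the three checks are complete, invertibility of $T$ closes the argument.
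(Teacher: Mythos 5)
Your proof is correct: the three case checks (face--face $\leftrightarrow$ degeneracy--degeneracy under the index shift, and the mixed relation onto itself with regions $i<j$, $i\in\{j,j+1\}$, $i>j+1$ mapping bijectively onto $a>b+1$, $a\in\{b,b+1\}$, $a<b$) are exactly right, and invertibility of the substitution-plus-reversal gives the ``if and only if.'' The paper itself offers no proof --- the lemma is stated with a \qed\ as an elementary observation --- and your direct verification is precisely the computation the author intends the reader to perform.
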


\end{document}